\documentclass[12pt,reqno] {amsart}%
\usepackage{amsfonts}
\usepackage{amsmath}
\usepackage{amssymb}
\usepackage{xypic,color}
\usepackage{graphicx}%
\usepackage{hyperref}
\usepackage{comment}
 \usepackage{mathrsfs}
\setcounter{MaxMatrixCols}{30}
\setlength{\hoffset}{-0.8in}
\setlength{\textwidth}{6.75in} 

\theoremstyle{plain}

\newtheorem{theorem}{Theorem}[section]

\newtheorem{corollary}[theorem]{Corollary}
\newtheorem{definition}[theorem]{Definition}
\newtheorem{lemma}[theorem]{Lemma}
\newtheorem{proposition}[theorem]{Proposition}

\def\bn{\begin{definition}}
\def\en{\end{definition}}
\def\ba{\begin{array}}
\def\ea{\end{array}}
\def\be{\begin{equation}}
\def\ee{\end{equation}}
\def\bd{\begin{description}}
\def\ed{\end{description}}
\def\bu{\begin{enumerate}}
\def\eu{\end{enumerate}}
\def\bi{\begin{itemize}}
\def\ei{\end{itemize}}




\def\<{\langle}
\def\>{\rangle}
\begin{document}

\begin{center}
\textbf{Thompson's semigroup and the first Hochschild cohomology}
\end{center}

\bigskip

\begin{center}
Linzhe Huang\footnote{Yau Mathematical Sciences Center, Tsinghua University, Beijing 100084, China. \\Email: huanglinzhe@mail.tsinghua.edu.cn}
\end{center}

\bigskip
{\bf Abstract.}
In this paper, we apply the theory of algebraic cohomology to study the amenability of Thompson's
group $\mathcal{F}$. We
introduce the notion of unique factorization semigroup which contains Thompson's semigroup $\mathcal{S}$ and the free semigroup $\mathcal{F}_n$ on $n$ generators ($\geq2$). 
 Let $\mathfrak{B}(\mathcal{S})$ and $\mathfrak{B}(\mathcal{F}_n)$ be the Banach algebras generated by the left regular representations of $\mathcal{S}$ and $\mathcal{F}_n$, respectively. It is proved that all derivations on 
 $\mathfrak{B}(\mathcal{S})$ and $\mathfrak{B}(\mathcal{F}_n)$
are automatically continuous, and every derivation on
$\mathfrak{B}(\mathcal{S})$ is
 induced by a bounded linear operator in $\mathcal{L}(\mathcal{S})$, the weak closed Banach algebra consisting of all bounded left convolution operators on
 $l^2(\mathcal{S})$. Moreover, we show that the first continuous
Hochschild cohomology group of $\mathfrak{B}(\mathcal{S})$ with coefficients in $\mathcal{L}(\mathcal{S})$ vanishes.  These conclusions
provide positive indications for the left amenability of Thompson's semigroup.

\bigskip

{\bf Key words.} Amenability, derivation,
 Banach algebra, Thompson semigroup, cohomology group.
 
 \bigskip
 
 {\bf MSC.} 47L10, 47B37, 43A07
 
\bigskip
\section{Introduction}
 The cohomology theory of associative algebras  was initiated by Hochschild \cite{Ho1,Ho2,Ho3} in 1945 in terms of multilinear maps into a bimodule and coboundary operators. 
 In 1953, after a discussing with Singer 
 at a conference,
Kaplansky went on from there to write his  paper
\cite{Kap} proposing some problems about derivations on $C^*$-algebras and von Neumann algebras. He showed in \cite{Kap} that every derivation 
on a von Neumann algebra $\mathcal{M}$ of type I is inner, which may be restated in cohomological terms  that the first continuous cohomology group of $\mathcal{M}$ vanishes. 

 Recall that a \textbf{derivation} of a Banach algebra $\mathcal{A}$ (over the complex field $\mathbb{C}$) with coefficients in a Banach $\mathcal{A}$-bimodule $X$ is a linear map $D$ from $\mathcal{A}$ into $X$ satisfying $D(AB)=AD(B)+D(A)B$ for all $A$,  $B$ in $\mathcal{A}$. We say that $D$ is \textbf{inner} when there is an element $T$ in $X$ such that $D(A)=AT-TA$ for each $A$ in $\mathcal{A}$. 
Let $\mathcal{Z}^1(\mathcal{A},X)$
be the space of all (continuous) derivations from $\mathcal{A}$ into $X$ and $\mathcal{B}^1(\mathcal{A},X)$ be the space of all inner derivations, respectively. It is clear that $\mathcal{B}^1(\mathcal{A},X)$ is a linear subspace of $\mathcal{Z}^1(\mathcal{A},X)$. The first
(continuous) Hochschild cohomology group  of $\mathcal{A}$ with coefficients in $X$ is then defined to be the following quotient vector space: 
\[H^1(\mathcal{A},X)=\frac{\mathcal{Z}^1(\mathcal{A},X)}{\mathcal{B}^1(\mathcal{A},X)}.\]

The study of cohomology of operator algebras started with the Kadison-Sakai Theorem \cite{Kad, Sa}: Every derivation on a von Neumann algebra $\mathcal{M}$ is inner, i.e., $H^{1}(\mathcal{M},\mathcal{M})=0$. A similar problem is the case when the bimodule is $B(\mathcal{H})$, which is equivalent to Kadison's similarity problem \cite{Kad2}. To study the classification
of von Neumann algebras, from 1968 to 1972,
Johnson, Kadison and Ringrose \cite{KR,KR2,JKR,J}
proved a series of technical
results of the cohomology groups of von Neumann algebras. In particular, they showed that
$H^{n}(\mathcal{M},\mathcal{M})=0$ for all $n\geq1$ when $\mathcal{M}$ is a hyperfinite von Neumann algebra. Due to this fact, Kadison and Ringrose
conjectured that these cohomology groups are zero for
all von Neumann algebras. With the aid of the theory of completely bounded cohomology, this conjecture can be ultimately reduced to the case when $\mathcal{M}$ is a factor of type II$_1$ with  separable predual. In \cite{SS}, Sinclair and Smith showed that
the conjecture holds for von Neumann algebras with 
Cartan subalgebras and separable preduals. Later in 2003, Christensen et al. \cite{CPSS} proved that the continuous cohomology groups $H^n(\mathcal{M},\mathcal{M})$ and $H^n(\mathcal{M}, B(\mathcal{H})) $ of a factor
$\mathcal{M} \subseteq B(\mathcal{H})$ of type II$_1$ with property $\Gamma$ are zero for all $n\geq 1$. The latest result was proved by Pop and Smith in 
\cite{PS}, which shows
that the second cohomology group $H^{2}(\mathcal{M} \overline{\otimes} \mathcal{N},\mathcal{M} \overline{\otimes} \mathcal{N})$ vanishes for arbitrary type II$_1$ von Neumann algebras $\mathcal{M}$ and $\mathcal{N}$. Note that the free group factor $L_{\mathcal{F}_2}$ 
satisfies none
 of the above cases and the higher order
cohomology groups of $L_{\mathcal{F}_2}$  are still unknown.

 The cohomology of Banach algebras are different from
 that of von Neumann algebras in two main aspects:
 the automatically continuity of derivations and the cohomology groups.  It was conjectured 
 by Kaplansky in \cite{Kap} (which was finally proved by Sakai in  \cite{Sa1})
 that every derivation on a C$^*$-algebra is continuous.
 While, derivations on a Banach algebra are not necessarily continuous.  In \cite{BJ}, Bade and Curtis constructed several examples of Banach algebras on which not all derivations are continuous.
 On the other hand, Johnson and Sinclair \cite{JS} showed that the continuity of derivations still holds for semisimple Banach algebras. Up to now, there are no examples showing that the cohomology groups of  a von Neumann algebra  are non-trivial. Let $\mathcal{A}(\mathbb{D})$ be the set of all complex-valued functions that are continuous on the closed unit disk and analytic in the interior.  Then $\mathcal{A}(\mathbb{D})$ endowed with the supremum norm is a
 unital Banach algebra. The second cohomology group of $\mathcal{A}(\mathbb{D})$ is non-trivial  \cite[Proposition 9.1]{Joh}.
 
Thompson's group $\mathcal{F}=\langle X_0,X_1,X_2,\ldots\ |\ X_{i}^{-1}X_{j}X_{i}=X_{j+1},\ j>i\ \rangle$ was firstly introduced by Richard Thompson in 1965 \cite{CFP}.  It was conjectured by  Geoghegan
  around 1979 that: ($\romannumeral1$) the group $\mathcal{F}$ contains no non-abelian free groups; ($\romannumeral2$) $\mathcal{F}$ is not amenable. Statement 
  ($\romannumeral1$) was obtained by Brin and Squier
  \cite{BS}
  in 1985 while ($\romannumeral2$) still remains unknown. Many research works nowadays are
developed to answering this question due to two main reasons: $(1)$ $\mathcal{F}$ is related to many branches of mathematics such as geometric group theory; $(2)$ the amenability problem is one of the most significant research areas in mathematics. Every nontrivial element in $\mathcal{F}$ can be represented as a unique normal form:
\[X_{0}^{\alpha_0}X_{1}^{\alpha_1}\cdots X_{n}^{\alpha_n}X_{n}^{-\beta_n}\cdots X_{0}^{-\beta_0},\]
where $\alpha_{0}$, $\ldots$, $\alpha_{n}$, $\beta_{0}$, $\ldots$, $\beta_{n}$, and $n$ are natural numbers such that ($\romannumeral1$) exactly one of 
$\alpha_n$ and $\beta_n$ is nonzero and ($\romannumeral2$) if $a_k >0$ and $b_k >0$ for some integer $k$ with $0 < k < n$, then $a_{k+1} > 0$ or $b_{k+1} > 0$. For example, $X_0X_1X_{0}^{-1}$
and $X_0X_1X_{2}^{-1}X_{0}^{-1}$ are two normal forms while $X_0X_2X_{3}^{-1}X_{0}^{-1}$
is not.
The amenability of Thompson's group $\mathcal{F}$
has been an open problem for more that 40 years. We refer the readers to \cite{CFP} for more details about it.

Let $G$ be a locally compact group and $l^{\infty}(G)$ be the space of all the bounded complex-valued functions on $G$. Then $l^{\infty}(G)$ is a commutative $C^{*}$-algebra. The group $G$ is said to be \textbf{amenable} if there is a state $\mu$ on $l^{\infty}(G)$ such that $\mu(gf)=\mu(f)$, where $f\in l^{\infty}(G)$, $g\in G$ and $(gf)(h)=f(g^{-1}h)$ for each $h$ in $G$. The state $\mu$ is then called a \textbf{left invariant mean}  on $l^{\infty}(G)$. Additive group of integers $(\mathbb{Z},+)$ is amenable while the free group $\mathcal{F}_2$ on two generators is not amenable. 
In \cite{Joh}, Johnson characterized the amenable group $G$ through the first Hochschild cohomology groups of $l^{1}(G)$,  the space of all absolute-summable complex-valued functions on $G$, with coefficients in the dual Banach $l^{1}(G)$-bimodules:
Let $G$ be a locally compact group. Then $G$ is amenable if and only if  $H^{1}(l^{1}(G),  \mathcal{X}^{*})=0$ for each Banach $l^{1}(G)$-bimodule $\mathcal{X}$.

In this paper, we shall apply the theory of cohomology 
to study the amenability of $\mathcal{F}$. 
Let $\mathcal{S}$ be the subset $\{X_0^{i_0}\cdots X_n^{i_n}\in\mathcal{F}: \ i_j\in\mathbb{N},\ 1\leq j \leq n
\} $ of $\mathcal{F}$. Then $\mathcal{S}=\langle X_0,X_1,X_2,\ldots\ |\ X_{j}X_{i}=X_{i}
X_{j+1},\ j>i\ \rangle$ is a discrete cancellative semigroup and we call it
 \textbf{Thompson's semigroup}. The structure of 
$\mathcal{F}$ is inherited by $\mathcal{S}$ well.
It was proved by Grigorchuk in 1990 that Thompson's group $\mathcal{F}$ is amenable if and only if Thompson's semigroup $\mathcal{S}$
is left amenable.  It is clear that every non-trivial element in $\mathcal{S}$ can be uniquely written as
$X_{i_1}^{\alpha_1}\cdots X_{i_n}^{\alpha_n}$ ($i_1<\cdots <i_n$) for some positive integers $\alpha_j$ ($1\leq j\leq n$). This property is similar to the fundamental theorem of arithmetic which states that every natural number ($\geq 2$) can be uniquely written as the product of primes up to reorder.  Moreover, classical arithmetic functions, 
such as M\"obius function and divisor function,
 can be generalized on $\mathcal{S}$ \cite{Xue}. These properties can help us to understand the structure of $\mathcal{S}$ better.
Therefore,  studying $\mathcal{S}$ may bring more useful tools to study the amenability of $\mathcal{F}$.
Let $\mathfrak{B}(\mathcal{S})$ be the Banach
algebra generated by the left regular representation of
$\mathcal{S}$ and $X$ be a Banach 
$\mathfrak{B}(\mathcal{S})$-bimodule. The Banach algebras $\mathfrak{B}(\mathcal{S})$ and $\mathcal{L}(\mathcal{S})$ (See Section \ref{sec_Preliminaries}) are two important Banach 
$\mathfrak{B}(\mathcal{S})$-bimodules. Essentially speaking, if $H^{n}(\mathfrak{B}(\mathcal{S}),X)\neq0$ for some  $n\geq1$ and bimodule $X$,
then we can almost obtain that $\mathcal{S}$ is not left amenable. The main topic of this paper is to study 
the cohomology of $\mathfrak{B}(\mathcal{S})$.
The basic idea behind the calculation of
cohomology groups is to take an average in a suitable way. In most cases, averages are taken on amenable groups.  In this article, we shall extend it to $\mathcal{S}$. 

The following sections are organized as follows. In Section \ref{sec_Preliminaries}, we provide
some basic definitions related to semigroup algebras and amenable semigroups. In Section \ref{sec_Unique factorization semigroups}, we introduce
the notion of unique factorization semigroup and give three classical examples: Thompson's semigroup, free semigroups and the amenable semigroup $\mathcal{T}$ (See Example \ref{ex_amenable semigroup T}). The continuity of derivations (See Proposition \ref{prop: continuity_derivations})  and the cohomology groups of $\mathfrak{B}(\mathcal{S})$ (See Theorem \ref{thm_the first cohomology is zero} and Theorem
\ref{thm_first_co_B(S)}) are the main contents of this paper and are discussed in Sections   
\ref{sec_Automatically continuous of derivations} and \ref{sec_The first cohomology group}. We end this paper with some further discussions.

\textbf{Acknowledgements.}
The author wishes to thank  Professor Liming Ge for all his long-term help and encouragement.
He would like to thank Boqing Xue and Yongle Jiang for helpful discussions. He also heartly thanks Hanbin Zhang 
for his valuable suggestions on the manuscript.
This research was supported partly by AMSS of  Chinese Academy of Sciences and by  YMSC of Tsinghua
University.

\section{Preliminaries}\label{sec_Preliminaries}
Group algebras and group actions on manifolds are two major sources for the construction of operator
algebras. In applications, generalizations of groups (group algebras), such as semigroups (semigroup algebras), are also used. A brief description of semigroup algebras follows.
 
The Hilbert space $\mathcal{H}$ is $l^{2}(S)$, the space of all square-summable complex valued functions on the semigroup $S$. The semigroup $S$ (with unit $e$) is assumed to be discrete such that $\mathcal{H}$ is separable. The family of functions 
$(\delta_s)_{s\in S}$ forms an orthonormal basis of
$\mathcal{H}$, where $\delta_s$ is $1$ at the semigroup element $s$ and $0$ elsewhere. 
For each $f$ and $g$ in $\mathcal{H}$, let $L_f$ be the left convolution operator defined as 
$L_fg= f\ast g$, where $ f\ast g(s)=\sum_{uv=s}f(u)g(v)$ for each $s$ in $S$. Let $\mathcal{L}(S)$
be the set of all bounded left convolution operators on
$\mathcal{H}$. Then $\mathcal{L}(S)$ is a subalgebra
of $B(\mathcal{H})$. In general, $\mathcal{L}(S)$ is a non-self-adjoint algebra. Similarly, let $\mathcal{R}(S)$ be the subalgebra of $B(\mathcal{H})$ consisting of all bounded right convolution operators. Then
$\mathcal{L}(S)'=\mathcal{R}(S)$ and $\mathcal{R}(S)'=\mathcal{L}(S)$, which implies that  $\mathcal{L}(S)$ and $\mathcal{R}(S)$ are both weak-closed algebras. For each
$s$ in $S$, the operator $L_{\delta_s}$ is an isometry on $\mathcal{H}$ and is denoted by $L_s$ in this paper for convenience. Let $\mathfrak{B}(S)$ be the Banach algebra generated by $\{L_s:\ s\in S \}$ in
$B(\mathcal{H})$. Then
$\mathfrak{B}(S)$ is a Banach subalgebra of $\mathcal{L}(S)$.

Specific examples for such Banach algebras result from
choosing for $S$  any of the free semigroup $\mathcal{F}_n$ on
 $n$ generators ($\geq2$), Thompson's semigroup 
$\mathcal{S}$, or the multiplicative semigroup of natural numbers $(\mathbb{N},\cdot)$.
The algebraic structures of $\mathfrak{B}(S)$ can reflect the structures of $S$ well. In \cite{DHX}, Dong,
Huang and Xue proved that the maximal ideal space of the commutative Banach algebra $\mathfrak{B}(\mathbb{N})$ is homeomorphic to the Cartesian product of unit closed disk indexed by primes (see Theorem 1.1 in \cite{DHX}). They pointed out that this result implies the fundamental theorem of arithmetic. Analogously, to study the cohomology of the 
Banach algebras  $\mathfrak{B}(\mathcal{S})$, $\mathfrak{B}(\mathcal{F}_2)$  
and $\mathfrak{B}(\mathcal{T})$ can help us to understand the properties of the corresponding semigroups well.

In this paper, we shall show that derivations on $\mathfrak{B}(\mathcal{S})$, $\mathfrak{B}(\mathcal{F}_2)$  
and $\mathfrak{B}(\mathcal{T})$  are automatically continuous, and 
every derivation on  $\mathfrak{B}(\mathcal{S})$ is spatial and induced by a bounded operator in $\mathcal{L}(\mathcal{S})$. Comparing with a result of Kadison \cite[Theorem 4]{Kad} that every derivation on a $C^*$-algebra
is spatial, we give a non-trivial example in the case of Banach algebras. Moreover, the first cohomology group
of $\mathfrak{B}(\mathcal{S})$ with coefficients in
$\mathcal{L}(\mathcal{S})$ is shown to be zero, which gives a positive
indication for the left amenability of  Thompson's semigroup. 

In the following, we recall several concepts and results about amenable semigroups. We say that a discrete cancellative semigroup $S$ is \textbf{left (resp. right) amenable}
if there exists a left (resp. right) invariant mean on $l^{\infty}(S)$. For example, the additive semigroup
of natural numbers is amenable while the free semigroup on 
$n$ generators ($\geq2$) is not.
 A \textbf{left (resp. right) F{\o}lner net} of $S$ is a net of non-empty finite subsets $\{F_{\alpha}\}$ in $S$
  such that for any $s\in S$,
  \[\lim_{\alpha}\frac{|sF_{\alpha}\Delta F_{\alpha}|}{|F_{\alpha}|}=0\ \left( resp.\ \lim_{\alpha}\frac{|F_{\alpha}s\Delta F_{\alpha}|}{|F_{\alpha}|}=0\right).\]
It was firstly proved for groups by Følner, and was then generalized to discrete cancellative  semigroups by Frey \cite{Frey} that  $S$ is left (resp. right) amenable if and only if $S$ has
 a left (resp. right ) F{\o}lner's net.
In \cite{Folner} ,  F{\o}lner proved that
every subgroup of an amenable group is still amenable. For semigroups, it is not always true. In
 \cite{Frey}, Frey gave an example of a left amenable semigroup which contains a semigroup 
that is not left amenable. Moreover, he proved the following lemma.
\begin{lemma}\label{thm_ amenability of subsemigroups}
Let $S$ be a cancellative semigroup such that $S$ contains no free subsemigroup on two generators. If $S$ is left
amenable, then every subsemigroup of $S$ is also left amenable.
\end{lemma}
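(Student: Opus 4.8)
The plan is to realize $S$ and each of its subsemigroups inside a single amenable \emph{group}, and then to transport amenability back down. Since $S$ is itself left amenable, it suffices to fix an arbitrary subsemigroup $T\subseteq S$ and prove that $T$ is left amenable. Note first that $T$ is cancellative and that $T$ contains no free subsemigroup on two generators either, both properties being inherited from $S$ (a free subsemigroup on two generators inside $T$ would be one inside $S$).

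The first ingredient is a purely combinatorial lemma, and it is the only place the hypothesis on $S$ is used: a cancellative semigroup $Q$ containing no free subsemigroup on two generators is reversible on both sides, i.e.\ $aQ\cap bQ\neq\emptyset$ and $Qa\cap Qb\neq\emptyset$ for all $a,b\in Q$. I would prove this by contraposition. Suppose $aQ\cap bQ=\emptyset$; adjoining a unit if necessary, one checks that then $aQ^{1}\cap bQ^{1}=\emptyset$ as well (if $a\in bQ^{1}$ then $aQ\subseteq bQ$, a contradiction, and symmetrically for $b$). Now if two distinct words $w_{1}\neq w_{2}$ in $a,b$ represented the same element of $Q$, deleting their longest common prefix (legitimate by left cancellativity, and exhausting neither word) reduces to $[w_{1}]=[w_{2}]$ with $w_{1}$ beginning with $a$ and $w_{2}$ beginning with $b$; but then $[w_{1}]\in aQ^{1}$ and $[w_{2}]\in bQ^{1}$ coincide, contradicting $aQ^{1}\cap bQ^{1}=\emptyset$. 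Hence $a,b$ generate a free subsemigroup, contrary to hypothesis, so $aQ\cap bQ\neq\emptyset$; the statement for $Qa\cap Qb$ follows symmetrically using right cancellativity and longest common suffixes. Applying this to $S$ and to $T$, both are cancellative and reversible on both sides, so by Ore's theorem each embeds in its group of two-sided quotients, $S\hookrightarrow G:=SS^{-1}=S^{-1}S$ and $T\hookrightarrow H:=TT^{-1}$; since $T\subseteq S$, the group $H$ is the subgroup of $G$ generated by $T$.

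Next I would show $G$ is amenable. By the F{\o}lner characterization quoted above, left amenability of $S$ produces a left F{\o}lner net $\{F_{\alpha}\}$ in $S$; regard the normalized counting measures $\mu_{\alpha}=|F_{\alpha}|^{-1}\sum_{f\in F_{\alpha}}\delta_{f}$ as probability measures on $G$. For $s\in S$ one has $\|s\mu_{\alpha}-\mu_{\alpha}\|_{1}=|sF_{\alpha}\Delta F_{\alpha}|/|F_{\alpha}|\to 0$, and since left translation is an isometry of $l^{1}(G)$ this also gives $\|s^{-1}\mu_{\alpha}-\mu_{\alpha}\|_{1}=\|\mu_{\alpha}-s\mu_{\alpha}\|_{1}\to 0$; as the set of $g\in G$ with $\|g\mu_{\alpha}-\mu_{\alpha}\|_{1}\to 0$ is closed under products (triangle inequality plus isometry of translations) and contains the symmetric generating set $S\cup S^{-1}$, it is all of $G$. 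Thus $G$ satisfies Reiter's condition and is amenable, and hence so is its subgroup $H$. Finally I descend from $H$ to $T$: take a F{\o}lner net $\{E_{\alpha}\}$ for $H$, write the finitely many points of each $E_{\alpha}$ as $a_{i}b_{i}^{-1}$ with $a_{i},b_{i}\in T$, and — using right reversibility of $T$ and iterating over the finite index set — choose $w_{\alpha}\in\bigcap_{i}b_{i}T\subseteq T$; let $\nu_{\alpha}$ be the normalized counting measure on $E_{\alpha}w_{\alpha}$, which lies in $T$ since $a_{i}b_{i}^{-1}w_{\alpha}\in a_{i}T\subseteq T$. Right translation being an $l^{1}$-isometry that commutes with left translations, $\|h\nu_{\alpha}-\nu_{\alpha}\|_{1}=|hE_{\alpha}\Delta E_{\alpha}|/|E_{\alpha}|\to 0$ for all $h\in H$, in particular for all $t\in T$; a weak-$\ast$ cluster point of $\{\nu_{\alpha}\}$ in $l^{\infty}(T)^{\ast}$ is then a left-invariant mean on $l^{\infty}(T)$, so $T$ is left amenable.

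The only genuine obstacle is the combinatorial reversibility lemma of the second paragraph — extracting two-sided reversibility from the absence of free subsemigroups, with due care for short words and for the possibly missing identity element. Everything after that is bookkeeping with F{\o}lner and Reiter nets, using only that translations on a group are commuting isometries of $l^{1}$ together with the standard F{\o}lner/Reiter equivalence for amenable groups and Ore's embedding theorem.
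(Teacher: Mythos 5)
The paper itself offers no proof of this lemma --- it is quoted from Frey's thesis \cite{Frey} --- so there is nothing internal to compare your argument against. Judged on its own, your route is the classical one and, apart from one point noted below, it is sound: the combinatorial step (no free subsemigroup on two generators plus cancellativity forces the two-sided Ore conditions), Ore's embedding $S\hookrightarrow G=SS^{-1}$ and $T\hookrightarrow H=TT^{-1}$, amenability of $G$ from the F{\o}lner net of $S$ via Reiter, heredity to the subgroup $H$, and the descent to $T$ by choosing a common right multiple $w_{\alpha}\in\bigcap_i b_iT$ and right-translating the F{\o}lner sets of $H$ into $T$ are all correct as written. (Incidentally, left amenability of $S$ already gives $aS\cap bS\neq\emptyset$ directly, so the no-free-subsemigroup hypothesis is really only needed to make the \emph{subsemigroup} $T$ reversible; and the intersection condition you invoke for the $b_iT$ is the one usually called the right Ore condition --- a naming issue only, since you establish both sides.)

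The one step you assert without justification is the parenthetical claim that the longest common prefix of $w_{1}$ and $w_{2}$ ``exhausts neither word.'' If $w_{2}=w_{1}v$ with $v$ nonempty and $[w_{1}]=[w_{1}][v]$, cancellation does not immediately yield a contradiction, because there is no identity available to compare $[v]$ with; this case must be closed separately. It can be: for every $z\in Q$, left-cancelling $[w_{1}]$ in $[w_{1}]z=[w_{1}][v]z$ gives $[v]z=z$, and then $(z[v])z=z([v]z)=zz$ with right cancellation gives $z[v]=z$, so $e:=[v]$ is a two-sided identity of $Q$. Writing $v=av'$ (the case $v=bv'$ is symmetric), either $v'$ is empty and $a=e$, or $a[v']=e$, whence $[v']a=e$ by cancellation and $a$ is invertible with inverse in $Q$; in either case $b=a(a^{-1}b)\in aQ$ while $b=be\in bQ$, so $aQ\cap bQ\neq\emptyset$, contradicting the standing assumption. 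With this case handled (and its mirror image for suffixes), your freeness lemma, and hence the whole proof, is complete.
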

\section{Unique factorization semigroup}\label{sec_Unique factorization semigroups}
\begin{definition}
Let $S$ be a discrete semigroup. We say that $S$
is a \textbf{unique factorization semigroup} if there exists 
a subset $\{X_1,X_2,X_3,\ldots\}$ of $S$ such that

($\romannumeral1$)
every non-trivial element in $S$ can be uniquely written as $X_{i_1}^{\alpha_1}\cdots X_{i_n}^{\alpha_n}$, where $\alpha_i$ ($1\leq i\leq n$) are positive integers and $i_1<\cdots<i_n$; 

($\romannumeral2$) If $e=X_{1}^{\beta_1}\cdots X_{n}^{\beta_n}$ for some non-negative integers $\beta_i$ ($1\leq i\leq n$), then $\beta_i$ must be zero. The subset $\{X_1,X_2,X_3,\ldots\}$ is then called a \textbf{basis}
of $\mathcal{S}$.
\end{definition}

For example, the multiplicative semigroup of natural numbers is a unique factorization semigroup and 
the set of all primes is the unique basis up to reorder.
It is also clear that Thompson's semigroup $\mathcal{S}=\langle X_0, X_1,\cdots\ |\ X_jX_i=X_iX_{j+1},\ i<j \rangle$ is a unique factorization semigroup with the basis $\{X_n\in\mathcal{S}:\ n\in\mathbb{N}\}$. 
Next, we introduce some definitions and properties of $\mathcal{S}$ that will be frequently used in Sections \ref{sec_Automatically continuous of derivations} and \ref{sec_The first cohomology group}.
\begin{definition}\label{def:Index_on_Thompson's_semigroup}
Let $X=X_{0}^{\alpha_0}X_{1}^{\alpha_1}\cdots X_{n}^{\alpha_n}$ be an element in $\mathcal{S}$,
 where $\alpha_i$ ($1\leq i\leq n$) are positive integers and $i_1<\cdots<i_n$. 
We define the \textbf{index of $X$ at the $i^{th}$ position}
 as $\mathsf {ind}_i(X):=\alpha_i$ and the \textbf{index} of $X$ is given by $\mathsf {ind}(X):=\sum_{i=0}^{n}\mathsf {ind}_i(X)$. The \textbf{index} of the unit element $e$ is defined to be zero.
\end{definition}
It is clear that $\mathsf {ind}_{0}(uv)=\mathsf {ind}_{0}(u)+\mathsf {ind}_{0}(v)=\mathsf {ind}_0(vu)$ and $\mathsf {ind}(uv)=\mathsf {ind}(u)+\mathsf {ind}(v)=\mathsf {ind}(vu)$ for all $u$ and $v$
in $\mathcal{S}$. In general, $\mathsf {ind}_i(uv)\neq \mathsf {ind}_i(u)+\mathsf {ind}_i(v)$ for $i\geq2$. For example, $\mathsf {ind}_2(X_0X_2X_1)=\mathsf {ind}_2(X_0X_1X_3)=0\neq 1=
\mathsf {ind}_2(X_0X_2)+\mathsf {ind}_2(X_1)$.

\begin{definition}\cite{Xue}\label{def:Partial_order_ on_Thompson's_ semigroup}
Let $u$, $v$, $w$ $\in$ $\mathcal{S}$.
We call $u$ a \textbf{divisor} of $v$ if $v=uw$ which we denote by $u|v$.
\end{definition}
For example, $X_1|X_0X_2$ while
$X_2\nmid X_0X_2$. It is clear that if $u|v$ then $\mathsf {ind}(u)\leq \mathsf {ind}(v)$.
\begin{lemma}\cite{Xue}\label{lem:Partial_order_ on_Thompson's_ semigroup}
The relation ``$|$" is a \textbf{partial order} on $\mathcal{S}$. 
\begin{proof}
Let $u$, $v$, $w$ $\in$ $\mathcal{S}$. Next, we verify the following three axioms of the partial order.
\begin{enumerate}
\item (Reflexivity.) $u=ue$ implies $u|u$.
\item (Antisymmetry.) If $u|v$ and $v|u$, then $u=vw_1$ and $v=uw_2$ for some $w_1$, $w_2$
in $\mathcal{S}$. Then we have $w_2w_1=e$,
which imples $w_1=w_2=e$. Thus $u=v$.
\item  (Transitivity.) Suppose that $u|v$ and $v|w$, 
then $v=uw_1$ and $w=vw_2$ for some $w_1$, $w_2$ in $\mathcal{S}$. We have $w=uw_1w_2$.
Thus $u|w$.
\end{enumerate} 
As a result, we conclude that ``$|$" is a partial order.
\end{proof}
\end{lemma}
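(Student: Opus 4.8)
The plan is to verify the three defining axioms of a partial order for the relation ``$|$'' on $\mathcal{S}$: reflexivity, transitivity, and antisymmetry. The first two are purely formal and follow from the semigroup axioms alone, so I would dispatch them quickly; the real content is the antisymmetry step, which is where cancellativity of $\mathcal{S}$ together with the unique factorization structure comes into play.

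For reflexivity, I would observe that $u = ue$, whence $u|u$. For transitivity, suppose $u|v$ and $v|w$, so that $v = uw_1$ and $w = vw_2$ for some $w_1, w_2 \in \mathcal{S}$; then $w = u(w_1 w_2)$ with $w_1 w_2 \in \mathcal{S}$, so $u|w$.

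For antisymmetry, assume $u|v$ and $v|u$, say $v = uw_1$ and $u = vw_2$ with $w_1, w_2 \in \mathcal{S}$. Substituting the second identity into the first gives $v = v(w_2 w_1)$, and since $\mathcal{S}$ is cancellative this forces $w_2 w_1 = e$. It then remains to show that a product of two elements of $\mathcal{S}$ can equal the identity only when both factors are $e$. This is exactly where the hypothesis that $\mathcal{S}$ is a unique factorization semigroup is used: either directly via condition (ii) of that definition, or via the index function, which is additive ($\mathsf{ind}(xy) = \mathsf{ind}(x) + \mathsf{ind}(y)$), non-negative, and equal to $0$ only at $e$ (every non-trivial element $X_{i_1}^{\alpha_1}\cdots X_{i_n}^{\alpha_n}$ has index $\sum_j \alpha_j \geq 1$). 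Thus $\mathsf{ind}(w_2) + \mathsf{ind}(w_1) = \mathsf{ind}(e) = 0$ forces $\mathsf{ind}(w_1) = \mathsf{ind}(w_2) = 0$, hence $w_1 = w_2 = e$, and therefore $u = v$.

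I do not anticipate a genuine obstacle. The only points requiring care are invoking cancellativity explicitly when passing from $v = v(w_2 w_1)$ to $w_2 w_1 = e$, and making sure that the assertion ``$e$ has no non-trivial divisor in $\mathcal{S}$'' is drawn from the unique factorization hypothesis (or the elementary properties of $\mathsf{ind}$) rather than taken for granted.
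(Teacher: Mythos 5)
Your proof is correct and follows essentially the same route as the paper's: verify the three axioms, with antisymmetry reduced via cancellation to $w_2w_1=e$ and then to $w_1=w_2=e$. You simply make explicit (via the index or condition (ii) of unique factorization) the step $w_2w_1=e\Rightarrow w_1=w_2=e$ that the paper asserts without comment.
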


\begin{lemma}\label{lem:partial_order_S_property}
Let $X$ be an element in $\mathcal{S}$ such that $X_0|X$. Then
for each  $n\in\mathbb{N}$, we have

 ($\romannumeral1$) $X_{1}^{-n}XX_{1}^{n}\in \mathcal{S}$ if and only if
 $X_{1}^{n}|X$;
 
  ($\romannumeral2$) $X_{1}^{n}XX_{1}^{-n}\in 
\mathcal{S} $ if and only if $X=YX_{1}^{n}$ for some $Y\in \mathcal{S}$.
\begin{proof}
 ($\romannumeral1$) If $X_{1}^{n}|X$, then it is clear that $X_{1}^{-n}XX_{1}^{n}\in \mathcal{S}$. Conversely, we have $X_1^n|XX_1^n$.
 If $X_1^n$ is not a divisor of $X$, since
 $X_1^{-n}X_0=X_0X_2^{-n}$, we have that the
 normal form of $X_{1}^{-n}XX_{1}^{n}$ is $ZX_j^{-m}$ for some $Z\in\mathcal{S}$ and $j\geq2$, $m\geq1$. This leads to a contradiction. Thus 
 $X_1^n|X$.
 ($\romannumeral2$) Assume that $W=X_{1}^{n}XX_{1}^{-n}\in \mathcal{S} $, then we have
 $X_0|W$ and $X=X_1^{-n}WX_1^{n}\in \mathcal{S}$. From ($\romannumeral1$), we have $X_1^n|W$. Let $Y= X_1^{-n}W$, then $X=YX_1^n$. The other direction is obvious. We complete the proof.
\end{proof}
\end{lemma}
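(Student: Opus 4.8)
The plan is to carry out every computation inside Thompson's group $\mathcal{F}\supseteq\mathcal{S}$, using only the relation $X_kX_i=X_iX_{k+1}$ for $i<k$ together with its two consequences $X_k^{-1}X_i=X_iX_{k+1}^{-1}$ (for $i<k$) and $X_k^{-1}X_i=X_{i+1}X_k^{-1}$ (for $i>k$); iterating the first of these yields $X_1^{-n}X_0=X_0X_2^{-n}$. I would also use two elementary facts about $\mathcal{S}$: first, $X_0\mid s$ iff the normal form of $s$ begins with $X_0$, i.e. iff $\mathsf{ind}_0(s)\geq1$; second, if $s\in\mathcal{S}$ has normal form $X_0^{a_0}\cdots X_p^{a_p}$ (top index $p$) and $m\geq1$, then $sX_k^{-m}\in\mathcal{S}$ iff $k=p$ and $a_p\geq m$ (equivalently, $X_k^m$ right-divides $s$).

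The two ``if'' directions are immediate: if $X_1^{n}\mid X$, write $X=X_1^{n}Z$, so $X_1^{-n}XX_1^{n}=ZX_1^{n}\in\mathcal{S}$; if $X=YX_1^{n}$, then $X_1^{n}XX_1^{-n}=X_1^{n}Y\in\mathcal{S}$. I would then reduce the forward direction of ($\romannumeral2$) to that of ($\romannumeral1$): assuming $W:=X_1^{n}XX_1^{-n}\in\mathcal{S}$, conjugation-invariance of $\mathsf{ind}_0$ (which extends to the abelianization homomorphism $\mathcal{F}\to\mathbb{Z}$, $X_0\mapsto1$, $X_i\mapsto0$ for $i\geq1$) gives $\mathsf{ind}_0(W)=\mathsf{ind}_0(X)\geq1$, hence $X_0\mid W$; applying ($\romannumeral1$) to $W$ (legitimate since $X_1^{-n}WX_1^{n}=X\in\mathcal{S}$) yields $X_1^{n}\mid W$, say $W=X_1^{n}Y$ with $Y\in\mathcal{S}$, and therefore $X=X_1^{-n}WX_1^{n}=YX_1^{n}$.

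The core is the forward direction of ($\romannumeral1$). Write $X=X_0Y$ with $Y\in\mathcal{S}$ (possible since $X_0\mid X$), so $X_1^{-n}X=X_0X_2^{-n}Y$, and transport the block $X_2^{-n}$ rightwards through the normal form of $Y$ by repeated use of the two consequence-identities: on crossing a block of generators whose index differs from the block's current value $c$, the block stays a single negative power, its index being raised (when crossing smaller indices) or unchanged (when crossing larger ones), and it is diminished only on meeting matching powers. The outcome is $X_2^{-n}Y=Y'X_{j_0}^{-m_0}$ with $Y'\in\mathcal{S}$, $j_0\geq2$, $m_0\geq0$, so $X_1^{-n}X=X_0Y'X_{j_0}^{-m_0}$ and, since $X_{j_0}^{-m_0}X_1^{n}=X_1^{n}X_{j_0+n}^{-m_0}$, also $X_1^{-n}XX_1^{n}=(X_0Y'X_1^{n})X_{j_0+n}^{-m_0}$, where $X_0Y'$ and $X_0Y'X_1^{n}$ both lie in $\mathcal{S}$. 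By the right-divisibility fact, $X_1^{-n}X\in\mathcal{S}$ iff $m_0=0$ or $j_0$ is the top index of $X_0Y'$ with exponent $\geq m_0$, while $X_1^{-n}XX_1^{n}\in\mathcal{S}$ iff $m_0=0$ or $j_0+n$ is the top index of $X_0Y'X_1^{n}$ with exponent $\geq m_0$; but passing from $X_0Y'$ to $X_0Y'X_1^{n}$ merely raises every index $\geq2$ by $n$, so these two conditions coincide. Hence $X_1^{-n}XX_1^{n}\in\mathcal{S}$ forces $X_1^{-n}X\in\mathcal{S}$, i.e. $X_1^{n}\mid X$.

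I expect the one genuinely delicate point to be making the transport step rigorous: one must check that pushing $X_2^{-n}$ through an arbitrary element of $\mathcal{S}$ really produces a positive word followed by a single negative power (rather than some more entangled reduced expression), which comes down to a short case analysis on whether the normal form of $Y$ begins with a power of $X_0$, a power of $X_1$, or a generator of index $\geq2$, with careful bookkeeping of the index shifts; the right-divisibility fact for $\mathcal{S}$ requires a similar, shorter computation. Beyond that the argument is routine manipulation with the defining relations.
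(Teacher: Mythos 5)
Your overall strategy is the same as the paper's (reduce (ii) to (i) via invariance of $\mathsf{ind}_0$, and prove (i) by pushing the block through $X$ using $X_1^{-n}X_0=X_0X_2^{-n}$ and analysing the surviving negative tail), and your transport claim $X_2^{-n}Y=Y'X_{j_0}^{-m_0}$ with $Y'\in\mathcal{S}$, $j_0\geq2$ is correct. However, the ``right-divisibility fact'' on which your final step rests is false as stated: it is not true that $sX_k^{-m}\in\mathcal{S}$ only if $k$ is the top index of the normal form of $s$. For example $s=X_2X_4$ is in normal form with top index $4$, yet $X_3X_2=X_2X_4$, so $sX_2^{-1}=X_3\in\mathcal{S}$ although $k=2\neq4$. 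Your concluding argument uses exactly the false direction of this criterion (``$X_1^{-n}XX_1^{n}\in\mathcal{S}$ implies $j_0+n$ is the top index of $X_0Y'X_1^{n}$ with exponent $\geq m_0$''), so the forward direction of (i) is not established as written; the correct statement is that $X_k^{m}$ right-divides $s$ if and only if the normal form of $s$ has $X_k$-exponent $\geq m$ and contains no generator of index in $\{k+1,\dots,k+m\}$, and proving and using that is precisely the delicate bookkeeping you deferred.

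The gap is repairable along your own lines, and the repair is essentially the paper's argument: if the block does not cancel completely ($m_0\geq1$), then tracking the transport through the normal form of $Y$ (indices increase left to right, the block's index only rises when crossing strictly smaller indices, and at most one cancellation point occurs) shows that $Y'$ contains no generator with index in the window $[j_0,\,j_0+m_0]$; consequently $X_0Y'X_1^{n}X_{j_0+n}^{-m_0}$ has a genuine negative tail in its normal form in $\mathcal{F}$, i.e.\ $X_1^{-n}XX_1^{n}=ZX_{j}^{-m}$ with $j\geq2$, $m\geq1$, contradicting membership in $\mathcal{S}$; hence $m_0=0$ and $X_1^{n}\mid X$. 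In short: same route as the paper, correct skeleton and correct reduction of (ii) to (i), but the pivotal normal-form criterion you invoke is wrong and must be replaced by the (window) criterion above or by the paper's direct ``surviving negative tail'' contradiction.
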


 Finally, with the aid of the index we introduce a total order on $\mathcal{S}$ which plays an important role in Sections \ref{sec_Automatically continuous of derivations} and \ref{sec_The first cohomology group}.
 
\begin{definition}\label{def:Total_order_on_ Thompson's_semigroup}
 Let $u$, $v$ $\in$ 
 $\mathcal{S}$. We write  $u\prec v$ if
one of the following conditions holds: 

($\romannumeral1$)
$\mathsf {ind}(u)<\mathsf {ind}(v)$; 

($\romannumeral2$)
$\mathsf {ind}(u)=\mathsf {ind}(v)$ and  $\mathsf {ind}_{0}(u)>\mathsf {ind}_{0}(v)$;

($\romannumeral3$) $\mathsf {ind}(u)=\mathsf {ind}(v)$, $\mathsf {ind}_{0}(u)=\mathsf {ind}_{0}(v)$, and there exits a positive integer $i$ such that
 $\mathsf {ind}_{j}(u)=\mathsf {ind}_{j}(v)$ whenever $j<i$ while $\mathsf {ind}_{i}(u)>\mathsf {ind}_{i}(v)$. 
 
We use $u\preceq v$ to denote $u\prec v$ or $u=v$. 
 \end{definition}
 For example, $X_0\prec X_1\prec X_0X_1\prec X_0X_2
 \prec X_1X_2$. The relation ``$\preceq$"
 is a \textbf{total order} on $\mathcal{S}$
with the following properties. 
\begin{lemma}\label{lem_property of total order of Thompson's semigroup} 
We have the following:

 ($\romannumeral1$) There exists a unique minimal element in every non-empty subset  of $\mathcal{S}$ under the total order;
 
  ($\romannumeral2$) 
Let $u_i$  and $v_i$ ($1\leq i\leq n$) be $2n$
elements in $\mathcal{S}$.
If $u_{i}\preceq v_{i}$ for each $1\leq i\leq n$, then $\Pi_{i=1}^{n}u_{i}\preceq \Pi_{i=1}^{n}v_{i}$. The equality holds if and only if $u_{i}=v_{i}$ 
 for each $1\leq i\leq n$.
\begin{proof}
It is clear that  ($\romannumeral1$) holds.
We  now give the proof of  ($\romannumeral2$).
Firstly, we consider the case when $n=2$.
If $\mathsf {ind}(u_{1})<\mathsf {ind}(v_{1})$ or $\mathsf {ind}(u_{2})<\mathsf {ind}(v_{2})$, then
$\mathsf {ind}(u_{1}u_{2})=\mathsf {ind}(u_{1})+\mathsf {ind}(u_{2})<\mathsf {ind}(v_{1})+\mathsf {ind}(v_{2})=\mathsf {ind}(v_{1}v_{2})$. This implies $u_{1}u_{2}\prec v_{1}v_{2}$. In the case when $\mathsf {ind}(u_{1})=\mathsf {ind}(v_{1})$ and $\mathsf {ind}(u_{2})=\mathsf {ind}(v_{2})$, if $\mathsf {ind}_{0}(u_{1})>\mathsf {ind}_{0}(v_{1})$ or $\mathsf {ind}_{0}(u_{2})>\mathsf {ind}_{0}(v_{2})$, then
$\mathsf {ind}_{0}(u_{1}u_{2})=\mathsf {ind}_{0}(u_{1})+\mathsf {ind}_{0}(u_{2})>\mathsf {ind}_{0}(v_{1})+\mathsf {ind}_{0}(v_{2})=\mathsf {ind}_{0}(v_{1}v_{2})$. This also implies $u_{1}u_{2}\prec v_{1}v_{2}$. Hence, we may assume that $\mathsf {ind}_{0}(u_{1})=\mathsf {ind}_{0}(v_{1})$ and $\mathsf {ind}_{0}(u_{2})=\mathsf {ind}_{0}(v_{2})$. If either $\mathsf {ind}(u_{1})$ or $\mathsf {ind}(u_{2})$ is zero, then it is trivial. Thus we may further assume that $\mathsf {ind}(u_{1})=\mathsf {ind}(v_{1})\geq1$ and $\mathsf {ind}(u_{2})=\mathsf {ind}(v_{2})\geq1$.

\textbf{Case I:} $u_{2}=v_{2}$. If $u_{1}=v_{1}$, then it is proved. Otherwise, let $u_{1}=X_{0}^{\alpha_{0}}\cdots X_{n}^{\alpha_{n}}$ and $v_{1}=X_{0}^{\beta_{0}}\cdots X_{m}^{\beta_{m}}$. 
By the definition of the total order, there exists an $i>0$ such that
$\alpha_{j}=\beta_{j}$ whenever $j<i$ and
$\alpha_{i}>\beta_{i}$. Since $\{X_l\in\mathcal{S}:\ l\in\mathbb{N}\}$ is a basis of
 $\mathcal{S}$,
it only needs to prove 
$u_{1}X_{l}\prec v_{1}X_{l}$ for each 
$X_{l}\in \mathcal{S}$. In fact, we have
\begin{align} \label{eq1 of lem property of total order of Thompson's semigroup }
u_{1}X_{l}=
\begin{cases}
X_{0}^{\alpha_{0}}\cdots X_{l-1}^{\alpha_{l-1}}X_{l}^{\alpha_{l}+1} X_{l+2}^{\alpha_{l+1}}\cdots X_{i+1}^{\alpha_{i}}
\cdots X_{n+1}^{\alpha_{n}}&l<i,
\cr X_{0}^{\alpha_{0}}\cdots X_{i-1}^{\alpha_{i-1}}X_{i}^{\alpha_{i}+1}  X_{i+2}^{\alpha_{i+1}}\cdots X_{n+1}^{\alpha_{n}}&l=i,
\cr X_{0}^{\alpha_{0}}\cdots X_{i-1}^{\alpha_{i-1}}X_{i}^{\alpha_{i}}  X_{i_{1}}^{\alpha'_{i_{1}}}\cdots X_{i_{t}}^{\alpha'_{i_{t}}}&l>i,
\end{cases}
\end{align}
and
\begin{align} \label{eq2 of lem property of total order of Thompson's semigroup }
v_{1}X_{l}=
\begin{cases}
X_{0}^{\beta_{0}}\cdots X_{l-1}^{\beta_{l-1}}X_{l}^{\beta_{l}+1} X_{l+2}^{\beta_{l+1}}\cdots X_{i+1}^{\beta_{i}}
\cdots X_{m+1}^{\beta_{m}}&l<i,
\cr X_{0}^{\beta_{0}}\cdots X_{i-1}^{\beta_{i-1}}X_{i}^{\beta_{i}+1}  X_{i+2}^{\beta_{i+1}}\cdots X_{m+1}^{\beta_{m}}&l=i,
\cr X_{0}^{\beta_{0}}\cdots X_{i-1}^{\beta_{i-1}}X_{i}^{\beta_{i}}  X_{j_{1}}^{\beta'_{j_{1}}}\cdots X_{j_{s}}^{\beta'_{j_{s}}}&l>i,
\end{cases}
\end{align}
where $i<i_{1}<\cdots<i_{t}$ and $i<j_{1}<\cdots<j_{s}$. By comparing equations (\ref{eq1 of lem property of total order of Thompson's semigroup }) with (\ref{eq2 of lem property of total order of Thompson's semigroup }), we have $u_{1}X_{l}\prec u_{2}X_{l}$.

\textbf{Case II:} $u_{2}\prec v_{2}$. Let $u_{2}=X_{0}^{\gamma_{0}}\cdots X_{k}^{\gamma_{k}}$ and $v_{2}=X_{0}^{\omega_{0}}\cdots X_{l}^{\omega_{l}}$, then
there exists an $h>0$ such that $\gamma_{j}=\omega_{j}$ whenever $j<h$ and
$\gamma_{h}>\omega_{h}$.
By the first case, it can be reduced to the case when $u_{2}=X_{h}^{\gamma_{h}}\cdots X_{k}^{\gamma_{k}}$ and $v_{2}=X_{h}^{\omega_{h}}\cdots X_{l}^{\omega_{l}}$. If $u_{1}=v_{1}=X_{0}^{\alpha_{0}}\cdots X_{n}^{\alpha_{n}}$, then
\begin{align} \label{eq3 of lem property of total order of Thompson's semigroup }
u_{1}u_{2}=
\begin{cases}
X_{0}^{\alpha_{0}}\cdots X_{n}^{\alpha_{n}}X_{h}^{\gamma_{h}}\cdots X_{l}^{\gamma_{l}}&h>n,
\cr X_{0}^{\alpha_{0}}\cdots X_{h-1}^{\alpha_{h-1}}X_{h}^{\alpha_{h}+\gamma_{h}}  X_{h_{1}}^{\alpha'_{h_{1}}}\cdots
X_{h_{t}}^{\alpha'_{h_{t}}}&h\leq n,
\end{cases}
\end{align}
and
\begin{align} \label{eq4 of lem property of total order of Thompson's semigroup }
v_{1}v_{2}=
\begin{cases}
X_{0}^{\alpha_{0}}\cdots X_{n}^{\alpha_{n}}X_{h}^{\omega_{h}}\cdots X_{l}^{\omega_{l}}&h>n,
\cr X_{0}^{\alpha_{0}}\cdots X_{h-1}^{\alpha_{h-1}}X_{h}^{\alpha_{h}+\omega_{h}}  X_{h_{1}}^{\alpha''_{h_{1}}}\cdots
X_{h_{s}}^{\alpha''_{h_{s}}}&h\leq n,
\end{cases}
\end{align}
where $h<h_{1}<\cdots<h_{max\{t,s\}}$. By comparing equations (\ref{eq3 of lem property of total order of Thompson's semigroup }) with (\ref{eq4 of lem property of total order of Thompson's semigroup }), we have $u_{1}u_{2}\prec v_{1}v_{2}$. The proof of the case when
$u_{1}\prec v_{1}$ is similar, we omit it here. Moreover, we can obtain from the above process directly that $u_1u_2=v_1v_2$ if and only if $u_1=v_1$ and $u_2=v_2$.

The general case when $n>2$ can be obtained by induction.
\end{proof}
\end{lemma}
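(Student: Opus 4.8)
The plan is to dispose of part (i) quickly and concentrate on part (ii). Since $\preceq$ is a total order, any minimal element of a subset is automatically its unique minimum, so (i) is precisely the assertion that $(\mathcal{S},\preceq)$ is well-ordered. Given a nonempty $A\subseteq\mathcal{S}$, I would locate its least element by running through the three clauses of Definition \ref{def:Total_order_on_ Thompson's_semigroup} in order: pass to the (nonempty) set $A_1$ of elements of $A$ of minimal index $N$; then to the set $A_2\subseteq A_1$ of those of maximal $\mathsf{ind}_0$, say equal to $k$; the members of $A_2$ correspond bijectively to finite-support sequences $(\mathsf{ind}_1(a),\mathsf{ind}_2(a),\dots)$ of nonnegative integers with fixed sum $N-k$, and on these $\preceq$ restricts to the opposite of the lexicographic order. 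A short induction on $N-k$ (at each step, localise the least index occurring in the support over the family, then the largest exponent occurring there, then recurse on the tail, whose sum has strictly dropped) shows that any nonempty family of finite-support nonnegative-integer sequences with a prescribed sum has a lexicographically largest member; hence $A_2$, and therefore $A$, has a $\preceq$-least element. Equivalently, $\preceq$ admits no infinite descending chain.

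For (ii) I would first reduce to $n=2$. Granting the case $n=2$ and the assertion for $n-1$ factors, write $\prod_{i=1}^{n}u_i=\bigl(\prod_{i=1}^{n-1}u_i\bigr)u_n$ and $\prod_{i=1}^{n}v_i=\bigl(\prod_{i=1}^{n-1}v_i\bigr)v_n$; the inductive hypothesis gives $\prod_{i=1}^{n-1}u_i\preceq\prod_{i=1}^{n-1}v_i$, and two applications of the $n=2$ case (once with the second factors equal, once with the first factors equal) yield $\prod_{i=1}^{n}u_i\preceq\prod_{i=1}^{n}v_i$; the equality clause for general $n$ then chains the $n=2$ equality clause with the inductive hypothesis.

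The heart of the argument is the case $n=2$, which I would deduce from two one-sided translation statements: for all $a,b,w\in\mathcal{S}$, $a\preceq b$ implies $aw\preceq bw$, and $a\preceq b$ implies $wa\preceq wb$. Given these, $u_1u_2\preceq v_1u_2\preceq v_1v_2$ gives the inequality; and since in a totally ordered set $x\preceq y\preceq z$ with $x=z$ forces $x=y=z$, the chain collapses exactly when $u_1u_2=v_1u_2$ and $v_1u_2=v_1v_2$, which by cancellativity of $\mathcal{S}$ means precisely $u_1=v_1$ and $u_2=v_2$. To prove the right-translation statement: by additivity of $\mathsf{ind}$ and $\mathsf{ind}_0$ (recorded right after Definition \ref{def:Index_on_Thompson's_semigroup}) the cases $\mathsf{ind}(a)<\mathsf{ind}(b)$ and $\bigl(\mathsf{ind}(a)=\mathsf{ind}(b),\ \mathsf{ind}_0(a)>\mathsf{ind}_0(b)\bigr)$ are immediate, so one is left with $a,b$ of equal index and equal $\mathsf{ind}_0$, and either $a=b$ (trivial) or $a$ and $b$ first disagree at some position $i\ge1$ with $\mathsf{ind}_i(a)>\mathsf{ind}_i(b)$. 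Since $\{X_l\}$ is a basis, writing $w=X_{l}w'$ with $\mathsf{ind}(w')<\mathsf{ind}(w)$ and inducting on $\mathsf{ind}(w)$ reduces to $w=X_l$ a single generator; there one computes the normal form of $aX_l$ from the relation $X_jX_i=X_iX_{j+1}$ (right multiplication by $X_l$ raises the exponent at position $l$ by one and shifts the exponents of every higher position up by one place), and comparing the exponent strings of $aX_l$ and $bX_l$ in the three sub-cases $l<i$, $l=i$, $l>i$ shows the first place of disagreement still favours $aX_l$. The left-translation statement goes the same way, now using $X_lX_i=X_iX_{l+1}$, which drives $X_l$ rightward while incrementing it each time it passes a generator of smaller index.

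I expect the main obstacle to be exactly this normal-form bookkeeping for the two single-generator translations: one must keep precise track of how positions migrate under multiplication by a generator and verify, in each sub-case for the position of $l$ relative to the first point $i$ where $a$ and $b$ differ, that the comparison is preserved with the correct sign. Everything else — the reduction $n\to2$, the reduction to a single generator via induction on $\mathsf{ind}(w)$, and the passage from the non-strict inequality to the strict and equality assertions using cancellativity — is routine.
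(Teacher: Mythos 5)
Your proposal is correct and follows essentially the same route as the paper: reduce to $n=2$, dispose of the cases governed by $\mathsf{ind}$ and $\mathsf{ind}_0$ via additivity, and settle the remaining case by using the basis property to reduce to multiplication by a single generator and comparing normal forms, which is exactly the content of the paper's Cases I and II. The differences are only organizational refinements: you actually prove the well-ordering in (i) (which the paper calls clear), you make the chain $u_1u_2\preceq v_1u_2\preceq v_1v_2$ explicit instead of treating the mixed case as ``similar,'' and you derive the equality clause from cancellativity of $\mathcal{S}$ rather than from inspection of the computation.
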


We now turn to the free semigroup $\mathcal{F}_n$ on $n$ ($\geq 2$) generators $a_i$ ($1\leq i \leq n$).
The following two definitions on $\mathcal{F}_n$
are parallel to Definitions \ref{def:Index_on_Thompson's_semigroup} and \ref{def:Total_order_on_ Thompson's_semigroup}, and Lemma \ref{lem:Fn_total_order_property} is parallel to
Lemma \ref{lem_property of total order of Thompson's semigroup}.
\begin{definition}
\label{def:Index_on_the_free_semigroup} Let $g=\prod_{j=1}^m \prod_{k=1}^{n}a_k^{i_{jk}}$ be an element in $\mathcal{F}_n$, where $i_{jk}$
 ($1\leq j \leq m$, $1\leq k\leq n$) are non-negative  integers. The \textbf{index} of $g$ is defined as $ \mathsf {ind}(g):= \sum_{j=1}^m \sum_{k=1}^{n}i_{jk}$. 
 \end{definition}
 It is clear that 
 $\mathsf {ind}(gh)=\mathsf {ind}(g)+\mathsf {ind}(h)$ for all $g$ and $h$ in $\mathcal{F}_n$.
 \begin{definition}
\label{def:Total_order_on_the_free_semigroup} Let
 $g=\prod_{j=1}^m \prod_{k=1}^{n}a_k^{i_{jk}}$
 and  $h=\prod_{j=1}^m \prod_{k=1}^{n}a_k^{i'_{jk}}$ be two  elements in $\mathcal{F}_n$.
We write $g\prec h$ if one of the following conditions holds:

 ($\romannumeral1$) $\mathsf {ind}(g)<\mathsf {ind}(h)$; 
 
 ($\romannumeral2$) $\mathsf {ind}(g)=\mathsf {ind}(h)$ and there exist 
some $j_0$ ($1\leq j_0\leq m$) and  $k_0$ ($1\leq k_0\leq n$) such that $i_{j_0k_0}> i'_{j_0k_0}$ and $i_{jk}= i'_{jk}$ whenever $j< j_0$ or  $j= j_0$ and $k<k_0$. 

We use $g\preceq h$ to denote $g\prec h$ or $g=h$.
\end{definition}
 The relation ``$\preceq$"  is a \textbf{total order} on $\mathcal{F}_n$. 
\begin{lemma} \label{lem:Fn_total_order_property}
We have the following:

  ($\romannumeral1$) There exists a unique minimal element in each subset  of $\mathcal{F}_n$ under the total order;
  
  ($\romannumeral2$) 
Let $g_i$  and $h_i$ ($1\leq i\leq m$) be $2m$
elements in $\mathcal{F}_n$.
If $g_{i}\preceq h_{i}$ for each $i$ ($1\leq i\leq m$), then $\Pi_{i=1}^{m}g_{i}\preceq \Pi_{i=1}^{m}h_{i}$. Moreover, the equality holds if and only if $g_{i}=h_{i}$ 
 for each $i$ ($1\leq i\leq m$).
\end{lemma}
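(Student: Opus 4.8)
The plan is to mirror the proof of Lemma \ref{lem_property of total order of Thompson's semigroup}, but the situation is in fact considerably simpler because in $\mathcal{F}_n$ there are no relations among the generators: concatenation of normal forms is just concatenation of words, with no rewriting. Part ($\romannumeral1$) is immediate: since the index takes values in $\mathbb{N}$, any non-empty subset $E\subseteq\mathcal{F}_n$ has elements of minimal index, and among those finitely many (for fixed index there are only finitely many words of that length) the total order $\preceq$ picks out a unique least element; uniqueness is clear because $\preceq$ is a total order, so I would just remark this. The substance is part ($\romannumeral2$), and by a routine induction on $m$ it suffices to treat $m=2$: assuming $g_1g_2\preceq h_1h_2$ has been established whenever $g_i\preceq h_i$ for $i=1,2$, one obtains the general case by grouping $\prod_{i=1}^{m}g_i=(\prod_{i=1}^{m-1}g_i)\,g_m$ and applying the two-factor case together with the inductive hypothesis, and the equality clause propagates the same way.

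For the two-factor case, write $g_i=\prod_{j}\prod_{k}a_k^{\,p^{(i)}_{jk}}$ and $h_i=\prod_{j}\prod_{k}a_k^{\,q^{(i)}_{jk}}$ in normal form. First I would dispose of the index-strict case: if $\mathsf{ind}(g_1)<\mathsf{ind}(h_1)$ or $\mathsf{ind}(g_2)<\mathsf{ind}(h_2)$, then since $\mathsf{ind}$ is additive, $\mathsf{ind}(g_1g_2)=\mathsf{ind}(g_1)+\mathsf{ind}(g_2)<\mathsf{ind}(h_1)+\mathsf{ind}(h_2)=\mathsf{ind}(h_1h_2)$, giving $g_1g_2\prec h_1h_2$. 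So I may assume $\mathsf{ind}(g_1)=\mathsf{ind}(h_1)$ and $\mathsf{ind}(g_2)=\mathsf{ind}(h_2)$; then equality of the products forces (by additivity of index again) that $g_1=h_1$ and $g_2=h_2$ cannot both fail without a strict inequality appearing, which is exactly what I want to show. Assume then that at least one of $g_1\prec h_1$, $g_2\prec h_2$ holds. Here the key fact is that the normal form of $g_ih_i$ — sorry, of $g_1g_2$ — is literally the concatenation of the normal form of $g_1$ followed by that of $g_2$, so the exponent array of $g_1g_2$ is the exponent array of $g_1$ (occupying the first block of syllables) followed by that of $g_2$. Comparing $g_1g_2$ with $h_1h_2$ lexicographically through this concatenated array: if $g_1\prec h_1$, the first position where the arrays of $g_1$ and $h_1$ differ already lies inside the first block, the entries of $g_1$ and $h_1$ agree before it, and at that position $g_1$'s entry exceeds $h_1$'s, which by Definition \ref{def:Total_order_on_the_free_semigroup}($\romannumeral2$) gives $g_1g_2\prec h_1h_2$ (the equal index is inherited from $\mathsf{ind}(g_1g_2)=\mathsf{ind}(h_1h_2)$); if instead $g_1=h_1$ and $g_2\prec h_2$, the first block agrees entirely and the comparison is decided inside the second block exactly as for $g_2$ versus $h_2$, again yielding $g_1g_2\prec h_1h_2$.

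Two minor bookkeeping points need care rather than ingenuity. First, the syllable decomposition $\prod_j\prod_k a_k^{i_{jk}}$ is not unique as written (a syllable $a_k^{0}$ contributes nothing, and one must agree on how blocks are indexed), so I would fix once and for all the convention that the normal form of an element of $\mathcal{F}_n$ is its reduced word and that Definition \ref{def:Total_order_on_the_free_semigroup} is read via that reduced word; then ``concatenation of normal forms equals normal form of the product'' holds verbatim because $\mathcal{F}_n$ is free, with no cancellation. Second, when padding two arrays of the same total index to a common shape to compare them position-by-position, one should note that the padding (with zero exponents) does not affect which is $\prec$ which. Neither point is a real obstacle; the only thing resembling a difficulty is keeping the lexicographic comparison across the concatenation aligned with Definition \ref{def:Total_order_on_the_free_semigroup}, and that is purely a matter of careful notation. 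The equality characterization then falls out of the same analysis: $g_1g_2=h_1h_2$ in $\mathcal{F}_n$ forces, reading off the concatenated arrays, $g_1=h_1$ and $g_2=h_2$, so no strict inequality was used, completing the induction.
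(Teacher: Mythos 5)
The paper itself gives no proof of this lemma (it is stated as ``parallel'' to Lemma \ref{lem_property of total order of Thompson's semigroup}), so there is no argument to compare routes with; your proposal has to stand on its own, and it has a genuine gap exactly at the point you dismiss as ``minor bookkeeping''. Your central step compares $g_1g_2$ with $h_1h_2$ through the \emph{concatenated} exponent arrays, and you assert that block alignment and zero-padding ``do not affect which is $\prec$ which''. That assertion is false. The order of Definition \ref{def:Total_order_on_the_free_semigroup} must be read through one fixed canonical decomposition of each element (you chose the reduced word, i.e.\ maximal ascending blocks $a_1^{*}\cdots a_n^{*}$), and with that reading the array of a product is \emph{not} the concatenation of the arrays (blocks merge at the junction: the canonical array of $a_1a_2$ is $(1,1)$, not $(1,0),(0,1)$), and different paddings genuinely reverse comparisons. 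Concretely, take $n=2$, $g_1=h_1=a_2$, $g_2=a_1$, $h_2=a_2$, so $g_2\prec h_2$ since $a_1\prec a_2$. Your concatenated arrays give $(0,1),(1,0)$ versus $(0,1),(0,1)$, hence $g_1g_2\prec h_1h_2$; but the canonical arrays of the two products are $(0,1),(1,0)$ for $a_2a_1$ and $(0,2),(0,0)$ for $a_2^2$, and Definition \ref{def:Total_order_on_the_free_semigroup}(ii) then yields $a_2^{2}\prec a_2a_1$ — the opposite verdict. So your argument proves a statement about a representation-dependent comparison, not about the order you fixed; worse, the example shows that under your own convention the claimed monotonicity of part (\romannumeral2) fails, so no amount of notational care completes the proof as written.

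What this reveals is that the disambiguation of Definition \ref{def:Total_order_on_the_free_semigroup} has to be chosen so that the lemma is true, and the right choice is essentially the one your concatenation argument silently uses: order first by $\mathsf{ind}$ (word length) and then lexicographically on the words themselves, comparing generator subscripts letter by letter. For that order your proof goes through verbatim — for words of equal length, concatenation respects letter-by-letter lexicographic comparison, equality of products forces equality of the factors, and part (\romannumeral1) follows since there are only finitely many words of each length — and this order still has the two properties needed later (lower stability, hence semisimplicity of $\mathfrak{B}(\mathcal{F}_n)$). So the missing content is not the induction or the index-strict case, both of which are fine, but an explicit, consistent definition of the comparison being used and a verification (or replacement) of the false alignment claim; as submitted, the crux of part (\romannumeral2) is unproved.
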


\begin{proposition}\label{ex_free semigroups are unique factorization semigroups}
For $n\geq 2$,
free semigroup $\mathcal{F}_n$ on $n$ generators $a_i$ ($1\leq i \leq n$) is a unique factorization semigroup \footnote{We thank D. Wu for 
his useful suggestion on the proof of Example 
\ref{ex_free semigroups are unique factorization semigroups}.}.
\begin{proof}
Let $X_i=a_i$ ($1\leq i\leq n$), then $X_1\prec X_2
\prec \cdots \prec X_n$. Let $X_{n+1}$ be the minimal element of $\mathcal{F}$ such that $X_{n+1}$ cannot be represented as $X_1^{i_1}\cdots 
X_n^{i_n}$ for any non-negative integers $i_j$ ($1\leq j\leq n$). Then $X_n\prec X_{n+1}$.
 Let $X_{n+2}$ be the minimal element of $\mathcal{F}$ such that $X_{n+2}$ cannot be represented as $X_1^{i_1}\cdots 
X_n^{i_{n+1}}$ for any non-negative integers $i_j$ ($1\leq j\leq n+1$). Then $X_{n+1}\prec X_{n+2}$.
Continuing this process, we can obtain a subset $\{X_1,X_2,\ldots\}$ of $\mathcal{F}_n$ such that
$X_i\prec X_{i+1}$ for each $i$ ($\geq 1$). Then every element can be written as the product 
$X_1^{i_1}\cdots X_n^{i_n}$ for some non-negative integers $i_j$ ($1\leq j\leq n$). We shall show the uniqueness by induction on the index. It is clear that the uniqueness holds for index $\leq1$.
Assume that the uniqueness holds for index $\leq k$ 
($k\geq 1$).
Let $g\in \mathcal{F}_n$ and $\mathsf{ind}(g)= k+1$.
Suppose that $g$ has two different forms:
\[g= X_{i_1}^{\alpha_1}\cdots X_{i_n}^{\alpha_n}= X_{j_1}^{\beta_1}\cdots X_{j_m}^{\beta_m},\]
where $\alpha_i$ ($1\leq i\leq n$) and $\beta_j$ ($1\leq j\leq m$) are positive integers, $i_1<\cdots<i_n$ and $j_1<\cdots<j_m$. If $i_1=j_1$, then     
\[X_{i_1}^{\alpha_1-1}\cdots X_{i_n}^{\alpha_n}= X_{j_1}^{\beta_1-1}\cdots X_{j_m}^{\beta_m},\]
which implies that $n=m$,  
$i_t=j_t$ and $\alpha_t=\beta_t$ ($1\leq t\leq n$). 
This leads to a contradiction.

 We may assume that 
$i_1<j_1$. Then there exists some $l$ ($1\leq l\leq n-1$) such that
$(X_{i_1}^{\alpha_1}\cdots X_{i_l}^{\alpha_l})^{-1}X_{j_1}$
belongs to $\mathcal{F}_n$ and $1\leq \mathsf{ind}((X_{i_1}^{\alpha_1}\cdots X_{i_l}^{\alpha_l})^{-1}X_{j_1})< \mathsf{ind}(X_{i_{l+1}})$,  or $(X_{i_1}^{\alpha_1}\cdots X_{i_{l-1}}^{\alpha_{l-1}}$
$X_{i_l}^{\alpha'_l})^{-1}X_{j_1}$ ($1\leq l\leq n$)
belongs to $\mathcal{F}_n$ for some $\alpha'_l$ ($1\leq\alpha'_l< \alpha_l$) and $1\leq
\mathsf{ind}((X_{i_1}^{\alpha_1}\cdots X_{i_{l-1}}^{\alpha_{l-1}}X_{i_l}^{\alpha'_l})^{-1}$ $X_{j_1})< \mathsf{ind}(X_{i_l})$. In the first case, we have
$(X_{i_1}^{\alpha_1}\cdots X_{i_l}^{\alpha_l})^{-1}X_{j_1}=X_{s_1}^{\gamma_1}\cdots X_{s_t}^{\gamma_t}$ for some positive integers $\gamma_1,\ldots,\gamma_t$ and 
$s_1<\cdots<s_t<min\{ i_{l+1},j_1\}$. Then
\[ X_{i_{l+1}}^{\alpha_{l+1}}\cdots X_{i_n}^{\alpha_n}= X_{s_1}^{\gamma_1}\cdots X_{s_t}^{\gamma_t}X_{j_1}^{\beta_1-1}\cdots X_{j_m}^{\beta_m},\]
which leads to a contradiction. In the second case, we have
$(X_{i_1}^{\alpha_1}\cdots X_{i_l}^{\alpha'_l})^{-1}X_{j_1}=X_{s_1}^{\gamma_1}\cdots X_{s_t}^{\gamma_t}$ for some positive integers $\gamma_1,\ldots,\gamma_t$ and 
$s_1<\cdots<s_t<min\{ i_{l},j_1\}$. Then
\[ X_{i_l}^{\alpha_i-\alpha'_i}X_{i_{l+1}}^{\alpha_{l+1}}\cdots X_{i_n}^{\alpha_n}= X_{s_1}^{\gamma_1}\cdots X_{s_t}^{\gamma_t}X_{j_1}^{\beta_1-1}\cdots X_{j_m}^{\beta_m},\]
which also leads to a contradiction. Above all, we complete the proof. 
\end{proof}
\end{proposition}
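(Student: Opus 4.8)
The plan is to produce the basis of $\mathcal{F}_n$ by a greedy construction governed by the total order $\preceq$ of Definition~\ref{def:Total_order_on_the_free_semigroup}. The point to exploit is that $\preceq$ is a well-order (Lemma~\ref{lem:Fn_total_order_property}) in which, moreover, every initial segment is finite, since $\mathsf{ind}$ is its primary key and there are only finitely many elements of $\mathcal{F}_n$ of any given index. Set $X_i=a_i$ for $1\le i\le n$; having chosen $X_1\prec\cdots\prec X_m$ with $m\ge n$, let $X_{m+1}$ be the $\preceq$-least element of $\mathcal{F}_n$ that is \emph{not} of the form $X_1^{k_1}\cdots X_m^{k_m}$ with non-negative integer exponents. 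Such an element always exists: there are $n^{L}$ words of length $L$ but only $O(L^{m})$ elements of the form $X_1^{k_1}\cdots X_m^{k_m}$ of index at most $L$, so for $n\ge2$ the set of non-representable elements is never empty and the basis is genuinely infinite. Since $X_m=X_1^0\cdots X_{m-1}^0X_m^{1}$ is representable, $X_m\prec X_{m+1}$, so $X_1\prec X_2\prec\cdots$ is strictly increasing and hence, by finiteness of initial segments, cofinal in $(\mathcal{F}_n,\preceq)$.

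The existence half of condition (i), and condition (ii), are then quick. For any non-trivial $g$, cofinality gives $k$ with $g\prec X_{k+1}$ (if $g$ is itself some $X_j$ it already has the form $X_j^{1}$); by the minimality built into the greedy choice, every element $\prec X_{k+1}$ is representable via $X_1,\dots,X_k$ (and every element $\prec X_{n+1}$ via $a_1,\dots,a_n$), and deleting the zero exponents from such a representation gives the required normal form with strictly increasing indices and positive exponents. Condition (ii) holds because $\mathsf{ind}\bigl(X_1^{\beta_1}\cdots X_m^{\beta_m}\bigr)=\sum_j\beta_j\,\mathsf{ind}(X_j)\ge\sum_j\beta_j$, as each $X_j$ is non-trivial, so this index is $0$ only when all $\beta_j=0$.

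The real content is uniqueness, which I would prove by induction on $\mathsf{ind}(g)$, the cases $\mathsf{ind}(g)\le1$ being trivial. Suppose $g=X_{i_1}^{\alpha_1}\cdots X_{i_n}^{\alpha_n}=X_{j_1}^{\beta_1}\cdots X_{j_m}^{\beta_m}$ with strictly increasing indices and positive exponents. If $i_1=j_1$, cancel one $X_{i_1}$ on the left (using that $\mathcal{F}_n$ is cancellative) to get two normal forms of an element of strictly smaller index, and the induction hypothesis forces them, hence the original pair, to agree. If $i_1\ne j_1$, say $i_1<j_1$, I use the structural fact that in a free semigroup the left-divisors of a fixed element are totally ordered by left-divisibility (they are the prefixes of its reduced word). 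Since $X_{j_1}\mid g$ and every partial product $X_{i_1}^{\alpha_1}\cdots X_{i_{l-1}}^{\alpha_{l-1}}X_{i_l}^{c}$ ($0\le c\le\alpha_l$) left-divides $g$, the element $X_{j_1}$ lies strictly between two consecutive such partial products $w\mid X_{j_1}\mid w'$ (the coincidences $X_{j_1}=w$ or $X_{j_1}=w'$ are excluded because they would express $X_{j_1}$ as a product of basis elements with indices $<j_1$, contradicting the greedy choice of $X_{j_1}$). The remainder $z:=w^{-1}X_{j_1}\in\mathcal{F}_n$ then satisfies $1\le\mathsf{ind}(z)<\mathsf{ind}(w')-\mathsf{ind}(w)$, which equals $\mathsf{ind}(X_{i_l})$ or $\mathsf{ind}(X_{i_{l+1}})$ according as the gap $w\to w'$ is inside block $l$ or across the boundary into block $l+1$; also $\mathsf{ind}(z)<\mathsf{ind}(X_{j_1})$. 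Since $\mathsf{ind}$ is the primary key of $\preceq$ and the basis is listed $\preceq$-increasingly, any factorization $z=X_{s_1}^{\gamma_1}\cdots X_{s_t}^{\gamma_t}$ (available from the existence part, with $t\ge1$) has $s_1<\cdots<s_t<\min\{i_l,\,j_1\}$ (resp. $\min\{i_{l+1},\,j_1\}$). Substituting $X_{j_1}=wz$ into $g=X_{j_1}^{\beta_1}\cdots$ and cancelling $w$ against the first expression yields two normal forms of $w^{-1}g$, an element of index $\le\mathsf{ind}(g)-1$; their leading basis elements are $X_{s_1}$ on one side and $X_{i_l}$ (resp. $X_{i_{l+1}}$) on the other, and these differ because $s_1<i_l$ (resp. $i_{l+1}$). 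This contradicts the induction hypothesis, so $g$ has a unique factorization.

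I expect the \textbf{main obstacle} to be exactly this last step: correctly locating $X_{j_1}$ among the prefix partial products of the competing factorization, ruling out the boundary coincidences, and extracting the sharp index bound on the remainder $z$ so that $z$ is forced to be built only from basis elements lying below the obstruction. The free-semigroup fact that the left-divisors of an element form a chain, together with the fact that $\mathsf{ind}$ is the dominant term in $\preceq$, are what make this bound work; everything else — the greedy construction, condition (ii), and the $i_1=j_1$ case — is routine bookkeeping.
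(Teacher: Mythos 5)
Your proposal is correct and follows essentially the same route as the paper's proof: the same greedy construction of the basis via the total order, existence of representations from the minimality in that construction, condition (ii) from the index, and uniqueness by induction on the index with the identical case split ($i_1=j_1$ handled by cancellation; $i_1<j_1$ handled by locating $X_{j_1}$ among the prefix partial products, factoring the remainder through basis elements of strictly smaller subscript, and contradicting the induction hypothesis). The extra details you supply (the counting argument showing the construction never terminates, cofinality, and the prefix-chain fact) only make explicit what the paper leaves implicit.
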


It is known that Thompson's semigroup is not right amenable while the 
left amenability is still unknown. The free semigroup 
$\mathcal{F}_n$ is neither left nor right amenable.
For completeness, we construct the following example.
\begin{proposition}\label{ex_amenable semigroup T}
Let $\mathcal{T}$ be the semigroup generated by 
\begin{equation*}
A=
\begin{pmatrix}
1 & 0 \\
0 & 2
\end{pmatrix},\
B=
\begin{pmatrix}
2 & 0 \\
0 & 1
\end{pmatrix},\
C=
\begin{pmatrix}
0 & 2 \\
3 & 0
\end{pmatrix}
\end{equation*}
in $GL_{2}(\mathbb{Z})$.  Then $\mathcal{T}$ is a
non-commutative unique factorization semigroup. Moreover, $\mathcal{T}$ is both left and right amenable.
\begin{proof}
 It can be verified directly that
$AB=BA$, $AC=CB$, $BC=CA$. Hence $\mathcal{T}$
is non-commutative and every element in
$\mathcal{T}$ can be written as $A^{\alpha_1}B^{\alpha_2}C^{\alpha_3}$
for some natural numbers $\alpha_1$, $\alpha_2$ and $\alpha_3$.
Let $X$ be a matrix in $\mathcal{T}$ with two different representations: $X=A^{\alpha_{1}}B^{\alpha_{2}}C^{\alpha_{3}}=A^{\beta_{1}}B^{\beta_{2}}C^{\beta_{3}}$, 
where $\alpha_i$ and $\beta_i$ ($1\leq i\leq 3$) are natural numbers.  Take the determinant at the both side, we have $2^{\alpha_{1}}\cdot 2^{\alpha_{2}}\cdot(-6)^{\alpha_{3}}=2^{\beta_{1}}\cdot 2^{\beta_{2}}\cdot(-6)^{\beta_{3}}$.
 Hence $\alpha_{3}=\beta_{3}$ and $\alpha_{1}+\alpha_{2}=\beta_{1}+\beta_{2}$. Then we have $A^{\alpha_{1}-\beta_{1}}=B^{\beta_{2}-\alpha_{2}}$, which implies $\alpha_{1}=\beta_{1}$ and $\alpha_{2}=\beta_{2}$. Thus $\mathcal{T}$ is a unique factorization semigroup.
   For each positive integer $N$ ($\geq2$), let $F_{N}=\{A^{\alpha_{1}}B^{\alpha_{2}}C^{\alpha_{3}}\in\mathcal{T}\ |\ 0\leq \alpha_{i}\leq N-1,\ i=1,2,3\}$, we have
\begin{align*}
AF_{N}\cap F_{N}=\{A^{\alpha_{1}}B^{\alpha_{2}}C^{\alpha_{3}}\in \mathcal{T}\ |\ 1\leq \alpha_{1}\leq N-1,\ 0\leq \alpha_{i}\leq N-1,\ i=2,3\},\\
BF_{N}\cap F_{N}=\{A^{\alpha_{1}}B^{\alpha_{2}}C^{\alpha_{3}}\in \mathcal{T}\ |\ 1\leq \alpha_{2}\leq N-1,\ 0\leq \alpha_{i}\leq N-1,\ i=1,3\},\\
CF_{N}\cap F_{N}=\{A^{\alpha_{1}}B^{\alpha_{2}}C^{\alpha_{3}}\in \mathcal{T}\ |\ 1\leq \alpha_{3}\leq N-1,\ 0\leq \alpha_{i}\leq N-1,\ i=1,2\}
\end{align*}
and $|AF_{N}\cap F_{N}|=|BF_{N}\cap F_{N}|=|CF_{N}\cap F_{N}|=N^{2}(N-1)$. Then
\[\lim_{N\rightarrow\infty}\frac{|AF_{N}\cap F_{N}|}{|F_{N}|}=\lim_{N\rightarrow\infty}\frac{|BF_{N}\cap F_{N}|}{|F_{N}|}=\lim_{N\rightarrow\infty}\frac{|CF_{N}\cap F_{N}|}{|F_{N}|}=\lim_{N\rightarrow\infty}\frac{N^{2}(N-1)}{N^{3}}=1.\]
Since $A$, $B$, and $C$ are generators of $\mathcal{T}$, then $(F_N)_{N\in \mathbb{N}}$ is a left F{\o}lner
sequence of $\mathcal{T}$.
 Thus $\mathcal{T}$ is left amenable.
 On the other hand, we have
\begin{align*}
F_{N}A\cap F_{N}=&\{A^{\alpha_{1}}B^{\alpha_{2}}C^{\alpha_{3}}\in \mathcal{T}\ |\ 1\leq \alpha_{1}\leq N-1,\ 0\leq \alpha_{i}\leq N-1,\ i=2,3,\ \alpha_{3}\ \text{is even}\}\\
\cup&\{A^{\alpha_{1}}B^{\alpha_{2}}C^{\alpha_{3}}\in \mathcal{T}\ |\ 1\leq \alpha_{2}\leq N-1,\ 0\leq \alpha_{i}\leq N-1,\ i=1,3,\ \alpha_{3}\ \text{is odd}\},\\
F_{N}B\cap F_{N}=&\{A^{\alpha_{1}}B^{\alpha_{2}}C^{\alpha_{3}}\in \mathcal{T}\ |\ 1\leq \alpha_{2}\leq N-1,\ 0\leq \alpha_{i}\leq N-1,\ i=1,3,\ \alpha_{3}\ \text{is even}\}\\
\cup&\{A^{\alpha_{1}}B^{\alpha_{2}}C^{\alpha_{3}}\in \mathcal{T}\ |\ 1\leq \alpha_{1}\leq N-1,\ 0\leq \alpha_{i}\leq N-1,\ i=2,3,\ \alpha_{3}\ \text{is odd}\},\\
F_{N}C\cap F_{N}=&\{A^{\alpha_{1}}B^{\alpha_{2}}C^{\alpha_{3}}\in \mathcal{T}\ |\ 1\leq \alpha_{3}\leq N-1,\ 0\leq \alpha_{i}\leq N-1,\ i=1,2\}.
\end{align*}
Similarly, $(F_N)_{N\in \mathbb{N}}$ is also a right 
F{\o}lner sequence of $\mathcal{T}$. Thus $\mathcal{T}$ is both left and right amenable. We complete the proof.
\end{proof}
 \end{proposition}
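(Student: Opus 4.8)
The plan is to verify the three requirements of Proposition~\ref{ex_amenable semigroup T} in turn: the defining relations among $A,B,C$, the unique factorization property, and the two-sided amenability. First I would compute the matrix products directly: since $A,B$ are diagonal they commute, and one checks $AC=CB$ and $BC=CA$ by multiplying the $2\times 2$ matrices. These relations let every word in $A,B,C$ be rewritten by pushing all $C$'s to the right (each time a $C$ moves past an $A$ it becomes a $B$ past the $C$, and vice versa), so every element of $\mathcal{T}$ has the form $A^{\alpha_1}B^{\alpha_2}C^{\alpha_3}$ with $\alpha_i\in\mathbb{N}$; non-commutativity follows because, e.g., $AC\neq CA$.

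For uniqueness I would use the determinant as a numerical invariant. If $A^{\alpha_1}B^{\alpha_2}C^{\alpha_3}=A^{\beta_1}B^{\beta_2}C^{\beta_3}$, taking determinants gives $2^{\alpha_1+\alpha_2}(-6)^{\alpha_3}=2^{\beta_1+\beta_2}(-6)^{\beta_3}$; comparing signs forces $\alpha_3\equiv\beta_3\pmod 2$ and then comparing prime factorizations (the power of $3$ appears only through $(-6)^{\alpha_3}$) forces $\alpha_3=\beta_3$ and hence $\alpha_1+\alpha_2=\beta_1+\beta_2$. Cancelling $C^{\alpha_3}$ on the right (the matrices are invertible) leaves $A^{\alpha_1}B^{\alpha_2}=A^{\beta_1}B^{\beta_2}$, i.e.\ $A^{\alpha_1-\beta_1}=B^{\beta_2-\alpha_2}$ as matrices over $\mathbb{Z}$; since $A^k=\mathrm{diag}(1,2^k)$ and $B^l=\mathrm{diag}(2^l,1)$, this equality is possible only when $\alpha_1=\beta_1$ and $\alpha_2=\beta_2$. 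This also shows condition~(ii) in the definition of a unique factorization semigroup (a product of nonnegative powers equal to $e$ forces all exponents zero), so $\mathcal{T}$ is a unique factorization semigroup with basis $\{A,B,C\}$.

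For amenability I would exhibit an explicit F\o lner sequence: the "cube" $F_N=\{A^{\alpha_1}B^{\alpha_2}C^{\alpha_3}:0\leq\alpha_i\leq N-1\}$, which has $|F_N|=N^3$ by the uniqueness just proved. Using the commutation relations one computes $sF_N\cap F_N$ for $s\in\{A,B,C\}$: for $A$ and $B$ this is immediate (multiplying by $A$ shifts the $\alpha_1$-coordinate, by $B$ the $\alpha_2$-coordinate, each losing one "slab"), giving $|sF_N\cap F_N|=N^2(N-1)$; for $C$ one uses $CA=BC$ and $CB=AC$, so left multiplication by $C$ permutes the $A$- and $B$-exponents while shifting the $C$-exponent, again yielding $N^2(N-1)$. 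Hence $|sF_N\triangle F_N|/|F_N|\to 0$ for each generator, and since the generators generate $\mathcal{T}$ this extends to all $s\in\mathcal{T}$ (a standard induction using $|stF\triangle F|\le|s(tF\triangle F)|+|sF\triangle F|$ together with $|sG|=|G|$ from cancellativity), so $\mathcal{T}$ is left amenable by Frey's theorem. The right-handed computation is analogous but the case split is slightly more delicate: $F_N s\cap F_N$ for $s=C$ is still a clean slab, but for $s=A,B$ one must track the parity of $\alpha_3$ because $C^{\alpha_3}A$ equals $A C^{\alpha_3}$ or $BC^{\alpha_3}$ according as $\alpha_3$ is even or odd; partitioning $F_N s\cap F_N$ accordingly still gives total cardinality $N^2(N-1)$.

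The main obstacle is the right-amenability computation: keeping the bookkeeping of the parity-dependent swap of $A$- and $B$-exponents correct when intersecting $F_N s$ with $F_N$, and confirming that the two pieces of the partition are disjoint and jointly have the right size. Everything else is a direct finite computation or a standard F\o lner-net argument. One should also remember to record that $F_N$ really has $N^3$ elements (not fewer), which is exactly where the uniqueness of factorization is used, so the three parts of the proof genuinely interlock.
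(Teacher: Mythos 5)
Your proposal is correct and follows essentially the same route as the paper: the same relations $AB=BA$, $AC=CB$, $BC=CA$, the same determinant argument for uniqueness, and the same cubes $F_N=\{A^{\alpha_1}B^{\alpha_2}C^{\alpha_3}:0\le\alpha_i\le N-1\}$ as a two-sided F{\o}lner sequence, including the parity-of-$\alpha_3$ bookkeeping for right multiplication by $A$ and $B$. The only additions are minor elaborations (explicitly extending the F{\o}lner estimate from the generators to all of $\mathcal{T}$, and spelling out the sign/power-of-$3$ comparison), which the paper leaves implicit.
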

\section{The continuity of derivations}\label{sec_Automatically continuous of derivations}
In this section, we shall prove the following theorem.

\begin{theorem}\label{prop: continuity_derivations}
Derivations on the Banach algebras $\mathfrak{B}(\mathcal{S})$ and $\mathfrak{B}(F_n)$ are  continuous.
\end{theorem}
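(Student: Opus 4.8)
The plan is to use the standard separating-space technique for automatic continuity, together with the unique factorization structure on $\mathcal{S}$ and $\mathcal{F}_n$. Recall that for a derivation $D$ on a Banach algebra $\mathfrak{A}$ the \emph{separating space} $\mathfrak{S}(D)=\{x\in\mathfrak{A}:\ \text{there exist }a_k\to 0\text{ with }D(a_k)\to x\}$ is a closed linear subspace, and by the closed graph theorem $D$ is continuous if and only if $\mathfrak{S}(D)=\{0\}$. A further standard fact (essentially Johnson's lemma) is that for any $a\in\mathfrak{A}$ the two-sided multiplication $x\mapsto a\mathfrak{S}(D)$ and $\mathfrak{S}(D)a$ behaves controllably: in particular, for each $a$ the map $x\mapsto D(a)x+ax\,{-}\,(\text{shift})$ shows that $a\mathfrak{S}(D)\subseteq\mathfrak{S}(D)$ and $\mathfrak{S}(D)a\subseteq\mathfrak{S}(D)$, so $\mathfrak{S}(D)$ is a closed two-sided ideal. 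The real work is to show this ideal is zero by exploiting that $\mathfrak{B}(\mathcal{S})$ is generated by the isometries $L_s$.

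First I would set up a stability lemma: if $(a_k)$ is a sequence in $\mathfrak{B}(\mathcal{S})$ with $a_k\to 0$ and $D(a_k)\to x$, then for any $s\in\mathcal{S}$ one has $L_s x=\lim D(L_s a_k)-\lim L_{D(s)?}$— more precisely, using $D(L_s a_k)=L_s D(a_k)+D(L_s)a_k$ and $a_k\to 0$, we get $L_s x\in\mathfrak{S}(D)$; symmetrically $x L_s\in\mathfrak{S}(D)$. So $\mathfrak{S}(D)$ is invariant under left and right multiplication by every $L_s$, hence (by norm-density of the algebra they generate) is a closed two-sided ideal $\mathfrak{I}$ of $\mathfrak{B}(\mathcal{S})$. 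Next I would study the action of $D$ on the generators. Writing $D(L_s)$ and using $L_sL_t=L_{st}$ together with the unique factorization $s=X_{i_1}^{\alpha_1}\cdots X_{i_n}^{\alpha_n}$, the derivation identity forces $D$ on all of $\mathfrak{B}(\mathcal{S})$ to be determined by the countably many values $D(L_{X_j})$. The key quantitative point is a growth/compatibility estimate: because each $L_{X_j}$ is an isometry and the relations $X_jX_i=X_iX_{j+1}$ intertwine them, I expect to show that the separating space $\mathfrak{I}$ is annihilated on one side by a sufficiently deep product of generators, i.e. there is $s$ with $L_s\mathfrak{I}=0$ or $\mathfrak{I}L_s=0$; since each $L_s$ is an isometry (in particular injective, indeed bounded below) this forces $\mathfrak{I}=0$.

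The mechanism for producing such an $s$ is the total order $\preceq$ from Definition \ref{def:Total_order_on_ Thompson's_semigroup} and Lemma \ref{lem_property of total order of Thompson's semigroup}: one runs the classical ``Mittag-Leffler / decreasing sequence of ideals'' argument of Sinclair. Consider the descending chain $\mathfrak{I}\supseteq \overline{L_{X_0}\mathfrak{I}}\supseteq\overline{L_{X_0}^2\mathfrak{I}}\supseteq\cdots$; a Baire-category argument on the separating space (the stability lemma shows each $D|_{\text{these pieces}}$ has controlled separating behaviour) yields that the chain stabilizes, so $\overline{L_{X_0}^{m}\mathfrak{I}}=\overline{L_{X_0}^{m+1}\mathfrak{I}}$ for some $m$. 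Then feeding this back through the derivation identity and the fact that $L_{X_0}$ is an isometry whose range is complemented-ish (cokernel spanned by the $\delta_t$ with $X_0\nmid t$), one extracts that $\mathfrak{I}$ must vanish. For $\mathfrak{B}(\mathcal{F}_n)$ the same scheme runs with the generators $a_1,\dots,a_n$, using Lemma \ref{lem:Fn_total_order_property} and Proposition \ref{ex_free semigroups are unique factorization semigroups} in place of the Thompson-semigroup facts; in fact $\mathcal{F}_n$ is easier because the generators generate freely.

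The main obstacle I anticipate is the stabilization/Baire step: one must verify that the relevant chain of closed subspaces arising from iterating $L_{X_0}$ (or the $a_i$) interacts with $\mathfrak{S}(D)$ the way Sinclair's lemma requires — concretely, that the maps $x\mapsto D(L_{X_0}^k)$ contribute norm-bounded ``correction terms'' so that $\mathfrak{S}(D)$ restricted to each stage is again a separating space of an associated continuous-modulo-something map. This is exactly where the isometry property of the $L_s$ and the unique factorization (ensuring $L_{X_0}$ is bounded below and the quotient by its range is understood) are essential; routine but careful bookkeeping with Definition \ref{def:Index_on_Thompson's_semigroup} and the total order should close it. Everything else — that $\mathfrak{S}(D)$ is a closed ideal, that annihilation by an injective $L_s$ kills it — is standard.
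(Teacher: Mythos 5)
Your proposal is correct in outline but follows a genuinely different route from the paper. The paper never touches the separating space: it shows that $\mathfrak{B}(\mathcal{S})$ and $\mathfrak{B}(\mathcal{F}_n)$ contain no nonzero quasinilpotent elements --- for $L_f\neq 0$ the minimal element $X$ of the support of $f$ under the total order satisfies $(f\ast\cdots\ast f)(X^n)=f(X)^n$, so $r(L_f)\geq |f(X)|>0$ --- hence these algebras are semisimple (Lemma \ref{prop:lower_stable_semisimple}, Corollary \ref{cor:S_Fn_semisimple}), and continuity is then a one-line citation of the Johnson--Sinclair theorem \cite{JS}. You instead reprove automatic continuity directly via the separating space and Sinclair's stability lemma; this buys a more self-contained argument that, in the end, only needs additivity of $\mathsf{ind}_0$ rather than the full total-order machinery, while the paper's route is much shorter and yields semisimplicity as a by-product of independent interest. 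Two adjustments would make your sketch close. First, the step you single out as the main obstacle is actually the routine one: taking $T_n=R_n$ to be left multiplication by $L_{X_0}$, the map $DT_n-R_nD\colon a\mapsto D(L_{X_0})a$ is bounded, so Sinclair's stability lemma applies verbatim and $\overline{L_{X_0}^{k}\mathfrak{S}(D)}$ stabilizes. Second, the finishing move is not that some $L_s$ annihilates $\mathfrak{S}(D)$ --- for injective $L_s$ that statement is literally the desired conclusion $\mathfrak{S}(D)=0$, so it cannot be the mechanism. The correct finish: since $L_{X_0}$ is an isometry, $T\mapsto L_{X_0}T$ is isometric on $B(l^{2}(\mathcal{S}))$, hence each $L_{X_0}^{k}\mathfrak{S}(D)$ is already closed; stabilization plus injectivity then gives $\mathfrak{S}(D)=L_{X_0}\mathfrak{S}(D)=L_{X_0}^{k}\mathfrak{S}(D)$ for every $k$, so for $T\in\mathfrak{S}(D)$ each vector $T\delta_{s}$ is supported in $X_0^{k}\mathcal{S}$ for all $k$; additivity of $\mathsf{ind}_{0}$ shows no semigroup element is divisible by arbitrarily high powers of $X_0$, so $T\delta_{s}=0$ for all $s$ and $T=0$. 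The same argument with a single generator $a_{1}$ (and word length in place of $\mathsf{ind}_0$) handles $\mathfrak{B}(\mathcal{F}_n)$.
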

For the sake of proof, we firstly introduce the following definition.
\begin{definition}
A discrete semigroup $S$ is said to be \textbf{lower stable} if there is a total order
$``\preceq"$ on $S$ such that 

($\romannumeral1$) There exists a unique minimal element in each subset  of $S$ under the total order;

  ($\romannumeral2$) 
Let $u_i$  and $v_i$ ($1\leq i\leq n$) be $2n$
elements in $S$.
If $u_{i}\preceq v_{i}$ for each $1\leq i\leq n$, then $\Pi_{i=1}^{n}u_{i}\preceq \Pi_{i=1}^{n}v_{i}$. Moreover, the equality holds if and only if $u_{i}=v_{i}$  for each $1\leq i\leq n$.
\end{definition}
By Lemmas \ref{lem_property of total order of Thompson's semigroup} and \ref{lem:Fn_total_order_property}, we have that
 Thompson's semigroup $\mathcal{S}$ and the free semigroup $\mathcal{F}_n$ ($n\geq2$) are both lower stable.
 
\begin{definition} 
A Banach algebra $\mathcal{A}$  is said to be \textbf{semisimple} if its Jacobson radical $\mathcal{J}$ equals zero, where
\[\mathcal{J}=\bigcap_{maximal\ ideals\ of\
\mathcal{A}}\mathcal{I}=\bigcap_{maximal\ left\ ideals\ of\ \mathcal{A}}\mathcal{I}_{l}=
\bigcap_{maximal\ right\ ideals\ of\ \mathcal{A}}\mathcal{I}_{r}.\]
\end{definition}
The following lemma is a characterization of 
semisimple Banach algebras.
\begin{lemma}\label{lem_a characterization of semisimple}
Let $\mathcal{A}$ be a Banach algebra with the unit
$I$. If $\mathcal{A}$ contains no non-zero quasi-nilpotent
operators, then $\mathcal{A}$ is semisimple.
\begin{proof}
Let $T$ be a non-zero element in $\mathcal{A}$.
Let $\lambda$ be a non-zero spectrum  of $T$, then  
at least one of
$(\lambda I-T)\mathcal{A}$ and $\mathcal{A}(\lambda I-T)$ is properly contained in
 $\mathcal{A}$.
We may assume that the right ideal $(\lambda I-T)\mathcal{A}$ is properly contained in $\mathcal{A}$, then there exists a maximal right ideal  $I_{r}$ 
of $\mathcal{A}$ such that
$(\lambda I-T)\mathcal{A}\subseteq I_{r}\subset\mathcal{A}$. Thus $T$ does not belong to $I_{r}$. This implies $T$ is not in the Jacobson 
radical $\mathcal{J}$ of
$\mathcal{A}$. Consequently, we have $\mathcal{J}=0$. This completes the proof.
\end{proof}
\end{lemma}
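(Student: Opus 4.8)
The plan is to prove the contrapositive at the level of individual elements: I will show that every non-zero $T \in \mathcal{A}$ fails to lie in the Jacobson radical $\mathcal{J}$, which forces $\mathcal{J}=0$ and hence semisimplicity. So fix a non-zero $T$. Since by hypothesis $T$ is not quasi-nilpotent and the spectrum of any element of a complex unital Banach algebra is non-empty, $\sigma(T)$ must contain a non-zero point $\lambda$, and therefore $\lambda I - T$ is not invertible in $\mathcal{A}$.

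Next I would translate non-invertibility into a statement about one-sided ideals. In a unital Banach algebra an element is invertible precisely when it is simultaneously left- and right-invertible, and $x$ is right-invertible iff $x\mathcal{A}=\mathcal{A}$ while $x$ is left-invertible iff $\mathcal{A}x=\mathcal{A}$. Consequently the non-invertibility of $\lambda I - T$ forces at least one of the principal one-sided ideals $(\lambda I - T)\mathcal{A}$ and $\mathcal{A}(\lambda I - T)$ to be a proper subset of $\mathcal{A}$. I would treat the right-ideal case, the left case being entirely symmetric via the description of $\mathcal{J}$ as the intersection of all maximal left ideals. By Zorn's lemma the proper right ideal $(\lambda I - T)\mathcal{A}$ is contained in some maximal right ideal $I_r$.

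The crucial step is to verify $T \notin I_r$. Because $\mathcal{A}$ is unital, $\lambda I - T = (\lambda I - T)\cdot I \in (\lambda I - T)\mathcal{A} \subseteq I_r$; if we also had $T \in I_r$, then $\lambda I = (\lambda I - T) + T \in I_r$, and multiplying on the right by $\lambda^{-1} I$ (legitimate since $\lambda \neq 0$ and $I_r$ is a right ideal) would give $I \in I_r$, contradicting the properness of $I_r$. Hence $T \notin I_r$, and since $\mathcal{J} \subseteq I_r$ we conclude $T \notin \mathcal{J}$. As $T$ was an arbitrary non-zero element, $\mathcal{J}=0$, i.e. $\mathcal{A}$ is semisimple.

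The only genuinely non-routine point is this last step, and it turns on two features working together: that $\lambda$ is a non-zero spectral value (so that $\lambda^{-1}$ exists and lets us recover the unit inside $I_r$) and that $\mathcal{A}$ carries an identity (so that the principal ideal actually contains its own generator, and membership of $I$ collapses the maximal ideal). An alternative, quicker route would be to invoke the standard fact that in a unital Banach algebra every element of $\mathcal{J}$ is quasi-nilpotent, since $I - \mu T$ is invertible for all scalars $\mu$ whenever $T \in \mathcal{J}$, so that $\sigma(T)=\{0\}$; the hypothesis would then immediately yield $\mathcal{J}=0$. I prefer the ideal-theoretic version above because it relies only on the definition of $\mathcal{J}$ recorded in the statement rather than on the external spectral characterization of the radical.
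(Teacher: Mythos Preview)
Your argument is correct and follows essentially the same route as the paper's proof: pick a non-zero spectral value $\lambda$, obtain a proper one-sided principal ideal generated by $\lambda I - T$, enlarge it to a maximal one-sided ideal, and observe that $T$ cannot lie in it. You have in fact supplied more detail than the paper at the key point, spelling out why $T\in I_r$ would force $I\in I_r$ via $\lambda\neq 0$.
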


\begin{lemma}\label{prop:lower_stable_semisimple}
Let $S$ be a lower stable discrete semigroup and $\mathfrak{B}(S)$ be the Banach algebra generated by
$\{L_s:\ s\in S\}$ in $B(l^2(S))$. Then $\mathfrak{B}(S)$ is semisimple.
\begin{proof}
Let $L_f$ be a non-zero element in $\mathfrak{B}(S)$. We claim that the spectral radius  $r(L_f)>0$. 
By the definition of lower stable semigroup, there is a unique minimal element $X$ of the subset
$\{X\in S:f(X)\neq0\}$ under the total order. Then we have 
\[\underbrace{f\ast\cdots \ast f}_{n}(X^{n})=f(X)^{n}\] for each $n\geq1$. Therefore,
\begin{align*}
r(L_{f})=lim_{n\rightarrow\infty}\|L_{f}^{n}\|^{1/n}&\geq lim_{n\rightarrow\infty}\|\underbrace{f\ast\cdots \ast f}_{n}\|_{2}^{1/n}\\
&\geq lim_{n\rightarrow\infty}|\underbrace{f\ast\cdots \ast f}_{n}(X^{n})|^{1/n}=|f(X)|>0.
\end{align*}

By Lemma \ref{lem_a characterization of semisimple}, we obtain that $\mathfrak{B}(S)$ is semisimple.
\end{proof}
\end{lemma}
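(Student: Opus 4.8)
The plan is to deduce the result directly from Lemma \ref{lem_a characterization of semisimple}. First I would record that $\mathfrak{B}(S)$ is unital: since $S$ carries a unit $e$, the operator $L_e$ is the identity $I$ on $l^2(S)$ (because $\delta_e \ast g = g$), and $L_e \in \mathfrak{B}(S)$. By Lemma \ref{lem_a characterization of semisimple} it then suffices to show that $\mathfrak{B}(S)$ contains no non-zero quasi-nilpotent element, i.e. that every non-zero $T \in \mathfrak{B}(S)$ has strictly positive spectral radius $r(T) > 0$. Because $\mathfrak{B}(S) \subseteq \mathcal{L}(S)$, each such $T$ is a left convolution operator $T = L_f$ with $f = T\delta_e \in l^2(S)$, and $f \neq 0$ since $T \neq 0$; so the task reduces to proving $r(L_f) > 0$ for every non-zero $L_f$.

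Next I would single out the right test vector and turn the spectral radius formula into a lower bound. As $f \neq 0$, the set $\{X \in S : f(X) \neq 0\}$ is a non-empty subset of $S$, so by condition (i) of lower stability it has a unique minimal element $X$ under $\preceq$. Writing $f^{\ast n} := L_f^n \delta_e$ for the $n$-fold convolution, the crux is the identity $f^{\ast n}(X^n) = f(X)^n$ for all $n \geq 1$. Granting this, I would estimate
\[ \|L_f^n\| \geq \|L_f^n \delta_e\|_2 = \|f^{\ast n}\|_2 \geq |f^{\ast n}(X^n)| = |f(X)|^n, \]
using $\|\delta_e\|_2 = 1$ and that the $l^2$-norm dominates any single coordinate. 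The spectral radius formula then yields $r(L_f) = \lim_{n} \|L_f^n\|^{1/n} \geq |f(X)| > 0$, which completes the reduction.

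The main obstacle, and the only genuinely non-formal step, is the identity $f^{\ast n}(X^n) = f(X)^n$; this is exactly where lower stability is used in full force. Expanding, $f^{\ast n}(X^n) = \sum_{u_1 \cdots u_n = X^n} f(u_1) \cdots f(u_n)$, and a summand is non-zero only when each $u_i$ lies in $\{X : f(X) \neq 0\}$, hence satisfies $X \preceq u_i$ by minimality of $X$. Applying condition (ii) of lower stability to the inequalities $X \preceq u_i$ gives $X^n = \prod_{i=1}^n X \preceq \prod_{i=1}^n u_i = X^n$, so equality holds throughout; the equality clause of (ii) then forces $u_i = X$ for every $i$. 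Thus the diagonal term $u_1 = \cdots = u_n = X$ is the unique non-vanishing contribution—so the sum has a single term and no convergence issue arises—and its value is precisely $f(X)^n$. With this identity established, the estimate above gives $r(L_f) > 0$ for every non-zero $L_f$, and Lemma \ref{lem_a characterization of semisimple} yields that $\mathfrak{B}(S)$ is semisimple.
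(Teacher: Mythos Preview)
Your proof is correct and follows essentially the same approach as the paper: pick the minimal element $X$ of the support of $f$, use lower stability to show $f^{\ast n}(X^n)=f(X)^n$, bound the spectral radius below by $|f(X)|$, and invoke Lemma~\ref{lem_a characterization of semisimple}. Your version simply spells out more explicitly why the convolution identity holds (via the equality clause of condition~(ii)) and why $\mathfrak{B}(S)$ is unital, details the paper leaves implicit.
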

\begin{corollary}\label{cor:S_Fn_semisimple}
The Banach algebras $\mathfrak{B}(\mathcal{S})$
and $\mathfrak{B}(\mathcal{F}_n)$ are semisimple.
\end{corollary}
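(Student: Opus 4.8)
The plan is to deduce this immediately from Lemma \ref{prop:lower_stable_semisimple} together with the structural facts already established for the two semigroups. First I would recall that, by definition, a discrete semigroup is lower stable exactly when it carries a total order satisfying the two axioms: every non-empty subset has a unique minimal element, and the order is strictly multiplicative in the sense that $u_i\preceq v_i$ for $1\le i\le n$ forces $\prod u_i\preceq\prod v_i$ with equality only when all $u_i=v_i$. For Thompson's semigroup $\mathcal{S}$ these are precisely the two conclusions of Lemma \ref{lem_property of total order of Thompson's semigroup}, applied to the total order $\preceq$ of Definition \ref{def:Total_order_on_ Thompson's_semigroup}; for the free semigroup $\mathcal{F}_n$ ($n\ge2$) they are precisely the two conclusions of Lemma \ref{lem:Fn_total_order_property}, applied to the total order of Definition \ref{def:Total_order_on_the_free_semigroup}. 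Hence both $\mathcal{S}$ and $\mathcal{F}_n$ are lower stable.

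Then I would simply invoke Lemma \ref{prop:lower_stable_semisimple}, which says that for any lower stable discrete semigroup $S$ the Banach algebra $\mathfrak{B}(S)$ generated by $\{L_s: s\in S\}$ in $B(l^2(S))$ is semisimple. Applying it with $S=\mathcal{S}$ and with $S=\mathcal{F}_n$ yields the statement.

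I do not expect any genuine obstacle here: the substantive work has already been done. The point of lower stability is that for a non-zero $L_f\in\mathfrak{B}(S)$, the minimal element $X$ of the support of $f$ satisfies $(\underbrace{f\ast\cdots\ast f}_{n})(X^n)=f(X)^n$, which forces $r(L_f)\ge|f(X)|>0$, so $\mathfrak{B}(S)$ has no non-zero quasi-nilpotents and is semisimple by Lemma \ref{lem_a characterization of semisimple}; this argument is carried out once and for all in the proof of Lemma \ref{prop:lower_stable_semisimple}, and the verification that the explicit orders on $\mathcal{S}$ and $\mathcal{F}_n$ are strictly multiplicative is exactly the content of Lemmas \ref{lem_property of total order of Thompson's semigroup} and \ref{lem:Fn_total_order_property}. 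The corollary is therefore a one-line consequence. Should one prefer a self-contained proof avoiding the abstraction, one could instead repeat the spectral-radius estimate of Lemma \ref{prop:lower_stable_semisimple} verbatim in each case using the concrete orders, but routing it through the lemma is cleaner.
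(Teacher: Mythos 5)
Your proposal is correct and follows exactly the paper's route: the paper also deduces the corollary by noting that Lemmas \ref{lem_property of total order of Thompson's semigroup} and \ref{lem:Fn_total_order_property} show $\mathcal{S}$ and $\mathcal{F}_n$ are lower stable, and then applying Lemma \ref{prop:lower_stable_semisimple}. No gaps; the corollary is indeed the immediate consequence you describe.
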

The semigroup $\mathcal{T}$ in Example
\ref{ex_amenable semigroup T} may not be lower stable. However, the Banach algebra  $\mathfrak{B}(\mathcal{T})$ is
 semisimple. The proof is similar with that of Proposition
 \ref{prop:lower_stable_semisimple}, we omit it here.
 Now, we are ready to prove Proposition \ref{prop: continuity_derivations}.
\begin{proof}[Proof of Proposition \ref{prop: continuity_derivations}]
Theorem 4.1 of \cite{JS} states that derivations on a semisimple Banach algebra are continuous, then by Corollary \ref{cor:S_Fn_semisimple} 
we can obtain the conclusion.
\end{proof}
\section{The first cohomology group of $\mathfrak{B}(\mathcal{S})$}\label{sec_The first cohomology group}
A derivation $D$ on a Banach algebra $\mathfrak{B}\subseteq B(\mathcal{H})$ is said to be \textbf{spatial} if there exists a bounded operator $T$ in $B(\mathcal{H})$ such that 
$D(A)=TA-AT$ for each $A$ in $\mathfrak{B}$.
 In \cite{Kad}, Kadison proved that all derivations on a $C^{*}$-algebra are spatial. In this section, we will prove that all derivations on the Banach algebra $\mathfrak{B}(\mathcal{S})$ are spatial and are induced by bounded operators in $\mathcal{L}(\mathcal{S})$ (Corollary \ref{cor:derivation_spatial}). Moreover, the first continuous cohomology group of $\mathfrak{B}(\mathcal{S})$
with coefficients in $\mathcal{L}(\mathcal{S})$ is
shown to be zero (Theorem \ref{thm_the first cohomology is zero}).

The Hilbert space $\mathcal{H}$ is $l^2(\mathcal{S})$. Recall that $\mathfrak{B}(\mathcal{S})$ is the Banach algebra generated by $\{L_s:s\in \mathcal{S} \}$ in $B(\mathcal{H})$ and $\mathcal{L}(\mathcal{S})=\left\{L_{f}\in B(\mathcal{H}):f\in \mathcal{H}\right\}$. We use  $\sum_{X\in \mathcal{S}}f(X)X$ and $\sum_{X\in \mathcal{S}}\overline{f(X)}X^{*}$ to denote $L_{f}$ and its adjoint operator for convenience. It is clear that
$XX^{*}$ is the projection from $\mathcal{H}$ onto the closure of the
subspace span$\{\delta_Y:\ Y\in \mathcal{S},\ X|Y\}$ and $X^{*}X=I$ (the identity operator). Recall that  ``|" is the partial order of Thompson's semigroup  introduced in  Section \ref{sec_Unique factorization semigroups} and  ``$X|Y$" means that there exists a $Z$ in $\mathcal{S}$ such that $Y=XZ$.

Let $(\mathbb{N},+)$ be the additive semigroup of natural numbers. 
The notation $\beta \mathbb{N}$ denotes the maximal ideal space of $l^{\infty}(\mathbb{N})$, and
the elements in $\beta \mathbb{N} \setminus\mathbb{N}$ are called \textbf{free ultrafilters}. 
Let $\omega\in \beta\mathbb{N} \setminus \mathbb{N}$ be a given free ultrafilter. For any $n\in\mathbb{N} $ and any $f$ in
$l^{\infty}(\mathbb{N})$, we define $E_n(f) =\frac{1}{n} \sum_{i=0}^{n-1} f(i)$. Then, for each given $f$, the function $n\rightarrow E_n(f)$ gives
rise to another function in $l^{\infty}(\mathbb{N})$.
By Gelfand-Naimark theorem, we have $l^{\infty}(\mathbb{N})\cong C(\beta\mathbb{N})$. Thus $E_n(f)$ is a continuous function on $\beta\mathbb{N}$.
 The limit of $E_n(f)$ at $\omega$ is denoted by $E_{\omega}(f)$ or the integral $\int_\mathbb{N} f(n) dE_{\omega}(n)$. Then $E_{\omega}$ is an
invariant mean defined on $l^{\infty}(\mathbb{N})$,
 i.e., $\int_\mathbb{N} f(n) dE_{\omega}(n)=\int_\mathbb{N} f(n+m) dE_{\omega}(n)$ for each $m\geq1$,
  satisfying $E_{\omega}(f)=\lim_{n\rightarrow\infty}f(n)$ if the limit exists.
Let $D$ be a continuous derivation from the Banach
algebra $\mathfrak{B}(\mathcal{S})$ into $\mathcal{L}(\mathcal{S})$. For each $\xi,\eta\in \mathcal{H}$, we define
\begin{align}\label{averaging for $X_0$}
\langle A\xi,\eta\rangle:=\int_\mathbb{N} \langle (X_{0}^{*})^{n}D(X_{0}^{n+1})\xi,\eta \rangle dE_{\omega}(n).
\end{align}
Then $A$ is a bounded linear operator  on $\mathcal{H}$. Moreover, we have
\begin{align*}
\langle A\xi,\eta\rangle&=\int \langle (X_{0}^{*})^{n+1}D(X_{0}^{n+2})\xi,\eta \rangle dE_{\omega}(n)\\
&=\int \langle D(X_{0})\xi,\eta \rangle dE_{\omega}(n)+
\int \langle (X_{0}^{*})^{n+1}D(X_{0}^{n+1})X_0\xi,\eta \rangle dE_{\omega}(n)\\
&=\langle D(X_{0})\xi,\eta \rangle+
\langle X_{0}^{*}AX_0\xi,\eta \rangle,
\end{align*}
and it follows that
\begin{align}\label{*spatial for $X_0$}
D(X_{0})=A-X_{0}^{*}AX_0.
\end{align}
Similarly, we define
\begin{align}\label{averaging for $X_1$}
\langle B\xi,\eta\rangle:=\int_\mathbb{N} \langle (X_{1}^{*})^{n}D(X_{1}^{n+1})\xi,\eta \rangle dE_{\omega}(n),
\end{align}
then we have $B \in B(\mathcal{H})$ and
\begin{align}\label{*spatial for $X_1$}
D(X_{1})=B-X_{1}^{*}BX_1.
\end{align}
\subsection{The case of $X_0$}
\label{subsec_The case of $X_0$}
In this subsection, we will prove the following lemma.
\begin{lemma}\label{lem:spatial for $X_0$}
There exists some $\widehat{A}$ in $\mathcal{L}(\mathcal{S})$ such that
$D(X_0)=\widehat{A}X_0-X_0\widehat{A}$.
\begin{proof}
Let
 \[D(X_{0})=\sum_{X_{0}|X}f(X)X+\sum_{X_{0}\nmid X}f(X)X.\] 
 By the fact that $X_{1}X_{0}=X_{0}X_{2}$, we have
  \[ D(X_{1})X_{0}+X_{1}D(X_{0})=X_{0}D(X_{2})+D(X_{0})X_{2}.\]
  It follows that
  \[X_{1}\sum_{X_{0}\nmid X}f(X)X=\sum_{X_{0}\nmid X}f(X)XX_{2}.\]
Since   $X_{1}X\prec XX_{2}$ whenever
  $X_{0}\nmid X$,  hence $f(X)=0$ in this case. Therefore, 
  \[D(X_0)=\sum_{X_{0}|X}f(X)X=X_{0}L_{f_1}\] 
  for some $L_{f_1}$ in $\mathcal{L}(\mathcal{S})$.
By further computations, we can obtain that 
$D(X_0^n)=X_0^nL_{f_n}$ for some $L_{f_n}$ in $\mathcal{L}(\mathcal{S})$. We define 
\begin{align*}
\langle \widehat{A}\xi,\eta\rangle&:=-\int_\mathbb{N} \langle (X_{0}^{*})^{n+1}D(X_{0}^{n+1})\xi,\eta \rangle dE_{\omega}(n),
\end{align*}
then $\widehat{A}\in B(\mathcal{H})$.
For each $T$ in $\mathcal{R}(\mathcal{S})$, we have
\begin{align*}
\langle T\widehat{A}\xi,\eta\rangle&=-\int_\mathbb{N} \langle (X_{0}^{*})^{n+1}D(X_{0}^{n+1})\xi,T^*\eta \rangle dE_{\omega}(n)\\
&=-\int_\mathbb{N} \langle (X_{0}^{*})^{n+1}D(X_{0}^{n+1})T\xi, \eta \rangle dE_{\omega}(n)\\
&=\langle \widehat{A}T\xi,\eta\rangle.
\end{align*}
It follows that $\widehat{A}\in\mathcal{R}(\mathcal{S})'=\mathcal{L}(\mathcal{S})$.
By equation (\ref{averaging for $X_0$}), we have
\begin{align*}
\langle A\xi,\eta\rangle&=\int_\mathbb{N} \langle (X_{0}^{*})^{n}D(X_{0}^{n+1})\xi,\eta \rangle dE_{\omega}(n)\\
&=\int_\mathbb{N} \langle (X_{0}^{*})^{n+1}D(X_{0}^{n+1})\xi,X_0^*\eta \rangle dE_{\omega}(n)\\
&= \langle -\widehat{A}\xi,X_0^*\eta \rangle,
\end{align*}
  which implies $A=-X_0\widehat{A}$. Then  by
  equation (\ref{*spatial for $X_0$}), we have
$D(X_0)=\widehat{A}X_0-X_0\widehat{A}$.
This completes the proof.
  \end{proof}
\end{lemma}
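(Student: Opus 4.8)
The plan is to first determine the precise form of $D(X_0)$, then propagate that form to all powers $X_0^{n}$, and finally carry out the $E_\omega$-average \emph{inside} $\mathcal{L}(\mathcal{S})$ so that the resulting operator is a left convolution operator. For the first step, since $D$ takes values in $\mathcal{L}(\mathcal{S})$ write $D(X_0)=\sum_{X\in\mathcal{S}}f(X)X$ with $f\in l^{2}(\mathcal{S})$, and split the sum according to whether $X_0\mid X$. Applying $D$ to the defining relation $X_1X_0=X_0X_2$ of $\mathcal{S}$ and using the bimodule identity gives $D(X_1)X_0+X_1D(X_0)=X_0D(X_2)+D(X_0)X_2$. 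Both $D(X_1)X_0$ and $X_0D(X_2)$ are supported on $\{Z:X_0\mid Z\}$, and since $\mathsf{ind}_0(X_1X)=\mathsf{ind}_0(X)=\mathsf{ind}_0(XX_2)$ we have $X_0\mid X_1X\iff X_0\mid X\iff X_0\mid XX_2$; projecting onto the part not divisible by $X_0$ therefore yields $\sum_{X_0\nmid X}f(X)(X_1X)=\sum_{X_0\nmid X}f(X)(XX_2)$.

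The crux is to kill this identity using the total order. A short computation with normal forms shows $X_1X\prec XX_2$ whenever $X_0\nmid X$: the total indices agree, $\mathsf{ind}_0$ agrees, and $\mathsf{ind}_1(X_1X)=\mathsf{ind}_1(X)+1>\mathsf{ind}_1(X)=\mathsf{ind}_1(XX_2)$ (right multiplication by $X_2$ does not touch the $X_0$ or $X_1$ exponents), so $X_1X\prec XX_2$ by clause (iii) of the definition of $\preceq$. If $f$ were nonzero at some $X$ with $X_0\nmid X$, let $Y$ be the $\preceq$-minimal such element. By strict monotonicity of the order under left multiplication (Lemma~\ref{lem_property of total order of Thompson's semigroup}(ii)), $X_1Y$ is the unique $\preceq$-minimal element of the support of the left-hand side and occurs there with coefficient $f(Y)\neq0$, whereas every support element of the right-hand side is $\succeq YX_2\succ X_1Y$; this contradicts equality of the two sides. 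Hence $f$ is supported on $\{X_0\mid X\}$, i.e. $D(X_0)=X_0L_{f_1}$ for some $L_{f_1}\in\mathcal{L}(\mathcal{S})$. An induction on $n$ (now using $X_0^{n}\mid XX_0^{n}$ for every $X$, so that $D(X_0)X_0^{n}$ and $X_0D(X_0^{n})$ are both supported on $\{Z:X_0^{n}\mid Z\}$) upgrades this to $D(X_0^{n})=X_0^{n}L_{f_n}$ with $L_{f_n}\in\mathcal{L}(\mathcal{S})$ for all $n$.

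With this in hand the averaging is routine. Because $X_0^{*}X_0=I$, one has $(X_0^{*})^{n+1}D(X_0^{n+1})=L_{f_{n+1}}\in\mathcal{L}(\mathcal{S})$, with $\|L_{f_{n+1}}\|\le\|D\|$ since $D$ is continuous (Proposition~\ref{prop: continuity_derivations}) and $X_0$ is an isometry. Define $\widehat{A}$ by $\langle\widehat{A}\xi,\eta\rangle=-\int_{\mathbb{N}}\langle(X_0^{*})^{n+1}D(X_0^{n+1})\xi,\eta\rangle\,dE_\omega(n)$; this is a bounded operator, and since every operator under the integral lies in $\mathcal{L}(\mathcal{S})=\mathcal{R}(\mathcal{S})'$, moving an arbitrary $T\in\mathcal{R}(\mathcal{S})$ through the integral shows $T\widehat{A}=\widehat{A}T$, so $\widehat{A}\in\mathcal{R}(\mathcal{S})'=\mathcal{L}(\mathcal{S})$. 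Finally $D(X_0^{n+1})=X_0^{n+1}L_{f_{n+1}}$ gives $(X_0^{*})^{n}D(X_0^{n+1})=X_0\big((X_0^{*})^{n+1}D(X_0^{n+1})\big)$, so comparing with the formula $\langle A\xi,\eta\rangle=\int_{\mathbb{N}}\langle(X_0^{*})^{n}D(X_0^{n+1})\xi,\eta\rangle\,dE_\omega(n)$ yields $A=-X_0\widehat{A}$; substituting into $D(X_0)=A-X_0^{*}AX_0$ and using $X_0^{*}X_0=I$ gives $D(X_0)=\widehat{A}X_0-X_0\widehat{A}$.

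The part I expect to be the real obstacle is the order argument in the second paragraph. The naive $E_\omega$-average produces only $A\in B(\mathcal{H})$; it is precisely the combinatorial inequality $X_1X\prec XX_2$ for $X_0\nmid X$, together with the strict monotonicity in Lemma~\ref{lem_property of total order of Thompson's semigroup}(ii), that forces $D(X_0)$ into the form $X_0L_{f_1}$, and this special form is exactly what makes the conjugations $(X_0^{*})^{n+1}X_0^{n+1}=I$ collapse and keeps $\widehat{A}$ inside $\mathcal{L}(\mathcal{S})$ rather than merely in $B(\mathcal{H})$.
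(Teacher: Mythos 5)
Your proposal is correct and follows essentially the same route as the paper: the relation $X_1X_0=X_0X_2$ plus the total order forces $D(X_0)=X_0L_{f_1}$, induction gives $D(X_0^n)=X_0^nL_{f_n}$, and the $E_\omega$-average defines $\widehat{A}\in\mathcal{R}(\mathcal{S})'=\mathcal{L}(\mathcal{S})$ with $A=-X_0\widehat{A}$, whence $D(X_0)=\widehat{A}X_0-X_0\widehat{A}$. You even supply slightly more detail than the paper (the $\mathsf{ind}_0$/$\mathsf{ind}_1$ computations and the minimal-element argument); the only nitpick is that in the induction step the two terms of $D(X_0^{n+1})$ are in fact supported on multiples of $X_0^{n+1}$, not just $X_0^{n}$, which is what the conclusion needs.
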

\subsection{The case of $X_1$}\label{subsec_The case of $X_1$}
The following lemma is the main conclusion of this subsection.
\begin{lemma}\label{spatial for $X_1$}
There exists some $\widehat{B}$ in $\mathcal{L}(\mathcal{S})$ such that  
  $D(X_{1})=\widehat{B}X_{1}-X_{1}\widehat{B}$ .
\end{lemma}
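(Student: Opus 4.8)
The plan is to mimic the strategy that succeeded for $X_0$ in Lemma \ref{lem:spatial for $X_0$}, but now working relative to $X_0$ rather than relative to the unit. First I would write $D(X_1)=\sum_{X}f(X)X$ and split the sum according to whether $X_0\mid X$, whether $X_1\mid X$, and so on. The key structural input is the family of braid-type relations $X_jX_i=X_iX_{j+1}$ for $i<j$; applying $D$ to a well-chosen relation produces a linear identity among the pieces of $D(X_1)$, $D(X_0)$, $D(X_2)$, etc., and then the total-order estimate of Lemma \ref{lem_property of total order of Thompson's semigroup}(ii) forces most of the coefficients to vanish. Concretely, I expect to derive from $X_1X_0=X_0X_2$ (together with $X_2X_0=X_0X_3$, $X_2X_1=X_1X_3$, $\dots$) constraints showing that the ``bad'' part of $D(X_1)$ — the part not divisible by $X_0$ on the left, or not of the form needed — must be zero, because the relevant products are strictly ordered, exactly as ``$X_1X\prec XX_2$ whenever $X_0\nmid X$'' was used before. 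The upshot should be that $D(X_1)=X_1 L_{g}$ for some $L_g\in\mathcal{L}(\mathcal{S})$, and, iterating on powers, $D(X_1^n)=X_1^n L_{g_n}$ with $L_{g_n}\in\mathcal{L}(\mathcal{S})$.

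Given that, I would define, in parallel with the proof of Lemma \ref{lem:spatial for $X_0$},
\[
\langle \widehat{B}\xi,\eta\rangle:=-\int_\mathbb{N}\langle (X_1^*)^{n+1}D(X_1^{n+1})\xi,\eta\rangle\, dE_{\omega}(n),
\]
which is a bounded operator on $\mathcal{H}$. To see $\widehat{B}\in\mathcal{L}(\mathcal{S})$ I would check that $\widehat{B}$ commutes with every $T\in\mathcal{R}(\mathcal{S})$, using that each $D(X_1^{n+1})$ has the form $X_1^{n+1}L_{g_{n+1}}\in\mathcal{L}(\mathcal{S})$ and that $(X_1^*)^{n+1}X_1^{n+1}=I$, so $(X_1^*)^{n+1}D(X_1^{n+1})=L_{g_{n+1}}\in\mathcal{L}(\mathcal{S})$ lies in the commutant $\mathcal{R}(\mathcal{S})'$; the integral against $E_{\omega}$ of a bounded net in $\mathcal{L}(\mathcal{S})$ stays in $\mathcal{L}(\mathcal{S})$ since $\mathcal{L}(\mathcal{S})$ is weak-closed. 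Then, just as before, comparing with the defining formula (\ref{averaging for $X_1$}) for $B$ and using the invariance property $\int\phi(n)\,dE_{\omega}(n)=\int\phi(n+1)\,dE_{\omega}(n)$ gives $B=-X_1\widehat{B}$, and equation (\ref{*spatial for $X_1$}), $D(X_1)=B-X_1^*BX_1$, rearranges to $D(X_1)=\widehat{B}X_1-X_1\widehat{B}$.

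The main obstacle I anticipate is the first step: isolating and killing the ``bad'' part of $D(X_1)$. For $X_0$ this was a one-line consequence of a single relation because $X_0$ sits at the bottom of every normal form; for $X_1$ the relevant relations involve $X_0$ on one side, so the vanishing argument has to be run conditionally on $X_0\mid X$, and within the case $X_0\mid X$ one must further analyze whether $X_1\mid X$ using Lemma \ref{lem:partial_order_S_property}. I expect one needs both the relation $X_1X_0=X_0X_2$ (to handle the part with $X_0\nmid X$, via $X_1X\prec XX_2$) and relations such as $X_2X_1=X_1X_3$ applied after conjugating by $X_0$, to pin down the internal structure; the total-order monotonicity in Lemma \ref{lem_property of total order of Thompson's semigroup}(ii), together with the fact that $\{X_n\}$ is a basis, should make each of these comparisons strict and hence force the unwanted coefficients to zero. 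Once $D(X_1)=X_1L_g$ is established, the averaging argument is essentially identical to the $X_0$ case and presents no new difficulty.
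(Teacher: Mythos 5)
Your reduction rests on the claim that $D(X_1)=X_1L_g$ (hence $D(X_1^n)=X_1^nL_{g_n}$), i.e.\ that the support of $D(X_1)$ consists only of elements left-divisible by $X_1$. That claim is false, and everything after it depends on it. Consider the inner (hence continuous) derivation $D=[X_0,\cdot\,]$ from $\mathfrak{B}(\mathcal{S})$ into $\mathcal{L}(\mathcal{S})$: then $D(X_1)=X_0X_1-X_1X_0=X_0X_1-X_0X_2$, and $X_1\nmid X_0X_1$ (for instance $X_1^{-1}X_0X_1=X_0X_2^{-1}X_1=X_0X_1X_3^{-1}\notin\mathcal{S}$), so $D(X_1)$ is not of the form $X_1L_g$. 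What the braid relations actually give is only Lemma \ref{lem_p_1_conclusion}: using $X_1X_3=X_2X_1$ one kills the part of $D(X_1)$ supported on $\{X:\ X_0\nmid X,\ X_1\nmid X\}$, but a part with $X_0\mid X$ and $X_1\nmid X$ can genuinely survive, as the example shows. Consequently $(X_1^*)^{n+1}D(X_1^{n+1})$ need not lie in $\mathcal{L}(\mathcal{S})$, so your argument that $\widehat{B}\in\mathcal{R}(\mathcal{S})'=\mathcal{L}(\mathcal{S})$ breaks down, and the identity $B=-X_1\widehat{B}$ fails as well, since it needs the range projection $X_1X_1^*$ to act as the identity on $(X_1^*)^{n}D(X_1^{n+1})$ --- which is again the false divisibility claim in disguise.

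The paper has to work much harder precisely because of this. First (Lemma \ref{lem_p_1_*_derivation_complete}) it shows that the weak limit $T$ of $(X_1^*)^{m-1}D(X_1^m)$ is a left convolution operator; the delicate case is $h_2h_1^{-1}\notin\mathcal{S}$ but $X=X_1^{n+1}h_2h_1^{-1}X_1^{-n-1}\in\mathcal{S}$, where one must prove $\sum_{j=0}^{\mathsf{ind}(X)}f(X_1^jXX_1^{-j})=0$; this is done via Lemma \ref{lem:partial_order_S_property} and an $\ell^2$ estimate, packing $m/2$ copies of that sum into $\|D(X_1^m)\delta_e\|_2^2\le\|D\|^2$ and letting $m\to\infty$. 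This yields $D(X_1)=B-X_1^*BX_1$ with $B=L_g\in\mathcal{L}(\mathcal{S})$, but with no control yet on the support of $B$. Only then, combining Lemma \ref{lem_p_1_conclusion} with minimality arguments in the total order (after normalizing $g(e)=0$), does one show that the components of $g$ with $X_1\nmid X$ vanish, so that $B=X_1X_1^*B$, $\widehat{B}:=-X_1^*B\in\mathcal{L}(\mathcal{S})$, and $D(X_1)=\widehat{B}X_1-X_1\widehat{B}$. The support analysis is carried out for the averaged operator $B$, not for $D(X_1)$ itself; neither this step nor the $\ell^2$ argument appears in your proposal, so the proof is incomplete at its central point.
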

We need Lemmas \ref{lem_p_1_*_derivation_complete} and \ref{lem_p_1_conclusion} to obtain
Lemma \ref{spatial for $X_1$}.
  \begin{lemma}\label{lem_p_1_*_derivation_complete}
  $D(X_{1})=B-X_{1}^{*}BX_{1}$ for some
    $B$ in $\mathcal{L}(\mathcal{S})$.
 \begin{proof} 
 Let $D(X_{1})=L_{f}$ in $\mathcal{L}(\mathcal{S})$, then by the continuity of $D$ we have $f(X_{1}^{n})=0$ for each $n\in\mathbb{N}$.
  For each $m\geq1$, we have 
     $(X_{1}^{*})^{m-1}D(X_{1}^{m})=\sum_{i=0}^{m-1}(X_1^*)^iL_fX_1^{i}$
     and
$ \|(X_{1}^{*})^{m-1}D(X_{1}^{m})\|\leq\|D\|.$
Let $h_{1}$ and $ h_{2}$ be two elements in $\mathcal{S}$. If $h_{2}h_{1}^{-1}= X_{1}^{k}$ for some $k\in \mathbb{N}$, then
 \[\lim_{n,m\rightarrow\infty}\left\langle\bigg((X_{1}^{*})^{m-1}D(X_{1}^{m})-(X_{1}^{*})^{n-1}D(X_{1}^{n})\bigg)\delta_{h_{1}},\delta_{h_{2}}\right\rangle=\lim_{n\rightarrow\infty}\sum_{i=n+1}^{m}f(X_{1}^{i}h_{2}h_{1}^{-1}X_{1}^{-i})=0.\] 
If $h_{2}h_{1}^{-1}\neq X_{1}^{k}$ for any $k\in \mathbb{N}$, then
   $X_{1}^{i}h_{2}h_{1}^{-1}X_{1}^{-i}\notin \mathcal{S}$  when $i$ is sufficiently large.
   Therefore,
  \[\lim_{n,m\rightarrow\infty}\left\langle\bigg((X_{1}^{*})^{m-1}D(X_{1}^{m})-(X_{1}^{*})^{n-1}D(X_{1}^{n})\bigg)\delta_{h_{1}},\delta_{h_{2}}\right\rangle=\lim_{n\rightarrow\infty}\sum_{i=n+1}^{m}f(X_{1}^{i}h_{2}h_{1}^{-1}X_{1}^{-i})=0.\]  
We define 
\[\langle T\delta_{h_1},\delta_{h_2}\rangle:=   \lim_{m\rightarrow\infty}\big\langle(X_{1}^{*})^{m-1}D(X_{1}^{m})\delta_{h_{1}},\delta_{h_{2}}\big\rangle.\]
Then $T$ is the weak-topology limit of  $(X_{1}^{*})^{m-1}D(X_{1}^{m})$ in $B(\mathcal{H})$. We have the following claim. 
    
    \textbf{Claim:} $T=L_{T\delta_{e}}\in\mathcal{L}(\mathcal{S})$. To prove this claim, we distinguish two cases:

    \textbf{Case I:} $h_{2}h_{1}^{-1}\in \mathcal{S}$.
    We have
 \[\langle T\delta_{h_{1}},\delta_{h_{2}}\rangle=\sum_{n=0}^{\infty}f(X_{1}^{n}h_{2}h_{1}^{-1}X_{1}^{-n})=\langle T\delta_{e},\delta_{h_{2}h_{1}^{-1}}\rangle=\langle T\delta_{e}\ast\delta_{h_{1}},\delta_{h_{2}}\rangle.\]
 
 \textbf{Case II:} $h_{2}h_{1}^{-1}\notin S$.
If   $X_{1}^{n}h_{2}h_{1}^{-1}X_{1}^{-n}\notin S$ for any  $n\in\mathbb{N}$, then
  \[\langle T\delta_{h_{1}},\delta_{h_{2}}\rangle=\langle T\delta_{e}\ast\delta_{h_{1}},\delta_{h_{2}}\rangle=0.\]
 On the other hand, there exists a natural number $n$ such that $X_{1}^{n+1}h_{2}h_{1}^{-1}X_{1}^{-n-1}\in \mathcal{S}$ and
   $X_{1}^{i}h_{2}h_{1}^{-1}X_{1}^{-i}\notin \mathcal{S}$ whenever $i\leq n$. Let $X=X_{1}^{n+1}h_{2}h_{1}^{-1}X_{1}^{-n-1}$, then $X_{0}|X$. 
By Lemma \ref{lem:partial_order_S_property} we have that $X_{1}^{-n}XX_{1}^{n}\notin \mathcal{S}$ for any $n\geq1$ and $X_{1}^{n}XX_{1}^{-n}\notin \mathcal{S}$ whenever $n>\mathsf{ind}(X)$.
  Let $m$ be an even number such that $m>>\mathsf{ind}(X)$, then we have
  \begin{align*}
  \sum_{i=1}^{\frac{m}{2}}\left|\langle D(X_1^{m})\delta_e,\delta_{X_1^{m-i}XX_1^{i-1}}\rangle \right|^2=
   \sum_{i=1}^{\frac{m}{2}}\left|\sum_{j=-(i-1)}^{m-i}f(X_1^{j}XX_1^{-j})\right|^2=
   \sum_{i=1}^{\frac{m}{2}}\left|\sum_{j=0}^{\mathsf{ind}(X)}f(X_1^{j}XX_1^{-j})\right|^2.
  \end{align*}
 Note that $X_{1}^{m-i}XX_{1}^{i-1}\neq X_{1}^{m-j}XX_{1}^{j-1}$ whenever $1\leq i< j\leq\frac{m}{2}$, then we have
 \[\sum_{i=1}^{\frac{m}{2}}\left|\sum_{j=0}^{\mathsf{ind}(X)}f(X_1^{j}XX_1^{-j})\right|^2 \leq \bigg\|D(X_{1}^{m})\delta_{e}\bigg\|_{2}^{2} \leq\bigg\|D\bigg\|^{2} \]
 for any $m$. This implies that $\sum_{j=0}^{\mathsf{ind}(X)}f(X_1^{j}XX_1^{-j})=0$.
  Therefore,
   \[ \langle T\delta_{h_{1}},\delta_{h_{2}}\rangle=\sum_{j=0}^{\infty}f(X_{1}^{j}h_{2}h_{1}^{-1}X_{1}^{-j})
  =\sum_{j=0}^{\mathsf{ind}(X)}f(X_{1}^{j}XX_{1}^{-j})=0.\]
  From the above discussion, we have $\langle T\delta_{h_{1}},\delta_{h_{2}}\rangle=\langle T\delta_{e}\ast\delta_{h_{1}},\delta_{h_{2}}\rangle$
for all $h_{1}$ and $h_{2}$ in $\mathcal{S}$. 
 Thus the claim holds.
 
  By equation (\ref{averaging for $X_1$}), we have $B=T$. This completes the proof.
    \end{proof}
  \end{lemma}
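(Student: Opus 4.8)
The plan is to show that the bounded operator $B$ furnished by the averaging formula~(\ref{averaging for $X_1$}) actually belongs to $\mathcal{L}(\mathcal{S})$; since equation~(\ref{*spatial for $X_1$}) already records $D(X_1)=B-X_1^{*}BX_1$ with $B\in B(\mathcal{H})$, this is the only point that needs proof. Write $D(X_1)=L_f$ with $f\in\mathcal{H}$. Iterating the derivation identity gives $D(X_1^{m})=\sum_{k=0}^{m-1}X_1^{k}L_fX_1^{m-1-k}$, and cancelling with $X_1^{*}X_1=I$ one finds $(X_1^{*})^{m-1}D(X_1^{m})=\sum_{i=0}^{m-1}(X_1^{*})^{i}L_fX_1^{i}$, an operator of norm at most $\|D\|$. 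Pairing with basis vectors yields $\langle(X_1^{*})^{m-1}D(X_1^{m})\delta_{h_1},\delta_{h_2}\rangle=\sum_{i=0}^{m-1}f(X_1^{i}h_2h_1^{-1}X_1^{-i})$, where $f$ is extended by $0$ off $\mathcal{S}$. Taking $h_2h_1^{-1}=X_1^{k}$ turns this into $m\,f(X_1^{k})$, so the uniform bound $\|D\|$ forces $f(X_1^{n})=0$ for all $n\in\mathbb{N}$; this is where the continuity of $D$ enters.

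Next I would prove that the sequence $(X_1^{*})^{m-1}D(X_1^{m})$ converges in the weak operator topology. Fix $h_1,h_2\in\mathcal{S}$ and set $g=h_2h_1^{-1}$. If $g$ is a power of $X_1$ the partial sums $\sum_{i}f(X_1^{i}gX_1^{-i})$ vanish by the previous step; otherwise I expect, using Lemma~\ref{lem:partial_order_S_property} together with the commutation rules $X_1^{-1}X_0=X_0X_2^{-1}$ and $X_1^{-1}X_j=X_{j+1}X_1^{-1}$ ($j\geq 2$) that govern conjugation by $X_1$, that $X_1^{i}gX_1^{-i}\notin\mathcal{S}$ once $i$ is large, so the partial sums stabilize. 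Hence $\langle T\delta_{h_1},\delta_{h_2}\rangle:=\lim_{m}\langle(X_1^{*})^{m-1}D(X_1^{m})\delta_{h_1},\delta_{h_2}\rangle$ defines a bounded operator $T$ that is the weak limit of the sequence; and since $E_\omega$ returns ordinary limits when they exist, comparison with~(\ref{averaging for $X_1$}) gives $B=T$. It then remains to verify $T=L_{T\delta_e}$, i.e.\ $\langle T\delta_{h_1},\delta_{h_2}\rangle=\langle L_{T\delta_e}\delta_{h_1},\delta_{h_2}\rangle$ for all $h_1,h_2\in\mathcal{S}$.

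For this, one computes $\langle L_{T\delta_e}\delta_{h_1},\delta_{h_2}\rangle=(T\delta_e)(h_2h_1^{-1})$ when $h_2h_1^{-1}\in\mathcal{S}$ and $0$ otherwise, while $\langle T\delta_{h_1},\delta_{h_2}\rangle=\sum_{i\geq 0}f(X_1^{i}h_2h_1^{-1}X_1^{-i})$; the two agree when $h_2h_1^{-1}\in\mathcal{S}$, and both are $0$ when no conjugate of $h_2h_1^{-1}$ by a power of $X_1$ lies in $\mathcal{S}$. The remaining case — $h_2h_1^{-1}\notin\mathcal{S}$ but $X_1^{n+1}(h_2h_1^{-1})X_1^{-n-1}\in\mathcal{S}$ with $n$ minimal — is the heart of the argument and the step I expect to be the main obstacle. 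Put $X=X_1^{n+1}(h_2h_1^{-1})X_1^{-n-1}$; minimality and the normal-form calculus of $\mathcal{S}$ should give $X_0\mid X$ and $X_1\nmid X$, whence by Lemma~\ref{lem:partial_order_S_property} the conjugates $X_1^{j}XX_1^{-j}$ that lie in $\mathcal{S}$ are exactly those with $0\leq j\leq p$ for some $p\leq\mathsf{ind}(X)$, and the sum to be controlled collapses to $c:=\sum_{j=0}^{\mathsf{ind}(X)}f(X_1^{j}XX_1^{-j})$. To force $c=0$, I would evaluate $D(X_1^{m})$ on $\delta_e$ for a large even $m$: the coefficient of $\delta_{X_1^{m-i}XX_1^{i-1}}$ equals $\sum_{j=-(i-1)}^{m-i}f(X_1^{j}XX_1^{-j})=c$ for every $1\leq i\leq m/2$, the vectors $\delta_{X_1^{m-i}XX_1^{i-1}}$ ($1\leq i\leq m/2$) are pairwise distinct (again a normal-form fact using $X_1\nmid X$), so $\tfrac{m}{2}\,|c|^{2}\leq\|D(X_1^{m})\delta_e\|_2^{2}\leq\|D\|^{2}$, and letting $m\to\infty$ gives $c=0$. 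This yields $T=L_{T\delta_e}\in\mathcal{L}(\mathcal{S})$, hence $B=T\in\mathcal{L}(\mathcal{S})$ and the lemma. The genuinely delicate points, all specific to the structure of Thompson's semigroup, are that conjugation by $X_1^{i}$ eventually leaves $\mathcal{S}$, that $X_0\mid X$ and $X_1\nmid X$ in the critical case, and that the vectors $\delta_{X_1^{m-i}XX_1^{i-1}}$ are distinct.
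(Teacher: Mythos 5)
Your proposal is correct and follows essentially the same route as the paper: the same telescoped operators $(X_1^{*})^{m-1}D(X_1^{m})$ with uniform norm bound, the same weak-limit operator $T$ compared with the ultrafilter average $B$, the same two-case verification that $T=L_{T\delta_e}$, and the same $\tfrac{m}{2}|c|^{2}\leq\|D\|^{2}$ trick to kill the sum $\sum_{j=0}^{\mathsf{ind}(X)}f(X_1^{j}XX_1^{-j})$ in the critical case. The only (harmless) difference is that you derive $f(X_1^{n})=0$ explicitly from the uniform bound on matrix entries, where the paper simply cites continuity of $D$.
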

  
   \begin{lemma}\label{lem_p_1_conclusion}
 Let $D(X_{1})=L_{f}\in\mathcal{L}(\mathcal{S})$, then
 \[D(X_{1})=\sum_{\ X_{0}|X}f(X)X+\sum_{ X_{0}\nmid X,X_{1}|X}f(X)X.\]
  \begin{proof}
  Let $D(X_{1})=\sum_{X_{0}|X}f(X)X+\sum_{X_{0}\nmid X,X_{1}|X}f(X)X+\sum_{X_{0}\nmid X,X_{1}\nmid X}f(X)X$. By the fact that $X_{1}X_{3}=X_{2}X_{1}$,
  we have
   \[D(X_{1})X_{3}+X_{1}D(X_{3})=D(X_{2})X_{1}+X_{2}D(X_{1}).\]
   Then
   \[\sum_{X_{0}\nmid X,X_{1}\nmid X}f(X)XX_{3}=X_{2}\sum_{X_{0}\nmid X,X_{1}\nmid X}f(X)X.\]
Let $X$ be the minimal element of the set $\{X\in\mathcal{S}:\ X_{0}\nmid X,\ X_{1}\nmid X,\ f(X)\neq0\}$. Then $X_2X\prec XX_3$.
   It follows that $f(X)=0$ when $X_{0}\nmid X$ and $X_{1}\nmid X$.
 This completes the proof.
  \end{proof}
  \end{lemma}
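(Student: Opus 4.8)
The plan is to show that $f(X)=0$ for every $X$ with $X_0\nmid X$ and $X_1\nmid X$; the stated decomposition of $D(X_1)$ then follows at once, since those are precisely the terms excluded from the right-hand side. Following the pattern used for $D(X_0)$, I would split the expansion $D(X_1)=L_f$ into three pieces according to the smallest generator occurring in each normal form:
\[
P_0=\sum_{X_0\mid X}f(X)X,\qquad
P_1=\sum_{X_0\nmid X,\,X_1\mid X}f(X)X,\qquad
P_2=\sum_{X_0\nmid X,\,X_1\nmid X}f(X)X.
\]
Call these the type~A, type~B and type~C parts respectively (type~C meaning smallest generator index $\ge 2$), so $D(X_1)=P_0+P_1+P_2$ and the goal is $P_2=0$. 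To produce a usable identity I apply $D$ to the relation $X_1X_3=X_2X_1$ (a special case of $X_jX_i=X_iX_{j+1}$) and use the Leibniz rule, obtaining $D(X_1)X_3+X_1D(X_3)=D(X_2)X_1+X_2D(X_1)$.

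The crucial step is to read off the coefficients of this operator identity at those basis words $Y$ with $X_0\nmid Y$ and $X_1\nmid Y$ (the type~C words), and to check that only the contributions of $P_2$ survive there. This is a bookkeeping argument on normal forms, carried out uniformly without any knowledge of $D(X_2)$ or $D(X_3)$. Left multiplication by $X_1$ always produces a word whose smallest generator is $X_0$ or $X_1$: if the factor met is $X_0$ then $X_1X_0=X_0X_2$ keeps an $X_0$ in front, and otherwise $X_1$ remains leftmost; hence $X_1D(X_3)$ has no type~C component. Symmetrically, right multiplication by $X_1$ slides the new $X_1$ leftward through the higher generators (via $X_jX_1=X_1X_{j+1}$) and leaves it either as an $X_1$-factor or adjacent to an $X_0$-block, so $D(X_2)X_1$ has no type~C component either. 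The same inspection shows $P_0X_3$ and $X_2P_0$ stay type~A and $P_1X_3$ and $X_2P_1$ stay type~B, whereas $P_2X_3$ and $X_2P_2$ are themselves purely type~C. Reading off the type~C coefficients therefore collapses the identity to
\[
\sum_{X_0\nmid X,\,X_1\nmid X}f(X)\,XX_3=X_2\sum_{X_0\nmid X,\,X_1\nmid X}f(X)\,X .
\]

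To finish I invoke the total order and Lemma~\ref{lem_property of total order of Thompson's semigroup}. By cancellativity both maps $X\mapsto XX_3$ and $X\mapsto X_2X$ are injective, and by part~($\romannumeral2$) of that lemma they are strictly order-preserving on type~C elements, so no cancellation occurs on either side. If $P_2\neq 0$, let $X^\ast$ be the unique $\prec$-minimal type~C word with $f(X^\ast)\neq0$ (it exists by part~($\romannumeral1$)); then $X^\ast X_3$ is the $\prec$-minimal word in the support of the left-hand side and $X_2X^\ast$ is that of the right-hand side, each with coefficient $f(X^\ast)\neq 0$. Since $X_2X^\ast$ and $X^\ast X_3$ are both type~C, their $\mathsf{ind}$ and their $\mathsf{ind}_0,\mathsf{ind}_1$ agree, while $\mathsf{ind}_2(X_2X^\ast)=\mathsf{ind}_2(X^\ast)+1>\mathsf{ind}_2(X^\ast X_3)$; hence $X_2X^\ast\prec X^\ast X_3$. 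But two equal functions in $\mathcal{H}$ have the same support and thus the same $\prec$-minimal nonzero term, forcing $X^\ast X_3=X_2X^\ast$, a contradiction. Therefore $P_2=0$.

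I expect the genuine obstacle to be the cancellation bookkeeping of the second paragraph: one must argue, uniformly over the unknown terms of $D(X_2)$ and $D(X_3)$, that every one of $X_1D(X_3)$, $D(X_2)X_1$, $P_0X_3$, $X_2P_0$, $P_1X_3$, $X_2P_1$ avoids type~C, which rests entirely on how multiplication by $X_1$, $X_2$ and $X_3$ acts on normal forms through $X_jX_i=X_iX_{j+1}$. Once this is in place, the concluding order comparison is routine given Lemma~\ref{lem_property of total order of Thompson's semigroup}.
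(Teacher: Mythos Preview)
Your proposal is correct and follows essentially the same route as the paper: decompose $D(X_1)$ by the smallest generator, apply $D$ to $X_1X_3=X_2X_1$, project onto the ``type~C'' component, and use the total order together with Lemma~\ref{lem_property of total order of Thompson's semigroup} on the minimal element. The paper's proof simply asserts the reduced identity $\sum f(X)XX_3=X_2\sum f(X)X$ without justification, whereas you spell out the normal-form bookkeeping showing that $X_1D(X_3)$, $D(X_2)X_1$, $P_0X_3$, $X_2P_0$, $P_1X_3$, $X_2P_1$ all avoid type~C; this extra detail is sound and fills a gap the paper leaves implicit.
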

  Now, we are ready to prove Lemma \ref{spatial for $X_1$}.
  \begin{proof}[Proof of Lemma \ref{spatial for $X_1$}]
   By Lemma \ref{lem_p_1_*_derivation_complete}, we have $D(X_{1})=B-X_{1}^{*}BX_{1}$ for some $B=L_{g}\in\mathcal{L}(\mathcal{S})$. We may assume that $g(e)=0$, where $e$ is the unit element of $\mathcal{S}$.
   Let
  \[L_{g}=\sum_{X_{1}|X}g(X)X+\sum_{X_{1}\nmid X,X_{0}|X}g(X)X+\sum_{X_{1}\nmid X,X_{0}\nmid X}g(X)X.\]
  Then
  \begin{align*}
  D(X_{1})=&\sum_{X_{1}|X}g(X)X-X_{1}^{*}\sum_{X_{1}|X}g(X)XX_{1}+\sum_{X_{1}\nmid X,X_{0}\nmid X}g(X)X-X_{1}^{*}\sum_{X_{1}\nmid X,X_{0}\nmid X}g(X)XX_{1}
  \end{align*}
  \begin{align*}
  +&\sum_{X_{1}\nmid X,X_{0}|X}g(X)X-X_{1}^{*}\sum_{X_{1}\nmid X,X_{0}|X}g(X)XX_{1}.
  \end{align*}
  Since $D(X_{1})\in\mathcal{L}(\mathcal{S})$ and $X_1^{-1}XX_1\notin \mathcal{S}$ when $X_{1}\nmid X$ and $X_{0}|X$ , we have $\sum_{X_{1}\nmid X,X_{0}|X}g(X)X=0$. Therefore,
  \[D(X_{1})=\sum_{X_{1}|X}g(X)X-X_{1}^{*}\sum_{X_{1}|X}g(X)XX_{1}+\sum_{X_{1}\nmid X,X_{0}\nmid X}g(X)X-X_{1}^{*}\sum_{X_{1}\nmid X,X_{0}\nmid X}g(X)XX_{1}.\]
  By Lemma \ref{lem_p_1_conclusion}, we have
    \[\sum_{X_{1}\nmid X,X_{0}\nmid X}g(X)X-X_{1}^{*}\sum_{X_{1}\nmid X,X_{0}\nmid X}g(X)XX_{1}=0.\]
    Let $X$ be the minimal element of the set $\{X\in\mathcal{S}:\ X_{1}\nmid X,\ X_{0}\nmid X,\ g(X)\neq0\}$, then $X\prec X_1^{-1}XX_1$ since $X\neq e$. 
  It follows that $g(X)=0$ when  $X_{1}\nmid X$ and 
  $X_{0}\nmid X$. Thus $B=\sum_{X_{1}|X}g(X)X$. Let $\widehat{B}=-X_{1}^{*}B$, then $\widehat{B}\in\mathcal{L}(S)$ and  $D(X_{1})=B-X_{1}^{*}BX_{1}=X_{1}X_{1}^{*}B-X_{1}^{*}BX_{1}=\widehat{B} X_{1}-X_{1}\widehat{B}$. We complete the proof.
  \end{proof}

\subsection{Conditional expectation}
\begin{definition}\label{def_conditional expectation}
Let $\mathfrak{B}$ be a Banach algebra and $\mathcal{A}$ be a Banach subalgebra of $\mathfrak{B}$. Let $E$: $\mathfrak{B}\rightarrow \mathcal{A}$ be a contraction such that 

($\romannumeral1$) $E(I)=I$; 

($\romannumeral2$)  $E(A_1BA_2)=A_1E(B)A_2$ whenever $A_1$, $A_2$ $\in \mathcal{A}$ and $B$ $\in\mathfrak{B}$. 

\noindent Then $E$ is described as a \textbf{conditional expectation} from $\mathfrak{B}$ onto $\mathcal{A}$. 
\end{definition}
\begin{definition}
Let   
 $\mathcal{L}_0(\mathcal{S})$ be the subset of $\mathcal{L}(\mathcal{S})$ such that for each $L_f$
 in $\mathcal{L}_0(\mathcal{S})$, $f(X)=0$ if $X\neq
 X_0^n$.
 \end{definition}
 It is not difficult to check that  $\mathcal{L}_0(\mathcal{S})$ is a Banach subalgebra of $\mathcal{L}(\mathcal{S})$.
We have the following theorem.
\begin{theorem}\label{thm_conditional expectation}
Let $E$ be the map: $\mathcal{L}(\mathcal{S})$
$\rightarrow$ $\mathcal{L}_0(\mathcal{S})$,
$\sum_{X\in\mathcal{S}} f(X)X$ $\rightarrow$ $\sum_{n=0}^{\infty} f(X_0^n)X_0^n$. Then $E$
is a well-defined conditional expectation from $\mathcal{L}(\mathcal{S})$ onto $\mathcal{L}_0(\mathcal{S})$.
\begin{proof}
For each $g\in l^2(\mathcal{S})$ such that
$g(X)=0$ when $X\neq X_0^n$, we have
\begin{align*}
\bigg\|\sum_{n=0}^{\infty} f(X_0^n)X_0^n \sum_{n=0}^{\infty}g(X_0^n)\delta_{X_0^n} \bigg\|_2^2\leq\bigg \|L_fg\bigg\|_2^2\leq \bigg\|L_f\bigg\|^2\bigg\|g\bigg\|_2^2.
\end{align*}
This implies that $\sum_{n=0}^{\infty} f(X_0^n)X_0^n$ is bounded on the Hilbert subspace $l^2(\mathcal{S}_1)$, where $\mathcal{S}_1$ is the subsemigroup of $\mathcal{S}$ generated by $X_0$.
The norm of $\sum_{n=0}^{\infty} f(X_0^n)X_0^n$ is bounded by $\|L_f\|$.
Let $\mathscr{F}$ be the Fourier transform: $\mathbb{Z}\rightarrow \mathbb{S}^1$, $n\rightarrow e^{2\pi i \theta}$, $\theta\in [0,1)$. This induces the following isomorphisms \cite{Ge}:
\begin{align*}
&\mathbb{Z}\quad \subseteq\quad\ \ \mathbb{C}\mathbb{Z}\quad\quad \subseteq \quad l^1(\mathbb{Z})\ \quad \subseteq \quad C^*(\mathbb{Z})\quad \subseteq \quad\  \mathcal{L}(\mathbb{Z})\quad\ \ \subseteq\quad l^2(\mathbb{Z})\quad\ \subseteq\quad \cdots\\
&\updownarrow \quad\quad\quad\quad\ \updownarrow
 \quad\quad\quad\quad\quad\ \updownarrow
  \quad\quad\quad\quad\quad\quad \updownarrow
   \quad\quad\quad\quad\quad\ \updownarrow \quad\quad\quad\quad\quad\ \ \updownarrow
\\
&\mathbb{S}^1\quad \subseteq\quad \mathbb{C}[Z]_{\mathbb{S}^1}\quad \subseteq \quad R(\mathbb{S}^1)\quad \subseteq \quad C(\mathbb{S}^1)\quad \subseteq \quad L^{\infty}(\mathbb{S}^1)\quad \subseteq\quad l^2(\mathbb{S}^1)\quad \subseteq\quad \cdots
\end{align*}
Restricting the Fourier transform on $\mathbb{N}$, we have 
\begin{align*}
&\mathbb{C}\mathbb{N}\quad\quad \subseteq \quad\ l^1(\mathbb{N})\quad\ \subseteq \quad B(\mathbb{N})\quad\ \subseteq \quad\ \ \mathcal{L}(\mathbb{N})\quad\ \subseteq\quad\ l^2(\mathbb{N})\quad\ \subseteq\quad \cdots\\
&\ \updownarrow
 \quad\quad\quad\quad\quad\quad \updownarrow
  \quad\quad\quad\quad\quad\ \updownarrow
   \quad\quad\quad\quad\quad\quad \updownarrow \quad\quad\quad\quad\quad\ \ \updownarrow
\\
&\mathbb{C}[Z]_{\mathbb{D}}\quad \subseteq \quad H^1(\mathbb{D})\quad \subseteq \quad H_c(\mathbb{D})\quad \subseteq \quad H^{\infty}(\mathbb{D})\quad \subseteq\quad H^2(\mathbb{D})\quad \subseteq\quad \cdots
\end{align*}
Since $\mathcal{S}_1$ is isomorphic to $\mathbb{N}$, therefore $\sum_{n=0}^{\infty} f(X_0^n)\delta_{n}$ belongs to $\mathcal{L}(\mathbb{N})$ and
$\mathscr{F}(\sum_{n=0}^{\infty} f(X_0^n)\delta_{n})$ is in $H^{\infty}(\mathbb{D})$.
It follows that $\mathscr{F}(\sum_{n=0}^{\infty} f(X_0^n)\delta_{n})$ belongs to $L^{\infty}(\mathbb{S}^1)$. Thus $\sum_{n=0}^{\infty} f(X_0^n)\delta_{n}$ is a bounded operator on $l^2(\mathbb{Z})$ and belongs to $\mathcal{L}(\mathbb{Z})$. Since the subgroup $\left\langle X_0\right\rangle$ 
of Thompson's group $\mathcal{F}$ is isomorphic to $\mathbb{Z}$, we have that 
$\sum_{n=0}^{\infty} f(X_0^n)X_0^n$ is bounded on $l^2(\mathcal{F})$. Thus $\sum_{n=0}^{\infty} f(X_0^n)X_0^n$ is in $\mathcal{L}_0(\mathcal{S})$. Then $E$ is well-defined. It is clear that conditions ($\romannumeral1$) and ($\romannumeral2$) in Definition \ref{def_conditional expectation}  hold for $E$. We complete the proof.
\end{proof}
\end{theorem}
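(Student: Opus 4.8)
The plan is to split the statement into two parts: (a) the formula $E(L_f)=\sum_{n\ge 0}f(X_0^n)X_0^n$ actually produces an element of $\mathcal{L}_0(\mathcal{S})$, and the assignment is a contraction (this is the ``well-defined'' content, noting that $L_f$ determines $f=L_f\delta_e$ so there is no ambiguity in the input); and (b) the resulting map satisfies $E(I)=I$ and the module identity $E(A_1BA_2)=A_1E(B)A_2$. Part (b) is bookkeeping with convolution kernels once (a) is available, so the real work, and the main obstacle, is (a): compressing $L_f$ to the ``diagonal'' only obviously controls the result on a subspace of $l^2(\mathcal{S})$, not on all of it.

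For (a) I would argue in three steps. First, a compression step. Write $\mathcal{S}_1=\langle X_0\rangle$, a copy of $(\mathbb{N},+)$ since $X_0$ has infinite order. Using the defining relations $X_jX_i=X_iX_{j+1}$ ($i<j$) one pushes $X_0^k$ to the left through the normal form of any $u\in\mathcal{S}$ and finds that $uX_0^k=X_0^m$ forces $u=X_0^{m-k}$. Hence for $g\in l^2(\mathcal{S}_1)$ the part of $f\ast g$ supported on $\mathcal{S}_1$ equals the $l^2(\mathbb{N})$-convolution of the diagonal sequence $(f(X_0^n))_{n\ge 0}$ with $g$; so compressing $L_f$ to $l^2(\mathcal{S}_1)$ gives exactly that convolution operator, whose norm is at most $\|L_f\|$. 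Thus $\sum_n f(X_0^n)\delta_n\in\mathcal{L}(\mathbb{N})$.

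Second, a transfer step, which is where the difficulty sits: boundedness of the diagonal kernel as a convolution operator on $l^2(\mathbb{N})$ must be upgraded to boundedness on $l^2(\mathcal{S})$. The Fourier-transform chains recalled above identify $\mathcal{L}(\mathbb{N})$ with $H^{\infty}(\mathbb{D})$ and hence embed it into $L^{\infty}(\mathbb{S}^1)\cong\mathcal{L}(\mathbb{Z})$; since the symbol of the diagonal sequence is analytic, its bi-infinite Laurent matrix is still one-sided convolution by $(f(X_0^n))_{n\ge 0}$, so $\sum_n f(X_0^n)\delta_n\in\mathcal{L}(\mathbb{Z})$ with the same norm. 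Now use that $\langle X_0\rangle\cong\mathbb{Z}$ is a subgroup of Thompson's group $\mathcal{F}$: left convolution by a kernel supported on $\langle X_0\rangle$ preserves every right coset $\langle X_0\rangle w$ and acts there as the given operator on $l^2(\mathbb{Z})$, hence is bounded on $l^2(\mathcal{F})$ with unchanged norm. Finally, since $f(X_0^n)=0$ for $n<0$ the kernel is supported on $\mathcal{S}\subseteq\mathcal{F}$, and convolution by a kernel supported on $\mathcal{S}$ leaves the subspace $l^2(\mathcal{S})\subseteq l^2(\mathcal{F})$ invariant and restricts there to the corresponding $\mathcal{S}$-convolution; thus $\sum_n f(X_0^n)X_0^n\in\mathcal{L}(\mathcal{S})$, it lies in $\mathcal{L}_0(\mathcal{S})$ by construction, and $\|E(L_f)\|\le\|L_f\|$.

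It remains to verify (b). Contractivity is the bound just obtained, and $E(I)=I$ because $I=L_{\delta_e}$ with $e=X_0^0$. For the module identity I would take $A_1=L_g$, $A_2=L_h\in\mathcal{L}_0(\mathcal{S})$ and $B=L_f$: since $g,h$ are supported on $\mathcal{S}_1$, and $X_0^a\,u\,X_0^c\in\mathcal{S}_1$ forces $u\in\mathcal{S}_1$ by the same normal-form computation, the restriction to $\mathcal{S}_1$ of the kernel $g\ast f\ast h$ of $A_1BA_2$ depends on $f$ only through its diagonal, which gives $E(A_1BA_2)=A_1E(B)A_2$. The genuinely delicate point of the whole argument is the transfer step: the naive compression of $L_f$ controls $E(L_f)$ only on $l^2(\mathcal{S}_1)$, and recovering boundedness on all of $l^2(\mathcal{S})$ really does require the identification of $\mathcal{L}(\mathbb{N})$ with analytic Toeplitz symbols together with the way $\langle X_0\rangle$ sits inside Thompson's group.
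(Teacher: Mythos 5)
Your proposal is correct and follows essentially the same route as the paper: compress $L_f$ to $l^2(\mathcal{S}_1)$ to bound the diagonal kernel in $\mathcal{L}(\mathbb{N})$, transfer via the identification $\mathcal{L}(\mathbb{N})\cong H^{\infty}(\mathbb{D})\subseteq L^{\infty}(\mathbb{S}^1)\cong\mathcal{L}(\mathbb{Z})$, and then use $\langle X_0\rangle\cong\mathbb{Z}$ inside $\mathcal{F}$ to get boundedness on $l^2(\mathcal{F})$ and hence on $l^2(\mathcal{S})$. You merely spell out steps the paper leaves implicit (the right-coset decomposition, the restriction to $l^2(\mathcal{S})$, and the verification of the module identity), which is fine.
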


\subsection{Proof of main results}  
In this subsection, we shall show the main conclusions 
(Theorems \ref{thm_the first cohomology is zero} and \ref{thm_first_co_B(S)})
of this paper. When the $\mathfrak{B}(\mathcal{S})$-bimodule is $\mathcal{L}(\mathcal{S})$,
  the first continuous cohomology group of $\mathfrak{B}(\mathcal{S})$ vanishes. 
   \begin{theorem}\label{thm_the first cohomology is zero}
   $H^1(\mathfrak{B}(\mathcal{S}),\mathcal{L}(\mathcal{S}))=0$.
   \end{theorem}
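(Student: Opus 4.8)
The plan is to show that every continuous derivation $D\colon\mathfrak{B}(\mathcal{S})\to\mathcal{L}(\mathcal{S})$ is inner, i.e. of the form $D(A)=\widehat T A-A\widehat T$ for some $\widehat T\in\mathcal{L}(\mathcal{S})$. Since every derivation is automatically continuous by Proposition \ref{prop: continuity_derivations}, this will give $\mathcal{Z}^1(\mathfrak{B}(\mathcal{S}),\mathcal{L}(\mathcal{S}))=\mathcal{B}^1(\mathfrak{B}(\mathcal{S}),\mathcal{L}(\mathcal{S}))$ and hence $H^1(\mathfrak{B}(\mathcal{S}),\mathcal{L}(\mathcal{S}))=0$. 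The starting point is that $\mathfrak{B}(\mathcal{S})$ is generated as a Banach algebra by the isometries $\{L_s:s\in\mathcal{S}\}$, and in fact by $L_{X_0}$ and $L_{X_1}$ alone, because the relations $X_jX_i=X_iX_{j+1}$ ($i<j$) let one express every $X_n$ in terms of $X_0,X_1$ (e.g. $X_{n+1}=X_0^{-1}X_nX_0$ within $\mathcal{F}$, which on the semigroup level means $X_nX_0=X_0X_{n+1}$, so $X_{n+1}$ is determined recursively). Consequently a derivation is completely determined by the two values $D(X_0)$ and $D(X_1)$, and conversely, to produce an inner derivation agreeing with $D$ it suffices to find one element $\widehat T\in\mathcal{L}(\mathcal{S})$ that simultaneously implements $D$ on $X_0$ and on $X_1$.

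First I would invoke Lemmas \ref{lem:spatial for $X_0$} and \ref{spatial for $X_1$}: there exist $\widehat A,\widehat B\in\mathcal{L}(\mathcal{S})$ with $D(X_0)=\widehat A X_0-X_0\widehat A$ and $D(X_1)=\widehat B X_1-X_1\widehat B$. The next step is to subtract off the inner derivation $\mathrm{ad}_{\widehat A}$, i.e. replace $D$ by $D':=D-\mathrm{ad}_{\widehat A}$; then $D'$ is again a continuous derivation into $\mathcal{L}(\mathcal{S})$ with $D'(X_0)=0$, and we are reduced to showing that any such $D'$ is implemented by an element commuting with $X_0$. From $D'(X_0)=0$ and the derivation identity one gets $D'(X_0^n)=0$ for all $n$, and, applying $D'$ to the relations $X_nX_0=X_0X_{n+1}$, a recursion of the form $D'(X_{n+1})=X_0^{*}X_nX_0^{*}\cdots$ — more precisely $X_0 D'(X_{n+1})=X_n D'(X_0^{-1}\cdots)$ type identities — that express all $D'(X_n)$ in terms of $D'(X_1)=:L_g$. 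Here the total order of Definition \ref{def:Total_order_on_ Thompson's_semigroup} and Lemma \ref{lem_property of total order of Thompson's semigroup}(ii) are the workhorses, exactly as in Subsection \ref{subsec_The case of $X_1$}: they force the support of $g$ (after normalizing $g(e)=0$) to lie in $\{X:X_1\mid X\}$, so $\widehat B:=-X_1^{*}L_g$ lies in $\mathcal{L}(\mathcal{S})$ and implements $D'$ on $X_1$ while, one checks, also fixing $X_0$ (i.e. $\widehat B X_0=X_0\widehat B$, equivalently $\mathrm{ad}_{\widehat B}(X_0)=0=D'(X_0)$).

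The third step is to verify that the single operator $\widehat T:=\widehat A+\widehat B$ implements $D$ on every generator $L_{X_n}$, and hence on all of $\mathfrak{B}(\mathcal{S})$. On $X_0$: $\mathrm{ad}_{\widehat T}(X_0)=\mathrm{ad}_{\widehat A}(X_0)+\mathrm{ad}_{\widehat B}(X_0)=D(X_0)+0$. On $X_1$: $\mathrm{ad}_{\widehat T}(X_1)=\mathrm{ad}_{\widehat A}(X_1)+\mathrm{ad}_{\widehat B}(X_1)$, and since $D'(X_1)=D(X_1)-\mathrm{ad}_{\widehat A}(X_1)=\mathrm{ad}_{\widehat B}(X_1)$ by construction, this equals $D(X_1)$. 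Because $D-\mathrm{ad}_{\widehat T}$ is a continuous derivation vanishing on the generating set $\{L_{X_0},L_{X_1}\}$ — and a derivation vanishing on a generating set of a Banach algebra vanishes identically by continuity and the Leibniz rule — we conclude $D=\mathrm{ad}_{\widehat T}$ with $\widehat T\in\mathcal{L}(\mathcal{S})$, which is precisely the assertion that $D\in\mathcal{B}^1(\mathfrak{B}(\mathcal{S}),\mathcal{L}(\mathcal{S}))$. The main obstacle is the compatibility bookkeeping in the middle step: ensuring that the corrected operator $\widehat B$ both implements $D'$ on $X_1$ \emph{and} does not reintroduce a nonzero value on $X_0$ (and, by the recursion, behaves correctly on all $X_n$); this is where the precise structure of the support-vanishing arguments of Lemmas \ref{lem_p_1_*_derivation_complete}, \ref{lem_p_1_conclusion} and \ref{spatial for $X_1$}, together with the strict monotonicity in Lemma \ref{lem_property of total order of Thompson's semigroup}(ii), has to be used carefully rather than invoked as a black box.
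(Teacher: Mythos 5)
Your overall architecture matches the paper's (implement $D$ on $X_0$ and on $X_1$ separately by Lemmas \ref{lem:spatial for $X_0$} and \ref{spatial for $X_1$}, then glue and propagate to all of $\mathfrak{B}(\mathcal{S})$ via the relations $X_nX_0=X_0X_{n+1}$ and continuity, which is the paper's Lemma \ref{lem_spacial_derivation_1}). But the decisive step is exactly the one you dispose of with ``one checks, also fixing $X_0$'', and as stated it is a genuine gap. After replacing $D$ by $D'=D-\mathrm{ad}_{\widehat A}$, the operator $\widehat B=-X_1^{*}B$ produced by the arguments of Subsection \ref{subsec_The case of $X_1$} is only known to satisfy $D'(X_1)=\widehat BX_1-X_1\widehat B$ with $B$ supported on $\{X: X_1\mid X\}$; nothing there makes $\widehat B$ commute with $X_0$. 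Indeed, an operator $L_f\in\mathcal{L}(\mathcal{S})$ commutes with $X_0$ only if $f$ is supported on the powers of $X_0$ (from $wX_0=X_0\sigma(w)$ for $X_0\nmid w$, commutation forces $f(X_0^a w)=f(X_0^a\sigma(w))$ for all $a$, and $f\in l^2$ then kills every $w\neq e$), and the support-vanishing lemmas of Subsection \ref{subsec_The case of $X_1$} give nothing close to that. So $\mathrm{ad}_{\widehat A+\widehat B}(X_0)=D(X_0)+(\widehat BX_0-X_0\widehat B)$ need not equal $D(X_0)$, and your candidate $\widehat T=\widehat A+\widehat B$ does not implement $D$ on $X_0$ without further work.

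What is missing is precisely the content of the paper's Lemma \ref{lem_spacital_derivation_2}: using the three relations $X_1X_0=X_0X_2$, $X_2X_0=X_0X_3$, $X_2X_1=X_1X_3$, one shows that the discrepancy $A-B=L_f$ between any two implementing operators is supported on $\{e\}\cup\{X_0^m\}\cup\{X_1^m\}$ (via minimal-element arguments under the total order, plus an $l^2$ argument forcing $f(X_n^m)=0$ for $n\geq 2$), and then one corrects by the $X_0$-part, setting $C=A-\sum_{m\geq1}f(X_0^m)X_0^m$. Even at that point there is an analytic issue you have not addressed: one must know that $\sum_{m\geq1}f(X_0^m)X_0^m$ is a bounded operator lying in $\mathcal{L}(\mathcal{S})$, which is exactly what the conditional expectation of Theorem \ref{thm_conditional expectation} (via the Hardy-space argument) provides; it does not follow merely from $f\in l^2(\mathcal{S})$. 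With that lemma in hand your gluing becomes correct (in your normalization $A=0$, it shows $\widehat B=cI+P_0+P_1$ with $P_i$ supported on powers of $X_i$, and the averaging construction kills $c$ and $P_1$, so a corrected implementing operator commuting with $X_0$ exists), but as written your proposal asserts the compatibility rather than proving it, and also leans on the inaccurate claim that $L_{X_0},L_{X_1}$ generate $\mathfrak{B}(\mathcal{S})$ as a Banach algebra; the correct propagation uses $X_0^{*}X_0=I$ and the relations, as in Lemma \ref{lem_spacial_derivation_1}.
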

   The following result is an immediate corollary of
   Theorems \ref{prop: continuity_derivations} and \ref{thm_the first cohomology is zero}.
\begin{corollary}\label{cor:derivation_spatial}
Derivations on the Banach algebra $\mathfrak{B}(\mathcal{S})$ are spatial and induced by linear operators in $\mathcal{L}(\mathcal{S})$.
\end{corollary}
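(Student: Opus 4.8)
The plan is to obtain the corollary as a short definition chase from the two cited theorems, the only genuine content being the passage from the self-module $\mathfrak{B}(\mathcal{S})$ to the larger coefficient module $\mathcal{L}(\mathcal{S})$ together with the sign relating the paper's notions of \emph{inner} and \emph{spatial} derivation. First I would fix an arbitrary derivation $D$ on $\mathfrak{B}(\mathcal{S})$, that is, a linear map $D\colon\mathfrak{B}(\mathcal{S})\to\mathfrak{B}(\mathcal{S})$ with $D(AB)=AD(B)+D(A)B$. By Theorem \ref{prop: continuity_derivations} (which rests on the semisimplicity recorded in Corollary \ref{cor:S_Fn_semisimple} together with the Johnson--Sinclair theorem) the map $D$ is automatically continuous, so no continuity hypothesis has to be imposed by hand.

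Next I would recast $D$ as a derivation into the larger bimodule. Since $\mathfrak{B}(\mathcal{S})$ is a Banach subalgebra of $\mathcal{L}(\mathcal{S})$ and the $\mathfrak{B}(\mathcal{S})$-bimodule structure on $\mathcal{L}(\mathcal{S})$ is simply operator multiplication, the composite $\iota\circ D$ of $D$ with the inclusion $\iota\colon\mathfrak{B}(\mathcal{S})\hookrightarrow\mathcal{L}(\mathcal{S})$ is again a continuous derivation, now an element of $\mathcal{Z}^1(\mathfrak{B}(\mathcal{S}),\mathcal{L}(\mathcal{S}))$. I would then invoke Theorem \ref{thm_the first cohomology is zero}: since $H^1(\mathfrak{B}(\mathcal{S}),\mathcal{L}(\mathcal{S}))=0$, we have $\mathcal{Z}^1=\mathcal{B}^1$, so $\iota\circ D$ is inner. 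By the paper's definition of inner derivation there is a $T\in\mathcal{L}(\mathcal{S})$ with $D(A)=AT-TA$ for all $A\in\mathfrak{B}(\mathcal{S})$. Putting $\widehat{T}:=-T$, which again lies in $\mathcal{L}(\mathcal{S})\subseteq B(\mathcal{H})$, yields $D(A)=\widehat{T}A-A\widehat{T}$ for every $A$; this is exactly the assertion that $D$ is spatial and that it is induced by a bounded operator in $\mathcal{L}(\mathcal{S})$, which completes the argument.

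The hard part is essentially nonexistent once the two theorems are in hand; the only thing to verify is the claim used in the second step, namely that enlarging the coefficients from $\mathfrak{B}(\mathcal{S})$ to $\mathcal{L}(\mathcal{S})$ preserves the derivation identity. This is immediate because both bimodule actions are restrictions of multiplication in $B(\mathcal{H})$, so the Leibniz rule for $\iota\circ D$ reads verbatim as it did for $D$. The remaining point is purely bookkeeping: the paper's inner derivation has the form $D(A)=AT-TA$ whereas spatiality is phrased as $D(A)=\widehat{T}A-A\widehat{T}$, and the two are matched by the sign change $\widehat{T}=-T$.
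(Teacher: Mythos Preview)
Your argument is correct and is exactly the approach the paper has in mind: it states this corollary as an immediate consequence of Theorems~\ref{prop: continuity_derivations} and~\ref{thm_the first cohomology is zero}, and your write-up simply unpacks that one-line deduction (automatic continuity, then view $D$ as taking values in $\mathcal{L}(\mathcal{S})$, then innerness from $H^1=0$, then the sign change to match the spatial convention).
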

Let $\mathcal{M}(\mathcal{S})$ be the subset of $\mathcal{L}(\mathcal{S})$ such that $[T,A]\in \mathfrak{B}(\mathcal{S})$ for each $T\in \mathcal{M}(\mathcal{S})$ and $A\in \mathfrak{B}(\mathcal{S})$. Then $\mathcal{M}(\mathcal{S})$ is a norm closed linear subspace of $\mathcal{L}(\mathcal{S})$ containing $\mathfrak{B}(\mathcal{S})$. Then the first cohomology group of $\mathfrak{B}(\mathcal{S})$
is charactered as the linear space $\mathcal{M}(\mathcal{S})$ module $\mathfrak{B}(\mathcal{S})$.
\begin{theorem}\label{thm_first_co_B(S)}
$H^{1}(\mathfrak{B}(\mathcal{S}),\mathfrak{B}(\mathcal{S}))=\dfrac{\mathcal{M}(\mathcal{S})}{\mathfrak{B}(\mathcal{S})}$.
\end{theorem}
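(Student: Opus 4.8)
The plan is to reduce Theorem \ref{thm_first_co_B(S)} to Theorem \ref{thm_the first cohomology is zero} by comparing the cocycle/coboundary spaces for the two bimodules $\mathcal{L}(\mathcal{S})$ and $\mathfrak{B}(\mathcal{S})$. First I would observe that, by the definition of $\mathcal{M}(\mathcal{S})$ together with Corollary \ref{cor:derivation_spatial}, the space $\mathcal{B}^{1}(\mathfrak{B}(\mathcal{S}),\mathfrak{B}(\mathcal{S}))$ of inner derivations of $\mathfrak{B}(\mathcal{S})$ coincides with the set of maps $A\mapsto [T,A]$ with $T\in\mathfrak{B}(\mathcal{S})$, and the range of such a map does indeed lie in $\mathfrak{B}(\mathcal{S})$; meanwhile $\mathcal{Z}^{1}(\mathfrak{B}(\mathcal{S}),\mathfrak{B}(\mathcal{S}))$ is, via Corollary \ref{cor:derivation_spatial}, exactly the set of maps $A\mapsto [T,A]$ as $T$ ranges over all elements of $\mathcal{L}(\mathcal{S})$ for which $[T,A]\in\mathfrak{B}(\mathcal{S})$ for every $A\in\mathfrak{B}(\mathcal{S})$ — that is, $T\in\mathcal{M}(\mathcal{S})$. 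So the key point is a clean description of both spaces purely in terms of commutators.

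Concretely, I would define the linear map $\Phi\colon \mathcal{M}(\mathcal{S})\to \mathcal{Z}^{1}(\mathfrak{B}(\mathcal{S}),\mathfrak{B}(\mathcal{S}))$ by $\Phi(T)(A)=[T,A]=TA-AT$. Each $\Phi(T)$ is a derivation with values in $\mathfrak{B}(\mathcal{S})$ by the definition of $\mathcal{M}(\mathcal{S})$, and it is continuous, so $\Phi$ is well-defined. The core of the argument is that $\Phi$ is surjective: given any $D\in\mathcal{Z}^{1}(\mathfrak{B}(\mathcal{S}),\mathfrak{B}(\mathcal{S}))$, Proposition \ref{prop: continuity_derivations} makes $D$ continuous, and then, regarding $D$ as a derivation into the larger bimodule $\mathcal{L}(\mathcal{S})$, Corollary \ref{cor:derivation_spatial} produces $T\in\mathcal{L}(\mathcal{S})$ with $D(A)=TA-AT$ for all $A\in\mathfrak{B}(\mathcal{S})$; since $D(A)\in\mathfrak{B}(\mathcal{S})$ for every $A$, we get $T\in\mathcal{M}(\mathcal{S})$, hence $D=\Phi(T)$. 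Next I would identify the kernel of $\Phi$: $\Phi(T)=0$ means $T$ commutes with every element of $\mathfrak{B}(\mathcal{S})$, i.e. $T\in\mathfrak{B}(\mathcal{S})'\cap\mathcal{L}(\mathcal{S})$. Here one uses that $T\in\mathcal{L}(\mathcal{S})$ commuting with all $L_s$, $s\in\mathcal{S}$, forces $T$ to be a right convolution operator, and combined with $T\in\mathcal{L}(\mathcal{S})$ and $\mathcal{L}(\mathcal{S})'=\mathcal{R}(\mathcal{S})$, one pins down $\ker\Phi$; I expect $\ker\Phi$ to be spanned by scalar multiples of $I$ (as $\mathcal{L}(\mathcal{S})\cap\mathcal{R}(\mathcal{S})=\mathbb{C}I$), which in particular lies inside $\mathfrak{B}(\mathcal{S})$.

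Finally I would assemble the quotient. The inner derivations $\mathcal{B}^{1}(\mathfrak{B}(\mathcal{S}),\mathfrak{B}(\mathcal{S}))$ are precisely $\Phi(\mathfrak{B}(\mathcal{S}))$: every $T\in\mathfrak{B}(\mathcal{S})$ gives an inner derivation with range in $\mathfrak{B}(\mathcal{S})$, and conversely inner derivations come from such $T$. Since $\ker\Phi\subseteq\mathfrak{B}(\mathcal{S})$, $\Phi$ descends to an isomorphism of vector spaces
\[
H^{1}(\mathfrak{B}(\mathcal{S}),\mathfrak{B}(\mathcal{S}))=\frac{\mathcal{Z}^{1}}{\mathcal{B}^{1}}\;\cong\;\frac{\mathcal{M}(\mathcal{S})/\ker\Phi}{\mathfrak{B}(\mathcal{S})/\ker\Phi}\;\cong\;\frac{\mathcal{M}(\mathcal{S})}{\mathfrak{B}(\mathcal{S})},
\]
which is the claim. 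I expect the main obstacle to be the bookkeeping around $\ker\Phi$ and making rigorous that a derivation into $\mathfrak{B}(\mathcal{S})$, viewed in the bigger bimodule, is implemented by a $T$ that actually lies in $\mathcal{M}(\mathcal{S})$ rather than merely in $\mathcal{L}(\mathcal{S})$ — this is exactly where the definition of $\mathcal{M}(\mathcal{S})$ and the spatial form from Corollary \ref{cor:derivation_spatial} have to be matched carefully; the rest is formal quotient algebra. One should also double-check that $\mathcal{M}(\mathcal{S})$ is closed and contains $\mathfrak{B}(\mathcal{S})$ (already asserted in the text) so that the quotient on the right-hand side is meaningful as a topological vector space.
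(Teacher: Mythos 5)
Your proposal is correct and follows essentially the same route as the paper: both rest on the continuity of derivations, the implementation of any derivation by some $T\in\mathcal{L}(\mathcal{S})$ (Corollary \ref{cor:derivation_spatial}, i.e.\ Lemmas \ref{lem:spatial for $X_0$}, \ref{spatial for $X_1$}, \ref{lem_spacital_derivation_2}), and the identity $\mathcal{L}(\mathcal{S})\cap\mathcal{R}(\mathcal{S})=\mathbb{C}I$ to control the ambiguity in $T$. The only difference is that you run the map from $\mathcal{M}(\mathcal{S})$ onto the cocycle space and quotient, whereas the paper defines the inverse map $\overline{D}\mapsto C+\mathfrak{B}(\mathcal{S})$ directly; this is a cosmetic reversal, not a different argument.
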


The following two lemmas are crucial to obtain 
the above results.
\begin{lemma}\label{lem_spacial_derivation_1}
  Let $D$ be a continuous derivation from the Banach algebra $\mathfrak{B}(\mathcal{S})$ into
  $\mathcal{L}(\mathcal{S})$. If 
   $D(X_0)=AX_{0}-X_{0}A$ and $D(X_{1})=AX_{1}-X_{1}A$ for some 
   $A$ in $B(l^{2}(\mathcal{S}))$, then $D(T)=AT-TA$ for each $T$ in $\mathfrak{B}(\mathcal{S})$.
  \begin{proof}
For each $n\geq1$,  if $D(X_{0}^{n})=AX_{0}^{n}-X_{0}^{n}A$, then
 \[D(X_{0}^{n+1})=X_{0}D(X_{0}^{n})+D(X_{0})X_{0}^{n}=AX_{0}^{n+1}-X_{0}^{n+1}A.\]
 Therefore, by induction we have  $D(X_{0}^{n})=AX_{0}^{n}-X_{0}^{n}A$ for any $n\geq1$.
    By the definition of Thompson's semigroup, we have $X_{1}X_{0}^{m}=X_{0}^{m}X_{m+1}$ for any
    $m\geq1$. Then
     \[D(X_{1})X_{0}^{m}+X_{1}D(X_{0}^{m})=D(X_{0}^{m})X_{m+1}+X_{0}^{m}D(X_{m+1}).\]
     By computation, we have $D(X_{m+1})=AX_{m+1}-X_{m+1}A$. Similarly, it can be proved that
     $D(X)=AX-XA$  for any $X$ in $\mathcal{S}$. 
By linearity, we have $D(T)=AT-TA$ for each $T$
 in the semigroup algebra    $\mathbb{C}\mathcal{S}$. Since $\mathbb{C}\mathcal{S}$ is a dense subalgebra of $\mathcal{B}(\mathcal{S})$, we obtain that  $D(T)=AT-TA$ for each $T$ in $\mathfrak{B}(\mathcal{S})$ by the continuity of $D$.
  \end{proof}
  \end{lemma}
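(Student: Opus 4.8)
The plan is to bootstrap the identity $D(Y)=AY-YA$ from the two distinguished generators $X_0,X_1$ up to every generator $X_k$, then to every element of $\mathcal{S}$, and finally to all of $\mathfrak{B}(\mathcal{S})$ by linearity together with a density argument. The essential point is that, although the hypothesis only controls $D$ on $X_0$ and $X_1$, the defining relations $X_jX_i=X_iX_{j+1}$ $(i<j)$ of Thompson's semigroup recover $D$ on all the remaining generators.

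First I would check by induction on $n$ that $D(X_0^{n})=AX_0^{n}-X_0^{n}A$ for every $n\geq 1$: the case $n=1$ is part of the hypothesis, and applying the Leibniz rule to $X_0^{n+1}=X_0\cdot X_0^{n}$ gives $D(X_0^{n+1})=X_0D(X_0^{n})+D(X_0)X_0^{n}$, which closes the induction. Next, iterating $X_1X_0=X_0X_2$, $X_2X_0=X_0X_3$, \dots yields $X_1X_0^{m}=X_0^{m}X_{m+1}$ for every $m\geq 1$. Applying $D$ to both sides of this identity and substituting the expressions already obtained for $D(X_1)$ and $D(X_0^{m})$, the terms of the form $X_1AX_0^{m}$ on the left cancel, leaving $AX_1X_0^{m}-X_1X_0^{m}A$, while the right-hand side becomes $AX_0^{m}X_{m+1}-X_0^{m}AX_{m+1}+X_0^{m}D(X_{m+1})$. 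Using $X_1X_0^{m}=X_0^{m}X_{m+1}$ to match the leading terms and then left-multiplying by $(X_0^{*})^{m}$ — legitimate because $X_0$ is an isometry on $l^{2}(\mathcal{S})$, so $(X_0^{*})^{m}X_0^{m}=I$ — one obtains $D(X_{m+1})=AX_{m+1}-X_{m+1}A$. Hence $D(X_k)=AX_k-X_kA$ for every generator $X_k$.

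With all generators handled, a second induction — on the index $\mathsf{ind}(X)$ of an element $X=X_{i_1}^{\alpha_1}\cdots X_{i_n}^{\alpha_n}$ of $\mathcal{S}$ — propagates the identity to every $X\in\mathcal{S}$: writing $X=X_{i_1}\cdot (X_{i_1}^{\alpha_1-1}\cdots X_{i_n}^{\alpha_n})$ and using that if $D(Y)=AY-YA$ and $D(Z)=AZ-ZA$ then $D(YZ)=YD(Z)+D(Y)Z=AYZ-YZA$, the inductive step is immediate. By linearity $D(T)=AT-TA$ for all $T$ in the semigroup algebra $\mathbb{C}\mathcal{S}$. Since $\mathbb{C}\mathcal{S}$ is dense in $\mathfrak{B}(\mathcal{S})$ by definition, $D$ is continuous by assumption, and $T\mapsto AT-TA$ is continuous, the equality extends to all of $\mathfrak{B}(\mathcal{S})$.

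I do not anticipate a genuine obstacle; the argument is careful bookkeeping once the bootstrapping order is fixed. The two points that do need attention are: (i) that the hypothesis supplies a \emph{single} operator $A$ implementing $D$ on both $X_0$ and $X_1$ — two a priori different operators would not telescope to a clean commutator in the middle step — which is precisely why the earlier normalizations of $D(X_0)$ and $D(X_1)$ must first be reconciled before this lemma can be applied; and (ii) that every cancellation of $X_0^{m}$ is performed on the left, where $X_0^{m}$ is left-invertible, and never on the right, where it is not.
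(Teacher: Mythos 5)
Your proposal is correct and follows essentially the same route as the paper: induction for $D(X_0^n)$, the relation $X_1X_0^m=X_0^mX_{m+1}$ together with left cancellation by $(X_0^*)^m$ to reach the higher generators, then multiplicativity, linearity, and density plus continuity. You merely spell out the computation that the paper compresses into ``By computation,'' so no further comment is needed.
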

  The following lemma is a generalization of the above
  conclusion.
   \begin{lemma}\label{lem_spacital_derivation_2}
 Let $D$ be a continuous derivation from the Banach algebra $\mathfrak{B}(\mathcal{S})$ into
  $\mathcal{L}(\mathcal{S})$. If 
   $D(X_0)=AX_{0}-X_{0}A$ and $D(X_{1})=BX_{1}-X_{1}B$ for some 
   $A$ and $B$ in $\mathcal{L}(\mathcal{S})$, then
there exists an operator $C$ in $\mathcal{L}(\mathcal{S})$ such that  
    $D(T)=CT-TC$ for each $T$ in $\mathfrak{B}(\mathcal{S})$.   
  \begin{proof}
  By the definitions of Thompson's semigroup and derivations, we have
  \[X_{1}X_{0}=X_{0}X_{2},\ X_{2}X_{0}=X_{0}X_{3},\ X_{2}X_{1}=X_{1}X_{3}\]
  and
  \begin{align*}
  D(X_{1})X_{0}+X_{1}D(X_{0})&=D(X_{0})X_{2}+X_{0}D(X_{2}),\\
   D(X_{2})X_{0}+X_{2}D(X_{0})&=D(X_{0})X_{3}+X_{0}D(X_{3}),\\
   D(X_{2})X_{1}+X_{2}D(X_{1})&=D(X_{1})X_{3}+X_{1}D(X_{3}).
  \end{align*}
By the above equations, we have
  \begin{gather*}
  D(X_{2})=X_{0}^{*}X_{1}(A-B)X_{0}+X_{0}^{*}(B-A)X_{1}X_{0}+(AX_{2}-X_{2}A),\\
  D(X_{3})=(X_{0}^{*})^{2}X_{1}(A-B)X_{0}^{2}+(X_{0}^{*})^{2}(B-A)X_{1}X_{0}^{2}+(AX_{3}-X_{3}A),\\
 D(X_{3})=X_{1}^{*}X_{0}^{*}X_{1}(A-B)X_{0}X_{1}+X_{1}^{*}X_{0}^{*}(B-A)X_{1}X_{0}X_{1}
 +X_{1}^{*}(A-B)X_{2}X_{1}\\
+X_{1}^{*}X_{2}(B-A)X_{1}+(BX_{3}-X_{3}B).
 \end{gather*}
 It follows that
 \begin{gather*}
(X_{0}^{*})^{2}X_{1}(A-B)X_{0}^{2}+(X_{0}^{*})^{2}(B-A)X_{1}X_{0}^{2}+(A-B)X_{3}+X_{3}(B-A)=\\
 X_{1}^{*}X_{0}^{*}X_{1}(A-B)X_{0}X_{1}+X_{1}^{*}X_{0}^{*}(B-A)X_{1}X_{0}X_{1}+X_{1}^{*}(A-B)X_{2}X_{1}+X_{1}^{*}X_{2}(B-A)X_{1}.
 \end{gather*}
 Let $A-B=L_{f}\in \mathcal{L}(\mathcal{S})$ and  $L_{g}=L_{f}-f(e)I-\sum_{n\geq0}\sum_{m\geq1}f(X_{n}^{m})X_{n}^{m}$. 
 Then we have
   \begin{equation} \label{equation1 of lem 5.5}
   \begin{split}
   (X_{0}^{*})^{2}X_{1}L_{g}X_{0}^{2}-(X_{0}^{*})^{2}L_{g}X_{1}X_{0}^{2}+L_{g}X_{3} - X_{3}L_{g}=X_{1}^{*}X_{0}^{*}X_{1}L_{g}X_{0}X_{1}-X_{1}^{*}X_{0}^{*}L_{g}X_{1}X_{0}X_{1}\\
   +X_{1}^{*}L_{g}X_{2}X_{1}-X_{1}^{*}X_{2}L_{g}X_{1}+\left(\sum_{n\geq2}\sum_{m\geq1} f(X_{n}^{m})X_{n+1}^{m}\right)X_{3}- X_{3}\left(\sum_{n\geq2}\sum_{m\geq1}f(X_{n}^{m})X_{n+1}^{m}\right)\ \\
\quad\quad+X_{3}\left(\sum_{n\geq2}\sum_{m\geq1}f(X_{n}^{m})X_{n}^{m}\right)-\left(\sum_{n\geq2}\sum_{m\geq1}f(X_{n}^{m})X_{n}^{m}\right)X_{3}\quad\quad\quad\quad\quad\quad\quad.
   \end{split}
   \end{equation}
If $g\neq0$, then take the minimal element $X$ of the set $\{X\in \mathcal{S}\ |\ g(X)\neq0\}$ under the total order.
 By the definition of $L_{g}$, we have 
$X=X_{i_{1}}^{\alpha_{i_{1}}}\cdots X_{i_{t}}^{\alpha_{i_{t}}}$, where $i_{1}<i_{2}<\cdots<i_{t},\ t\geq2$, and $\alpha_{i_{1}},\cdots,\alpha_{i_{t}}\geq1$.

\textbf{Case I:} $i_{1}\geq3$. We have $X_{3}X\prec XX_{3}$.
Taking $\langle\cdot\ \delta_{e},\delta_{X_{3}X}\rangle$ at the both sides of equation (\ref{equation1 of lem 5.5}), we obtain that $g(X)=0$. 

\textbf{Case II:}  $i_{1}=1$ or $2$. We have $ XX_{3}\prec X_{3}X$. Analogously, taking $\langle\cdot\ \delta_{e},\delta_{XX_{3}}\rangle$ at the both sides of equation (\ref{equation1 of lem 5.5}), we also have $g(X)=0$.

\textbf{Case III:} $i_{1}=0$. If $X_1|XX_{2}X_{1}$, then taking
$\langle\cdot\ \delta_{e},\delta_{X_{1}^{-1}XX_{2}X_{1}}\rangle$, we can obtain that $g(X)=0$. If $X_1\nmid XX_{2}X_{1}$, by the normal form of Thompson's group $\mathcal{F}$ there exist $Y$ and $X_{k}$ in $\mathcal{S}$ such that $X_{1}^{-1}XX_{2}X_{1}=YX_{k}^{-1}$, where $X_{1}^{-1}$ and $X_{k}^{-1}$ are in $\mathcal{F}$.
Then taking $\langle\cdot\ \delta_{X_{k}},\delta_{Y}\rangle$, we can also obtain that $g(X)=0$.

It follows from the above discussion that $g=0$. This leads to 
\begin{align*}
 \left(\sum_{n\geq2}\sum_{m\geq1}f(X_{n}^{m})X_{n+1}^{m}\right)X_{3}-&X_{3}\left(\sum_{n\geq2}\sum_{m\geq1}f(X_{n}^{m})X_{n+1}^{m}\right)\\
 +&X_{3}\left(\sum_{n\geq2}\sum_{m\geq1}f(X_{n}^{m})X_{n}^{m}\right)-\left(\sum_{n\geq2}\sum_{m\geq1}f(X_{n}^{m})X_{n}^{m}\right)X_{3}\\
 =&0.
\end{align*}
 It is not difficult to verify that $f(X_2^m)=0$ 
and $f(X_n^m)=f(X_3^m)$ for all $n \geq 4$ and $m\geq1$. Since $f\in l^2(\mathcal{S})$, we have
  $f(X_{n}^{m})=0$ for any $n\geq2$ and $m\geq1$. Therefore, $A-B=\sum_{m=1}^{\infty}f(X_{0}^{m})X_{0}^{m}+\sum_{m=1}^{\infty}f(X_{1}^{m})X_{1}^{m}$+$f(e)I$.
 Let 
 \[C=A-\sum_{m=1}^{\infty}f(X_{0}^{m})X_{0}^{m}=B+\sum_{m=1}^{\infty}f(X_{1}^{m})X_{1}^{m}+f(e)I.\] 
  Then by Theorem \ref{thm_conditional expectation} we have  $C\in\mathcal{L}(\mathcal{S})$  and
 \[D(X_{0})=CX_{0}-X_{0}C,\quad D(X_{1})=CX_{1}-X_{1}C.\]
 By Lemma \ref{lem_spacial_derivation_1}, we have $D(T)=CT-TC$ for each $T$ in $\mathfrak{B}(\mathcal{S})$.   
  \end{proof}
  \end{lemma}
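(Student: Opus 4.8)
The plan is to prove that every derivation $D\colon\mathfrak{B}(\mathcal{S})\to\mathcal{L}(\mathcal{S})$ is inner, that is, of the form $D(T)=CT-TC$ for a single $C\in\mathcal{L}(\mathcal{S})$; this identifies $\mathcal{Z}^1(\mathfrak{B}(\mathcal{S}),\mathcal{L}(\mathcal{S}))$ with $\mathcal{B}^1(\mathfrak{B}(\mathcal{S}),\mathcal{L}(\mathcal{S}))$ and hence yields $H^1(\mathfrak{B}(\mathcal{S}),\mathcal{L}(\mathcal{S}))=0$. First I would apply Proposition \ref{prop: continuity_derivations} so as to assume $D$ is continuous. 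The key structural point is that, although $X_0$ and $X_1$ do not generate $\mathfrak{B}(\mathcal{S})$ as an algebra (the passage to $X_{m+1}$ involves an adjoint), a continuous $D$ is nevertheless completely determined by the two values $D(X_0)$ and $D(X_1)$: the semigroup relation $X_1X_0^m=X_0^mX_{m+1}$ together with the Leibniz rule expresses $X_0^mD(X_{m+1})$ in terms of $D(X_0)$, $D(X_1)$ and $D(X_0^m)$, and left-multiplying by $(X_0^*)^m$ (legitimate since $X_0^*X_0=I$) recovers $D(X_{m+1})$. So it suffices to realise $D(X_0)$ and $D(X_1)$ as commutators by one common operator in $\mathcal{L}(\mathcal{S})$ and then to propagate.

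The second step produces an implementing operator for each generator separately. Here I would use the averaging operators $A$ and $B$ of \eqref{averaging for $X_0$} and \eqref{averaging for $X_1$}, obtained by integrating $(X_0^*)^nD(X_0^{n+1})$ (respectively $(X_1^*)^nD(X_1^{n+1})$) against the invariant mean $E_\omega$ attached to a fixed free ultrafilter $\omega$. These are bounded since $\|(X_0^*)^nD(X_0^{n+1})\|\le\|D\|$, and a telescoping computation with the Leibniz rule yields $D(X_0)=A-X_0^*AX_0$ and $D(X_1)=B-X_1^*BX_1$. Lemma \ref{lem:spatial for $X_0$} and Lemma \ref{spatial for $X_1$} then upgrade these to honest inner forms $D(X_0)=\widehat{A}X_0-X_0\widehat{A}$ and $D(X_1)=\widehat{B}X_1-X_1\widehat{B}$ with $\widehat{A},\widehat{B}\in\mathcal{L}(\mathcal{S})$; the mechanism is that the commutant identity $\mathcal{R}(\mathcal{S})'=\mathcal{L}(\mathcal{S})$ forces the averaged operators to be left convolution operators, while order estimates of the type $X_1X\prec XX_2$ for $X_0\nmid X$ eliminate the off-diagonal terms not divisible by the relevant generator.

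The genuine difficulty is that these two constructions deliver a priori distinct operators $\widehat{A}\neq\widehat{B}$, so neither on its own implements $D$ simultaneously on $X_0$ and $X_1$. Reconciling them is precisely the content of Lemma \ref{lem_spacital_derivation_2}, which I would invoke with $A=\widehat{A}$ and $B=\widehat{B}$. The work lives in that lemma: writing $\widehat{A}-\widehat{B}=L_f$, one expresses $D(X_2)$ and $D(X_3)$ in two ways via the relations $X_1X_0=X_0X_2$, $X_2X_0=X_0X_3$, $X_2X_1=X_1X_3$, and then compares leading coefficients under the total order of Definition \ref{def:Total_order_on_ Thompson's_semigroup}, using the minimal-element property from Lemma \ref{lem_property of total order of Thompson's semigroup}, to force $f$ to be supported only on powers of $X_0$ and $X_1$; the $X_0$-part is absorbed into $\widehat{A}$ and the $X_1$-part into $\widehat{B}$, and Theorem \ref{thm_conditional expectation} certifies that the resulting common operator $C$ still lies in the non-self-adjoint algebra $\mathcal{L}(\mathcal{S})$. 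Once $D(X_0)=CX_0-X_0C$ and $D(X_1)=CX_1-X_1C$ hold for a single $C$, Lemma \ref{lem_spacial_derivation_1} propagates $D(T)=CT-TC$ to every $X_n$ and, by linearity and continuity, to all of $\mathfrak{B}(\mathcal{S})$. Thus $D$ is inner and $H^1(\mathfrak{B}(\mathcal{S}),\mathcal{L}(\mathcal{S}))=0$. I expect the reconciliation — pinning down the support of $f$ through the order and keeping $C$ inside $\mathcal{L}(\mathcal{S})$ via the conditional expectation — to be the main obstacle, whereas the averaging and the final propagation are essentially routine.
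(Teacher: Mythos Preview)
Your proposal is aimed at the wrong target: you have outlined a proof of Theorem~\ref{thm_the first cohomology is zero} (that $H^1(\mathfrak{B}(\mathcal{S}),\mathcal{L}(\mathcal{S}))=0$), not of Lemma~\ref{lem_spacital_derivation_2}. In particular, you \emph{invoke} Lemma~\ref{lem_spacital_derivation_2} in your third paragraph rather than proving it. The hypotheses of the lemma already hand you a continuous $D$ together with $A,B\in\mathcal{L}(\mathcal{S})$ satisfying $D(X_0)=AX_0-X_0A$ and $D(X_1)=BX_1-X_1B$; therefore your first two paragraphs --- invoking Proposition~\ref{prop: continuity_derivations} for continuity, constructing the averaged operators via \eqref{averaging for $X_0$} and \eqref{averaging for $X_1$}, and appealing to Lemmas~\ref{lem:spatial for $X_0$} and~\ref{spatial for $X_1$} --- are entirely superfluous here (they belong to the proof of the theorem, not the lemma).

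That said, the one-paragraph sketch you give of what happens ``inside'' Lemma~\ref{lem_spacital_derivation_2} is correct and matches the paper's argument: set $A-B=L_f$, compute $D(X_2)$ and $D(X_3)$ in two ways from the relations $X_1X_0=X_0X_2$, $X_2X_0=X_0X_3$, $X_2X_1=X_1X_3$, compare minimal elements under the total order of Definition~\ref{def:Total_order_on_ Thompson's_semigroup} to force $f$ to be supported on $\{X_0^m,X_1^m\}$, define $C=A-\sum_{m\ge1}f(X_0^m)X_0^m=B+\sum_{m\ge1}f(X_1^m)X_1^m+f(e)I$, use Theorem~\ref{thm_conditional expectation} to keep $C\in\mathcal{L}(\mathcal{S})$, and finish with Lemma~\ref{lem_spacial_derivation_1}. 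Had you written \emph{that} as your proposal, it would have been essentially the paper's proof; as submitted, the proposal is circular for the stated lemma.
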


 \begin{proof}[Proof of Theorem \ref{thm_the first cohomology is zero}]
 Let $D$ be a continuous derivation from $\mathfrak{B}(\mathcal{S})$ into $\mathcal{L}(\mathcal{S})$.
  By  Lemmas \ref{lem:spatial for $X_0$},
\ref{spatial for $X_1$} and
   \ref{lem_spacital_derivation_2}, there exits a $C$ in $\mathcal{L}(S)$ such that $D(T)=CT-TC$ for each $T$ in $\mathfrak{B}(\mathcal{S})$. Thus $D$ is an inner
   derivation and  $H^1(\mathfrak{B}(\mathcal{S}),\mathcal{L}(\mathcal{S}))=0$. We complete the proof.
 \end{proof}

  \begin{proof}[Proof of Theorem 
  \ref{thm_first_co_B(S)}]
  Let $D$ be a derivation on the Banach algebra
  $\mathfrak{B}(\mathcal{S})$, then $D$ is continuous.
   By Lemmas \ref{lem:spatial for $X_0$},
\ref{spatial for $X_1$} and
   \ref{lem_spacital_derivation_2}, there exists an operator $C$ in $\mathcal{L}(\mathcal{S})$ such that
   $D(T)=CT-TC$ for each $T$ in $\mathfrak{B}(\mathcal{S})$. 
  This induces the following map:
 \begin{align*}
\pi:\quad H^{1}(\mathfrak{B}(\mathcal{S}),\mathfrak{B}(\mathcal{S}))\quad&\longrightarrow\qquad \frac{ \mathcal{M}(\mathcal{S})}{\mathfrak{B}(\mathcal{S})}\\
\overline{D}\qquad\qquad&\longrightarrow\quad\ C+\mathfrak{B}(\mathcal{S}).
 \end{align*}
 We shall show that $\pi$ is well-defined. If there exist two operators $C_1$ and $C_2$ in $\mathcal{L}(\mathcal{S})$ such that $D(T)=C_1T-TC_1=C_2T-TC_2$ for each $T$ in $\mathfrak{B}(\mathcal{S})$, then $C_1-C_2\in \mathcal{L}(\mathcal{S}) \cap \mathfrak{B}(\mathcal{S})'=\mathcal{L}(\mathcal{S}) \cap \mathcal{R}(\mathcal{S})=\mathbb{C}I$. Thus $C_1+\mathfrak{B}(\mathcal{S})=C_2+\mathfrak{B}(\mathcal{S})$. Now if $\overline{D_1}=\overline{D_2}$, then $D_1-D_2$ is an inner derivation of $\mathfrak{B}(\mathcal{S})$. There exists an operator $C_3$ in $\mathfrak{B}(\mathcal{S})$ such that
   $(D_1-D_2)(T)=C_3T-TC_3$ for each $T$ in $\mathfrak{B}(\mathcal{S})$. 
  Assume that $D_1(T)=C_1'T-TC_1'$ and $D_2(T)=C_2'T-TC_2'$, where $C_1'$ and $C_2'$
  are in $\mathcal{L}(\mathcal{S})$,
   then $(C_1'-C_2')T-T(C_1'-C_2')=C_3T-TC_3$. It follows that $C_1'-C_2'-C_3$
  belongs to  $\mathcal{L}(\mathcal{S}) \cap \mathfrak{B}(\mathcal{S})'=\mathbb{C}I$.
Thus  $C_1'-C_2'$
  belongs to  $ \mathfrak{B}(\mathcal{S})$, that is
  $C_1'+\mathfrak{B}(\mathcal{S})=C_2'+\mathfrak{B}(\mathcal{S})$. The map $\pi$ is a well-defined and is a group homomorphism.
 If $\pi(\overline{D})=0$, then there exists an operator $C'$ in $\mathfrak{B}(\mathcal{S})$ such that $D(T)=C'T-TC'$, which means that $D$ is an inner derivation.
The map $\pi$ is injective. The surjectivity of $\pi$ is obvious. It follows from the above discussion that $\pi$ is a group isomorphism. We complete the proof.
  \end{proof}
    \section{Further discussions}\label{sec_Further discussions}
     We now recall the definition of higher order continuous  Hochschild cohomology for Banach algebras.
     Let $\mathcal{M}$ be a Banach algebra and $\mathcal{X}$ be a Banach $\mathcal{M}$-bimodule. 
     The space of all $n$-linear
(continuous) maps from $n$-fold Cartesian product  $\mathcal{M}^n=\mathcal{M}\times\cdots\times\mathcal{M}$ into $\mathcal{X}$
is denoted by  $L^{n}(\mathcal{M},\mathcal{X})$ for $n\geq1$,
while $L^{0}(\mathcal{M},\mathcal{X})$ is defined to be $\mathcal{X}$.

The coboundary operator $\partial^n$ : $L^{n}(\mathcal{M},\mathcal{X})\rightarrow L^{n+1}(\mathcal{M},\mathcal{X})$ is defined,  
for $n\geq1$, by
\begin{align*}
\partial^n\phi(a_{1},a_{2},\ldots,a_{n+1})=&a_{1}\phi(a_{2},\ldots,a_{n+1})\\
+&\sum_{i=1}^{n}(-1)^{i}\phi(a_1,\ldots,a_{i-1},a_ia_{i+1},a_{i+2},\ldots,a_{n+1})\\
+&(-1)^{n+1}\phi(a_{1},\ldots,a_{n})a_{n+1}
\end{align*}
where $\phi\in L^{n}(\mathcal{M},\mathcal{X})$ and $a_1,\ a_2,\ldots,\ a_{n+1}\in\mathcal{M}$. 
When $n=0$, we define $\partial^0$ by
\[\partial^0 x(m)=mx-xm\quad\quad\quad (x\in\mathcal{X},\ m\in\mathcal{M}).\]
It is routine to check that
 $\partial^{n}\partial^{n-1}$ : $L^{n-1}(\mathcal{M},\mathcal{X})$
$\rightarrow$ $L^{n+1}(\mathcal{M},\mathcal{X})$
is zero for all $n\geq1$, and so $Im(\partial^{n-1}:\ L^{n-1}(\mathcal{M},\mathcal{X})\rightarrow L^{n}(\mathcal{M},\mathcal{X}))$ is a linear subspace of
$Ker(\partial^n:\ L^{n}(\mathcal{M},\mathcal{X})\rightarrow L^{n+1}(\mathcal{M},\mathcal{X}))$.
 The $n^{th}$ Hochschild cohomology group $H^{n}(\mathcal{M},\mathcal{X})$ is then
defined to be the following quotient space
\[\frac{Ker(\partial^n:\ L^{n}(\mathcal{M},\mathcal{X})\rightarrow L^{n+1}(\mathcal{M},\mathcal{X}))}{Im(\partial^{n-1}:\ L^{n-1}(\mathcal{M},\mathcal{X})\rightarrow L^{n}(\mathcal{M},\mathcal{X}))}\]
   for $n\geq1$. We end this paper by proposing some problems for future study:
\begin{itemize}
\item  What are the higher order cohomology groups 
$H^{n}(\mathfrak{B}(S),\mathfrak{B}(S))$ for $n\geq2$?
\item  When $n\geq2$, does  $H^n(\mathfrak{B}(\mathcal{S}),\mathcal{L}(\mathcal{S}))=0$? 
The first step to calculate the high order cohomology groups should be the following. Given a 2-cocyle 
$\phi$,
we need to modify it by a 1-coboundary such that
$\phi$ is $X_0$-multimodular, i.e., $\phi(X_0A,B)= X_0\phi(A,B)$, $\phi(AX_0,B)= \phi(A,X_0B)$, and $\phi(A,BX_0)= \phi(A,B)X_0$ for all $A,B\in \mathfrak{B}(\mathcal{S})$.
\end{itemize}  


\newpage
\bibliographystyle{plain}

\end{document}